\newcommand{\nc}{\newcommand}
\nc{\nen}{\newenvironment}
\let\goth\mathfrak
\def\gg{\goth g}
\def\gs{\goth s}
\def\gh{\goth h}
\def\gl{\goth l}
\def\gp{\goth p}
\def\gn{\goth n}
\def\gq{\goth q}
\def\gg{\goth g}
\def\gs{\goth s}
\def\gh{\goth h}
\def\gl{\goth l}
\def\gp{\goth p}
\def\gn{\goth n}
\def\gq{\goth q}
\nc{\fg}{\mathfrak g}
\nc{\fn}{\mathfrak n}
\nc{\fm}{\mathfrak m}
\nc{\fp}{\mathfrak p}
\nc{\fh}{\mathfrak h}
\nc{\ft}{\mathfrak t}
\nc{\fk}{\mathfrak k}
\nc{\fb}{\mathfrak b}
\nc{\fs}{\mathfrak s}
\nc{\fB}{\mathfrak B}
\def\ggl{\goth{gl}}
\def\ol{\overline}
\def\Ind{\mbox{Ind}}
\def\ad{\mbox{ad}}
\def\Id{\mbox{Id}}
\def\cD{\mathcal D}
\font\teneufm=eufm10
\font\seveneufm=eufm7
\font\fiveeufm=eufm5
\def\Z{\mathbb Z}
\def\C{\mathbb C}
\def\Z{\mathbb Z}
\nc{\vareps}{\varepsilon}
\nc{\sneg}{\operatorname{neg}}
\nc{\Thm}[1]{Theorem~\ref{#1}}
\nc{\Prop}[1]{Proposition~\ref{#1}}
\nc{\Lem}[1]{Lemma~\ref{#1}}
\nc{\Cor}[1]{Corollary~\ref{#1}}
\nc{\Conj}[1]{Conjecture~\ref{#1}}
\nc{\Claim}[1]{Claim~\ref{#1}}
\nc{\Defn}[1]{Definition~\ref{#1}}
\nc{\Exa}[1]{Example~\ref{#1}}
\nc{\Rem}[1]{Remark~\ref{#1}}
\nc{\Note}[1]{Note~\ref{#1}}
\nc{\Quest}[1]{Question~\ref{#1}}
\nc{\Hyp}[1]{Hypoth\`ese~\ref{#1}}
\nc{\htt}{\operatorname{ht}}
\nc{\Ker}{\operatorname{Ker}}
\nc{\rker}{\operatorname{rKer}}
\nc{\im}{\operatorname{Im}}
\nc{\osp}{\mathfrak{osp}}
\nc{\sgn}{\operatorname{sgn}}
\nc{\F}{\operatorname{F}}
\nc{\Mod}{\operatorname{Mod}}
\nc{\Mat}{\operatorname{Mat}}
\nc{\Soc}{\operatorname{Soc}}
\nc{\sn}{\operatorname{sn}}
\nc{\Hom}{\operatorname{Hom}}
\nc{\End}{\operatorname{End}}
\nc{\supp}{\operatorname{supp}}
\nc{\Card}{\operatorname{Card}}
\nc{\Ann}{\operatorname{Ann}}
\nc{\Coind}{\operatorname{Coind}}
\nc{\wt}{\operatorname{wt}}
\nc{\ch}{\operatorname{ch}}
\nc{\Stab}{\operatorname{Stab}}
\nc{\Sch}{{\mathcal S}\mbox{\em ch}}
\nc{\Irr}{\operatorname{Irr}}
\nc{\Spec}{\operatorname{Spec}}
\nc{\Prim}{\operatorname{Prim}}
\nc{\Aut}{\operatorname{Aut}}
\nc{\Ext}{\operatorname{Ext}}
\nc{\Cl}{\operatorname{Cl}}
\begin{document}
\title{Bounded highest weight modules over $\gq (n)$}

\author[Maria Gorelik and Dimitar Grantcharov]{Maria Gorelik$^1$ and Dimitar Grantcharov$^2$}

\address{Department of Mathematics\\
 Weizmann Institute of Science\\
Rehovot 76100, Israel}

         \email{maria.gorelik@weizmann.ac.il}

\address{Department of Mathematics \\
         University of Texas at Arlington \\ Arlington, TX 76021, USA}

         \email{grandim@uta.edu}

\thanks{$^{1}$
Partially supported by the Minerva foundation with funding from the Federal German Ministry for Education and Research.\\
$^{2}$ This research was supported by NSA grant H98230-10-1-0207.}
\date{}

\maketitle

\begin{abstract}
A classification of the simple highest weight bounded $\gq(n)$-module is obtained. To achieve this classification we introduce a new combinatorial tool - the star action. Our result   leads in particular to a classification of all
simple weight $\gq(n)$-modules with finite dimensional weight spaces. \end{abstract}

\section*{Introduction}

It was known since the inception of the Lie superalgebras theory that some Lie superalgebra series require special consideration.  One of these series is especially interesting  due to its resemblance to the general linear Lie algebra $\gg \gl_n$ on the one hand, and because of the unique properties of its
structure and representations on the other. These are the so-called
{\it queer} (or {\it strange}) Lie superalgebras $\gq (n)$, introduced by V. Kac in~\cite{K}. The
queer nature of $\gq(n)$ is partly due to the nonabelian structure
of its Cartan subsuperalgebra $\gh$ which has  a nontrivial odd part
$\gh_{\bar{1}}$.  Because $\gh_{\bar{1}} \neq 0$, the
study of  highest weight modules of $\gq(n)$ requires  nonstandard  technique, including Clifford algebra methods. The latter is necessary due to the fact that the highest
weight space of an irreducible highest weight
$\gq(n)$-module $L(\lambda)$ has a Clifford module structure.

The representation theory of finite dimensional  $L(\lambda)$ is well developed. In \cite{Se}
A. Sergeev established several important results, including a  character formula of
$L(\lambda)$ for the so called tensor modules, i.e. submodules of
tensor powers $(\C^{n|n})^{\otimes r}$ of the natural $\gq(n)$-module $\C^{n|n}
$. The characters of all simple
finite-dimensional $\gq(n)$-modules have been found by I. Penkov and
V. Serganova in 1996 (see \cite{PS1} and \cite{PS2}) via an algorithm
using a supergeometric version of the Borel-Weil-Bott Theorem. On the other hand the character formula problem for infinite dimensional $L(\lambda)$ remains largely open.  In 2004 J. Brundan, \cite{Br},
reproved the character formula of Penkov-Serganova using a different approach and
formulated a conjecture for the characters of all $L(\lambda)$.

Important results about the simplicity of the highest weight $\gq(n)$-modules were obtained  in \cite{Go}. An equivalence of categories of strongly typical $\gq(n)$-modules and categories of $\gg \gl_n$-modules were established recently in \cite{FM}.

In this paper we  study highest weight $\gq (n)$-modules that are  {\it bounded}, i.e.  with uniformly bounded sets of weight multiplicities. The original motivation of this paper is to complete the classification of the simple weight $\gq(n)$-modules with finite weight multiplicities, i.e. those that equal the direct sum of their weight spaces, and whose weight spaces are finite dimensional. In the case of simple finite dimensional Lie algebras, by a theorem of Fernando-Futorny, \cite{Fe}, \cite{Fu}, every simple weight module is obtained by a parabolic induction from a cuspidal module, i.e. a module on which all root vectors act bijectively. The classification of simple cuspidal modules of finite dimensional simple Lie algebras was established by O. Mathieu in \cite{M}. For simple finite dimensional Lie superalgebras, a parabolic induction theorem was proved by I. Dimitrov, O. Mathieu, and I. Penkov in \cite{DMP}, where a partial classification of the simple cuspidal modules is obtained as well. Among the most interesting  cases not included in the latter classification is the case of $\gq (n)$ (or, equivalently, of the simple queer Lie superalgebra $\gp \gs \gq (n)$). Lastly, the simple cuspidal $\gq(n)$-modules are parameterized by bounded highest weight modules using localization technique, see \S \ref{sec-cusp} or \cite{Gr}.

 Another motivation to study bounded modules is that these modules come in families, such that the modules within one family can be linked one to another using a sequence of  localizations. Due to this linkage,  the modules in one family share important structural properties. In particular,  knowing the $\gg \gl_n$-decomposition of one module within a single family is sufficient to find the $\gg \gl_n$-decomposition of all remaining $\gq(n)$-modules in the family. We expect that the family consideration leading to similar $\gg \gl_n$-decompositions can be extended beyond the category of bounded modules. In addition to their localization description, the families have nice geometric realizations in terms of $\mathcal D$-modules. This was noticed in the case of $\gg \gl_n$ in \cite{M} and is partly explored  in the case of $\gq(n)$ in this paper.

 The main tool we use for the classification of the highest weight  bounded $\gq (n)$-modules is an analog of the dot action of the Weyl group, called in the paper the {\it star action}. The star action
 is a mixture of the regular action and the dot action depending on the atypicality of the weight. More precisely, for a simple root $\alpha$ we set
$$s_{\alpha}*\lambda=\left\{
\begin{array}{ll}
s_{\alpha}\lambda &\text{ if } \lambda (h_{\ol{\alpha}}) \not=0,\\
s_{\alpha}\cdot\lambda  &\text{ if } \lambda (h_{\ol{\alpha}}) =0,
\end{array}
\right.$$
where $s_{\alpha}\cdot\lambda:=s_{\alpha}(\lambda+\rho_0)-\rho_0$ is the standard twisted action.
This new action involves a group $\widetilde{W}$ of Coxeter type.

Recall that a $\ggl_n$-module
$\dot{L}(\lambda)$ is finite-dimensional if and only if for each simple root $\alpha$ one has $s_{\alpha}\cdot\lambda<\lambda$
 (with respect to the standard partial, see~\S~\ref{gl-roots}); by contrast,
the $\gq(n)$-module $L(\lambda)$ is finite-dimensional if and only if for each simple root $\alpha$ one has $s_{\alpha}*\lambda<\lambda$.
For an integral weight $\mu$ the simple $\ggl_n$-module $\dot{L}(\mu)$ is bounded  if and only if
the following conditions hold

(i) there exists a unique increasing ``$W$-string'' (see~\S\ref{dotiii}) $\mu=\mu_0<\mu_1<\mu_2<\ldots<\mu_s$;

(ii) the set $\{i: s_i\cdot\mu_j=\mu_j\}$ is empty for  $j<s$ and has cardinality at most one for $j=s$.

We  show that the same description for bounded weights is valid for $\gq(n)$ if we change the dot action by the $*$-action.

Our main result in terms of the star action states, roughly, that every $\gq(n)$-bounded weight $\lambda$ (i.e., for which $L(\lambda)$ is bounded) can be obtained by applying the star action of  the product $s_i...s_j$ on a ``maximal'' weight, where $s_i$ is the Weyl reflection corresponding to the $i$th simple root, and the product equals $s_i s_{i+1}...s_j$ for $i\leq j$ and  $s_i s_{i-1}...s_j$, otherwise.  The choice of a maximal weight is similar to the one in the case of $\gg \gl_n$ and depends on the type of the family (regular integral, singular, or nonintegral). In view of this, our result can be considered as the queer analog of the description of the $\gg \gl_n$-bounded weights. Combining the star action and the dot action, and using localizaton technique, one can obtain $\gg \gl_n$-decomposition factors of a bounded module $L(\lambda)$. For some special $\lambda$, in \S  \ref{sec_gl_n}, we obtain all  $\gg \gl_n$-decomposition factors.

It is worth noting that the paper, with the exception of \S \ref{sec_gl_n}, is self contained, with most of the results proved  or partially proved  both for  $\gg \gl_n$ and $\gq(n)$. The description of the bounded weights of $\gq (n)$ requires a careful analysis of the orbits of the action of the group $\widetilde{W}$ - something which is less challenging in the case of $\gg \gl_n.$

The organization of the paper is as follows. In Section 2  we include and prove important results for the classification of the $\gg \gl_n$-bounded weights in terms of the dot action of the Weyl group. Section 3 is devoted to preliminaries on the localization functor and some important $\gq (3)$-considerations. In Sections 4 and 5 we introduce the star action of the group $\widetilde{W}$ and  prove our main classification result. In the next section we study the $\gg \gl_n$-structure of bounded $\gq(n)$-modules. The $\gg \gl_n$-decomposition factors of the bounded $\gq(n)$-modules in some particular families are found in Section 7. In Section 8 we use our main result to complete the classification of all simple cuspidal (and hence of all simple weight) $\gq(n)$-modules.

\medskip
Both authors would like to express their gratitude for the excellent working conditions provided by Max Planck Institute for Mathematics, Bonn, and by Mathematisches Forschungsinstitut Oberwolfach where most of this research was performed. We would like to thank V. Mazorchuk and V. Serganova for the fruitful discussions. We are also grateful to the referee for the numerous helpful suggestions.

\section{Preliminaries}
Our ground field is $\mathbb C$.
\subsection{Notation for $\gg \gl_n$}\label{gl-roots}
We choose the natural triangular decomposition of $\ggl_n$ (the Cartan subalgebra $\gh_{\bar{0}}$ which consists of the diagonal matrices; 
and nilpotent subalgebras $\gn^{\pm}_{\bar{0}}$ which consists of strictly upper (resp., lower) triangular matrices).
We denote by $\Delta:= \Delta (\ggl_n, \gh_{\bar{0}})$ the corresponding root system. We fix a basis $\{ \varepsilon_1,...,\varepsilon_n\}$ of $\gh_{\bar{0}}$ such that
$\Delta = \{ \varepsilon_i -  \varepsilon_j \; | \; i \neq j \}$; then $\Pi=\{ \varepsilon_i -  \varepsilon_{i+1} \; | \; i =1,...,n-1 \}$ is the set of simple roots.
 For every root $\alpha \in \Delta$ we fix a standard $\gs \gl_2$-triple $(e_{\alpha}, f_{\alpha}, h_{\alpha})$ such that $e_{\alpha}$ is in the $\alpha$-root space  of $\ggl_n$, and $f_{\alpha} := e_{-\alpha}$.  We denote $\varepsilon_i -  \varepsilon_{i+1} $ by $\alpha_{i}$. Set $e_i := e_{\alpha_i}$ and $f_i := f_{\alpha_i}$.

We set $Q^+:=\sum_{\alpha\in\Pi}\mathbb{Z}_{\geq 0}\alpha$
 and introduce the standard partial order on $\gh_{\bar{0}}^*$: $\mu\leq \nu$ if $\nu-\mu\in Q^+$.

We denote the standard bilinear form on $\gh^*_{\bar{0}}$ by $(\cdot, \cdot)$. A weight
$\lambda_1 \vareps_1 + ...+ \lambda_n \vareps_n\in\gh^*_{\bar{0}}$ will be often denoted by $(\lambda_1,...,\lambda_n)$ for convenience.

We denote the Weyl group of $\ggl_n$ by $W$. A reflection in $W$ corresponding to a root $\alpha$ will be denoted by $s_{\alpha}$.  Set $s_i:=s_{\alpha_i}$. For $1 \leq i,k\leq n-1$ we will use the following convention.

$$\prod_{j=i}^k s_j:=\left\{\begin{array}{ll}
s_is_{i+1}\ldots s_k &\ \text{ if } k\geq i,\\
s_is_{i-1}\ldots s_k & \ \text{ if } k<i.
\end{array}
\right.$$

The length of an element $w$ of $W$, i.e. the number of simple reflections in a reduced expression of $w$, will be denoted by $l (w)$. By $\rho$ we denote the half sum of the positive roots. For $w \in W$ and $\lambda \in \gh_{\bar{0}}^*$, we set $w \cdot \lambda = w(\lambda + \rho) - \rho$.

 For a $\ggl_n$--module $M$ we say that a root element $e_{\alpha}$ acts {\it injectively} on $M$
if  $e_{\alpha} m \neq 0$ for every non-zero $m \in M$;
we say that $e_{\alpha}$ acts {\it locally finitely} on $M$ if for every $m \in M$
there is a positive integer $N$ such that $e_{\alpha}^N m = 0$.

\subsubsection{}\begin{defn}{} \label{def-weight} Let $M$ be a  $\ggl_n$-module.

(i) For $\lambda \in \gh_{\bar{0}}^*$, the space $M^{\lambda} = \{ m \in M \; | \; hm = \lambda(h)m, \mbox{ for every } h \in \gh_{\bar{0}}^*\}$ is called {\em the $\lambda$-weight space of $M$}.

(ii) We call $M$ a {\em weight module} if $M = \bigoplus_{\lambda \in \gh_{\bar{0}}^*} M^{\lambda}$ and $\dim M^{\lambda} < \infty$ for every $\lambda \in  \gh_{\bar{0}}^*$.

(iii) We call $M$ a {\em bounded module} if $M$ is a weight module and there is a constant $C$ such that $\dim M^{\lambda} < C$ for every $\lambda \in  \gh_{\bar{0}}^*$. We call $\max_{\lambda \in \gh_{\bar{0}}^*} \dim M^{\lambda}$  {\em the degree of $M$}.

(iv)  We call $M$ a {\em cuspidal module} (or, {\em torsion free}) if  $M$ is a weight module and  $e_{\alpha}$ acts injectively on $M$ for every $\alpha \in \Delta$.
\end{defn}

%\subsubsection{}
\begin{rem}{}
(i) The condition ``$\dim M^{\lambda} < \infty$'' in (ii) is added for convenience. We note that many authors consider weight modules with possibly infinite weight multiplicities.

(ii) By  Definition \ref{def-weight} (iv), it is clear that every finitely generated  cuspidal module is bounded.

(iii) Every finitely generated bounded module has finite length by Lemma 3.3 in  \cite{M}.
\end{rem}

\subsubsection{} \label{gl-n-not}
 Denote by  $\dot{M}(\lambda)$ and $\dot{L}(\lambda)$  the  Verma $\ggl_n$--module of weight $\lambda$
and its simple quotient, respectively.  We call $\lambda\in\gh^*_{\bar{0}}$ a {\it $\ggl_n$--bounded weight},
if $\dot{L}(\lambda)$ is bounded.

A weight $\ggl_n$--module $M$ is said to {\it have a shadow} if for every root $\alpha$,
$e_{\alpha}$ acts either injectively or locally finitely on $M$.
For every module $M$ and a root $\alpha$, the elements $m$
on which $e_{\alpha}$ acts locally finitely form a submodule of $M$.
In particular, every simple $\ggl (n)$--module has a shadow. If $M$ has a shadow
denote by $\Delta^{\rm{inj}}M$ (respectively, $\Delta^{\rm{fin}}M$) all roots $\alpha$
such that $e_{\alpha}$ acts injectively (respectively, locally finitely) on $M$.
The pair $( \Delta^{\rm{inj}}M, \Delta^{\rm{fin}}M)$ will be called the {\it shadow} of $M$.
The shadow of simple bounded highest weight modules can be easily described using sets
of roots of cominiscule parabolic subalgebras of $\ggl_n$. For every $k$, $1 \leq k \leq n-1$,
let $\gp_k = \gs_k \oplus \gn_k$ be the parabolic subalgebra of $\ggl_n$ with Levi part
$\gs_k$ and nilpotent radical $\gn_k$ whose root systems are given by
\begin{eqnarray*}
\Delta_{\gn_k}&=& \{ \varepsilon_i - \varepsilon_j \; | \; i \leq k <j \} \\
\Delta_{\gs_k}&=& \{ \varepsilon_i - \varepsilon_j \; | \; i, j \leq k \mbox{ or } i,j>k\} \setminus \{ 0\}.
\end{eqnarray*}
Then the  radical $\gn_k^{-}$ of the opposite parabolic subalgebra will have root system
$\Delta_{\gn^{-}_k} = - \Delta_{\gn_k}$.  Let also $\gp_1'=\gp_1$, $\gp_k' = \gp_k \cap \gp_{k-1}$,
$k=2,...,n-2$, $\gp_n'=\gp_{n-1}$, and denote by $\gs_i'$ and $\gn_i'$
the corresponding Levi parts and nilpotent radicals of the parabolic subalgebras $\gp_i'$.
We have the following proposition proved in \cite{M}.

\subsubsection{}\begin{prop}{prop1} \label{gl-bounded} Let $\dot{L}(\lambda)$ be an infinite dimensional bounded module.

(i) If $\lambda$ is an integral weight, then there is a unique $k$, $1 \leq k \leq n-1$,
such that $\Delta^{\rm{inj}} \dot{L}(\lambda) = - \Delta_{\gn_k}$ and
$\Delta^{\rm{fin}} \dot{L}(\lambda) =  \Delta_{\gs_k} \sqcup \Delta_{\gn_k} $.

(ii) If $\lambda$ is nonintegral, then there is a unique $k$, $1 \leq k \leq n$, such that
 $\Delta^{\rm{inj}} \dot{L}(\lambda) = - \Delta_{\gn'_k}$ and
$\Delta^{\rm{fin}} \dot{L}(\lambda) =  \Delta_{\gs_k'} \sqcup \Delta_{\gn_k'} $

\end{prop}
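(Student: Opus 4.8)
The plan is to determine the shadow $(\Delta^{\mathrm{inj}}\dot{L}(\lambda),\Delta^{\mathrm{fin}}\dot{L}(\lambda))$ purely from closure and boundedness properties of $M:=\dot{L}(\lambda)$, and then to match the resulting parabolic subset of $\Delta$ against the list $\{\gp_k\}$ (resp.\ $\{\gp_k'\}$). First I would record two structural facts. Since $M$ is a highest weight module its weights are bounded above by $\lambda$, so each $e_\alpha$ with $\alpha\in\Delta^+$ is locally nilpotent on the highest weight vector, hence (the locally finite vectors form a submodule and $M$ is simple) on all of $M$; thus $\Delta^+\subseteq\Delta^{\mathrm{fin}}M$. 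Next, for any root $\alpha$ the $\alpha$-strings of $M$ are weight modules over the $\gs\gl_2$-triple $(e_\alpha,f_\alpha,h_\alpha)$, and boundedness forces each such string to be a bounded $\gs\gl_2$-module; combined with simplicity this gives the dichotomy that $e_\alpha$ acts either injectively or locally finitely, reproving that $M$ has a shadow and giving $\Delta=\Delta^{\mathrm{inj}}M\sqcup\Delta^{\mathrm{fin}}M$ with $\Delta^{\mathrm{fin}}M\supseteq\Delta^+$, so that $\Delta^{\mathrm{fin}}M\cup(-\Delta^{\mathrm{fin}}M)=\Delta$.

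The decisive algebraic input is that $\Delta^{\mathrm{fin}}M$ is closed under root addition. I would prove this at the Heisenberg level: if $e_\alpha$ and $e_\beta$ act locally nilpotently and $\alpha+\beta\in\Delta$, then since $[e_\alpha,e_\beta]$ is a nonzero multiple of $e_{\alpha+\beta}$ and the three vectors span a nilpotent subalgebra, a binomial/PBW expansion of a high power $e_{\alpha+\beta}^N$ in terms of sufficiently high powers of $e_\alpha$ and $e_\beta$ shows $e_{\alpha+\beta}$ is locally nilpotent, i.e.\ $\alpha+\beta\in\Delta^{\mathrm{fin}}M$. Together with $\Delta^{\mathrm{fin}}M\cup(-\Delta^{\mathrm{fin}}M)=\Delta$, this makes $\Delta^{\mathrm{fin}}M$ a parabolic subset of $\Delta$; hence there is a parabolic subalgebra $\gp=\gl\oplus\gu$ with $\Delta^{\mathrm{fin}}M=\Delta_{\gl}\sqcup\Delta_{\gu}$ and $\Delta^{\mathrm{inj}}M=-\Delta_{\gu}$, and $\Delta^+\subseteq\Delta^{\mathrm{fin}}M$ forces $\Delta_{\gu}\subseteq\Delta^+$.

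At this point $M$ is a simple highest weight module, and boundedness of multiplicities is what pins down the parabolic. I would argue that boundedness forces $\gu$ to be abelian: filtering $M$ by $\gu^-$-degree, a weight $\mu=\lambda-\sum_{\gamma\in\Delta_{\gu}} n_\gamma\gamma$ is reached by PBW monomials in the $f_\gamma$, and if $[\gu^-,\gu^-]\neq 0$ the number of such monomials of a fixed weight grows without bound in a suitable direction (relations in $U(\gu^-)$ produce extra independent monomials of the same weight, and injectivity of the $f_\gamma$ prevents their cancellation), contradicting $\dim M^\mu<C$. Hence $\gu$ is abelian. In $\ggl_n$ the abelian nilradicals are exactly those of the maximal parabolics $\gp_k$, so $\Delta_{\gu}=\Delta_{\gn_k}$ for a unique $k$, which gives (i); uniqueness is automatic since the shadow is an invariant of $M$ and the sets $\Delta_{\gn_k}$ are pairwise distinct.

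For (ii) I would run the same argument but feed in the extra information coming from nonintegrality: if $\langle\lambda+\rho,\alpha^\vee\rangle\notin\Z$ then the $\alpha$-string $\gs\gl_2$-module has no finite-dimensional constituent, so $f_\alpha$ acts injectively and $-\alpha\in\Delta^{\mathrm{inj}}M$ for every such $\alpha$. This forces every positive nonintegral root to lie in $\Delta_{\gu}$ and leaves only the integral root subsystem to be organized by the closure-plus-boundedness analysis above; the abelian-nilradical constraint within that subsystem singles out the parabolics $\gp_k'=\gp_k\cap\gp_{k-1}$, now with $1\le k\le n$ to account for the two boundary cases. I expect the genuine difficulty to be the third step, the implication boundedness $\Rightarrow$ $\gu$ abelian: the closure argument and the $\gs\gl_2$-dichotomy are formal, but controlling weight multiplicities finely enough to exclude non-abelian nilradicals requires a quantitative estimate on PBW monomial counts (equivalently, Lemma 3.3 of \cite{M} on finite length together with a growth computation), and this is precisely where boundedness, as opposed to mere cuspidality, is used.
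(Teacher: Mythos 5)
Your decisive third step --- ``boundedness forces $\gu$ abelian'' --- is false as stated, and part (ii) of the very proposition you are proving supplies the counterexamples: for nonintegral bounded weights of middle type the shadow parabolic is $\gp_k'=\gp_k\cap\gp_{k-1}$, whose nilradical $\gn_k'=\gn_k+\gn_{k-1}$ is \emph{not} abelian. Concretely, in $\ggl_3$ take $\lambda$ with $(\lambda,\alpha_1),(\lambda,\alpha_2)\notin\mathbb{Z}$ but $(\lambda+\rho,\theta)\in\mathbb{Z}_{>0}$, where $\theta=\alpha_1+\alpha_2$: then $\dot{L}(\lambda)=\dot{M}(\lambda)/\dot{M}(s_{\theta}\cdot\lambda)$ has weight multiplicities bounded by $(\lambda+\rho,\theta^{\vee})$, yet $f_{\alpha_1}$, $f_{\alpha_2}$ and $f_{\theta}$ all act injectively, so $\Delta^{\mathrm{inj}}\dot{L}(\lambda)=-\Delta^+$ and $\gu=\gn^+$ is nonabelian. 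Since your PBW-monomial-counting argument nowhere uses integrality of $\lambda$, it would apply verbatim to this module and ``prove'' it unbounded. The precise failure is the clause ``injectivity of the $f_\gamma$ prevents their cancellation'': in the simple quotient the image of the singular vector of weight $s_{\theta}\cdot\lambda$ creates linear dependences among the monomials $f_{\gamma_1}^{a}f_{\gamma_2}^{b}f_{\gamma_1+\gamma_2}^{c}v$ of a fixed weight even though each $f_\gamma$ is separately injective. Integrality must enter exactly at this step, and your sketch provides no mechanism for it; by the same token your claim in (ii) that the ``abelian constraint within the integral subsystem'' singles out $\gp_k'$ rests on the same unproved lemma.

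For comparison: the paper does not reprove this proposition at all (it quotes Mathieu \cite{M}), but its Lemma~\ref{lemgl3} contains the correct local substitute for your step. There one shows that if two simple roots $\alpha,\beta$ have $f_\alpha,f_\beta$ injective on a bounded simple highest weight module, then (reducing to an $\gs\gl_3$, via the primitive vectors $f_\alpha^{k}v$ when $(\alpha,\beta)=0$) necessarily $(\alpha,\beta)\neq 0$, $s_\alpha s_\beta s_\alpha\cdot\lambda<\lambda$ and $(\lambda,\alpha),(\lambda,\beta)\notin\mathbb{Z}$ --- impossible when $\lambda$ is integral. With ``at most one injective simple root'' in hand, your correct preliminary observations ($\Delta^+\subseteq\Delta^{\mathrm{fin}}$, the shadow dichotomy, and $\Delta^{\mathrm{fin}}$ being a closed parabolic subset) do pin down $\gp_k$ in case (i), and the two adjacent nonintegral simple roots yield $\gn_k'$ in case (ii). Two smaller points: the closure of $\Delta^{\mathrm{fin}}$ under root addition is Fernando's subalgebra theorem and is genuinely nontrivial --- your Heisenberg binomial sketch does not control interleaved words in $e_\alpha,e_\beta$, since local nilpotency of each letter gives no uniform vanishing degree along the generated subspace, so cite \cite{Fe} rather than call it formal; and you should note explicitly that infinite-dimensionality forces $\Delta^{\mathrm{inj}}\neq\emptyset$, so that the index $k$ actually exists.
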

\subsubsection{}\begin{defn}{def-gl-type}
We will call a $\gg \gl_n$-bounded integral weight $\lambda$ to be {\it of type $k$} if it satisfies the conditions of (i) in~\Prop{prop1}. A $\gg \gl_n$-bounded nonintegral weight $\lambda$ which satisfies  the conditions of (ii) in~\Prop{prop1} will be called {\it  of type $1$} if $k=1$, {\it of type $(k,k-1)$} if $1<k<n$, and {\it of type $n-1$} if $k=n$. The type $1$ and type $n-1$  $\gg \gl_n$-bounded weights of  are of special interest. In these cases cases, $\dot{L}(\lambda)$ is  a quotient of parabolically induced modules for $\gp_1$ and $\gp_{n-1}$, respectively.\end{defn}

\subsection{Notation for $\gq (n)$}\label{notation-qn}
 Recall that $\gg := \gq(n)$ is the Lie subalgebra of $\gg \gl (n|n)$ consisting of all matrices of the form $X_{A,B}:=\left( \begin{matrix}  A & B \\ B & A  \end{matrix}  \right)$. The even part of $\gq(n)$ is naturally isomorphic to $\gg \gl_n$. We choose the natural triangular decomposition:
$\gg=\gn^-\oplus\gh\oplus\gn^+$ where $\gh_{\ol{0}}$ consists of the elements
$X_{A,0}$ where $A$ is diagonal, $\gh_{\ol{1}}$ consists of the elements
$X_{0,B}$ where $B$ is diagonal,  and $\gn^+$ (resp., $\gn^-$)
consists of the elements $X_{A,B}$ where $A,B$ are
strictly upper-triangular (resp., lower-triangular).
 
The root system of $\gq (n)$ is $\Delta$, i.e. coincides with the one of $\ggl_n$, however each root space $\gq(n)^{\alpha}$ has both even and odd dimension $1$.

For every $\alpha \in \Delta$ we fix odd generators $E_{\alpha}$ of $\gq(n)^{\alpha}$ and $F_{\alpha}$ of $\gq(n)^{-\alpha}$ with $F_{\alpha} = E_{-\alpha}$.

We denote by $M(\lambda)$, $N(\lambda)$ and $L(\lambda)$ the Verma $\gq(n)$--module with highest
weight $\lambda$, the Weyl module of highest weight $\lambda$, and the unique simple quotient of $N(\lambda)$,
respectively. Unless otherwise stated, for a highest weight module $N$, by  $\ol{N}$
we will denote the maximal submodule of $N$ that intersects trivially the highest weight space of $N$.

\subsubsection{}
A natural question is whether for $\lambda\not=0$ one has
$L(\lambda)_{\ol{0}}\cong L(\lambda)_{\ol{1}}$ as $\mathfrak{gl}_n$-modules
or, at least, whether the following formula holds:
\begin{equation}\label{L01}
 \ch L(\lambda)_{\ol{0}}=\ch L(\lambda)_{\ol{1}}
\end{equation}

Up to isomorphism, there exists at most two simple modules of highest weight $\lambda$:
$L(\lambda)$ and $\Pi(L(\lambda))$, where $\Pi$ is the parity change functor.
Let $r$ be the number of non-zero coordinates of $\lambda$:
 $\lambda=(a_1,\ldots,a_n)$, $r:=\#\{i: a_i\not=0\}$.
One  has $L(\lambda)\cong \Pi(L(\lambda))$ if and only if
$r$ is odd, see~\cite{P}.
If $r$ is even, one has $L(\lambda)^*\cong L(\lambda)$ if $r$ is divisible by $4$ and
$L(\lambda)^*\cong \Pi(L(\lambda))$ otherwise.
In particular, the answer on the first question is positive if $r$ is odd and the formula~(\ref{L01})
holds if $r$ is not divisible by $4$.

It is not hard to deduce from~\Lem{lemcliff} below that
the formula~(\ref{L01}) also holds for all $\lambda$ apart for the cases when
there exists a sequence $\lambda=\lambda^0,\lambda^1,\ldots,\lambda^k=0$, where
$L(\lambda^{i+1})$ is a subquotient of $M(\lambda^i)$; note that if such a sequence
exists, then $\lambda\in Q^+$  and for each $k$
one has $\#\{i: a_i=k\}=\#\{i: a_i=-k\}$.

\subsubsection{}
In~\Lem{deg-pos} we will use the following fact.

\begin{lem}{lemcliff}
For any $\lambda\not\in Q^+$ one has
$\ch L(\lambda)_{\ol{0}}=\ch L(\lambda)_{\ol{1}}$.
\end{lem}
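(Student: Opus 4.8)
The plan is to exploit the Clifford algebra structure carried by the highest weight space of $L(\lambda)$, together with the fact that $\lambda\notin Q^+$ forces the submodule structure along $\gh_{\ol 1}$ to be ``nondegenerate'' in a suitable sense. Recall that the highest weight space $L(\lambda)^{\lambda}$ is a module over the Clifford algebra $\mathcal C_{\lambda}$ associated to the symmetric bilinear form $(h,h')\mapsto \lambda([h,h'])$ on $\gh_{\ol 1}$; this form has rank equal to the number $r$ of nonzero coordinates of $\lambda$. When $r$ is odd the simple Clifford module is already parity-balanced in a way that gives $L(\lambda)\cong\Pi(L(\lambda))$, so the character identity is immediate; the genuine content is to handle even $r$, in particular $r$ divisible by $4$, where $L(\lambda)^*\cong L(\lambda)$ and the above parity argument does not apply directly.

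First I would set up the parity-grading character generating function. Introduce the super-character $\operatorname{sch} L(\lambda):=\ch L(\lambda)_{\ol 0}-\ch L(\lambda)_{\ol 1}$, so that the claim $\ch L(\lambda)_{\ol 0}=\ch L(\lambda)_{\ol 1}$ is equivalent to $\operatorname{sch} L(\lambda)=0$. The key observation is that $\operatorname{sch}$ behaves well with respect to the short exact sequences relating $M(\lambda)$, $N(\lambda)$, and $L(\lambda)$, and that $\operatorname{sch} M(\lambda)$ is explicitly computable: the Verma module is a free module over $U(\gn^-)$ tensored with the highest weight Clifford module, so $\operatorname{sch} M(\lambda)$ factors as (super-character of the highest weight space) times (a character coming from $\gn^-$). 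The super-dimension of the highest weight Clifford module vanishes exactly when the rank $r$ is positive, which is precisely the regime $\lambda\neq 0$; but the subtlety for the simple quotient is that subquotients $L(\mu)$ with $\mu=0$ could contribute a nonzero super-character. This is where the hypothesis $\lambda\notin Q^+$ enters decisively.

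The main step is therefore to rule out any composition factor of the form $L(0)$ (or more generally any $L(\mu)$ whose highest weight space has nonzero superdimension) appearing in $M(\lambda)$. Since every composition factor $L(\mu)$ of $M(\lambda)$ satisfies $\mu\leq\lambda$, and $\lambda\notin Q^+$ means $\lambda\neq\mu+Q^+$ can reach $0$ only if $\lambda\in Q^+$, we see that $0$ cannot occur as a weight of $M(\lambda)$ at all when $\lambda\notin Q^+$; indeed $0\leq\lambda$ would force $\lambda\in Q^+$, contradicting the hypothesis. More generally, the only weights $\mu$ with $r(\mu)$ even and superdimension-producing behavior that could break balance are those reachable by the degeneration sequence described in the preceding subsubsection, and that subsubsection notes such a sequence forces $\lambda\in Q^+$. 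Thus under $\lambda\notin Q^+$ every composition factor contributes zero to the super-character, and summing over the finite-length composition series gives $\operatorname{sch} L(\lambda)=0$.

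The hard part will be making the Clifford-module superdimension bookkeeping precise and confirming that the only obstruction to $\operatorname{sch}=0$ is the appearance of a ``null'' weight in the degeneration chain ending at $0$. Concretely I expect the delicate point to be verifying that $\operatorname{sch}$ is genuinely additive on short exact sequences of $\gq(n)$-modules in the weight-space-graded sense (so that it descends from $M(\lambda)$ to $L(\lambda)$ through the maximal submodule $\ol{M(\lambda)}$), and that the highest weight space superdimension vanishes for every $\mu\neq 0$ occurring, with the sole potential exception being $\mu=0$ which is excluded by $\lambda\notin Q^+$. Once these two bookkeeping facts are in place, the conclusion follows by a straightforward induction on the length of the composition series of $M(\lambda)$.
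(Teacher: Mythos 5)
Your overall strategy---supercharacters plus the composition series of $M(\lambda)$---can in principle be completed, but as written its central step is circular and the proposed induction is not well-founded. The assertion that ``every composition factor contributes zero to the super-character'' is precisely the lemma applied to those factors. What makes it salvageable is a closure property you state only for $\mu=0$: every factor $L(\mu)$ of $M(\lambda)$ has $\mu\leq\lambda$, and $\mu\leq\lambda$ with $\lambda\notin Q^+$ forces $\mu\notin Q^+$ (if $\mu\in Q^+$ then $\lambda=\mu+(\lambda-\mu)\in Q^+$). But then you must induct, and ``induction on the length of the composition series of $M(\lambda)$'' fails: the inductive hypothesis for a factor $L(\mu)$ concerns $M(\mu)$, whose length is not controlled by that of $M(\lambda)$. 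The standard repair is to invert the (super)character matrix along the partial order, writing $\operatorname{sch} L(\lambda)$ as a locally finite combination of $\operatorname{sch} M(\mu)$ with $\mu\leq\lambda$, each of which vanishes since $\mu\neq 0$; this in turn requires justifying finite length of Verma modules for $\gq(n)$ (or working with local composition series), neither of which you address. You would also need to rule out cancellation at the top: factors isomorphic to $L(\lambda)$ and to $\Pi(L(\lambda))$ enter the supercharacter with opposite signs, and one must use that $M(\lambda)^{\lambda}$ is a simple $\fh$-module, so exactly one factor has highest weight $\lambda$. Finally, your appeal to the preceding subsubsection's remark about degeneration sequences to $0$ is itself circular: the paper deduces that remark \emph{from} this lemma.

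The paper's proof avoids all of this bookkeeping by applying the Clifford analysis to \emph{every} weight space, not only to highest weight spaces. For any weight module $N$ and any weight $\nu\neq 0$, the space $N^{\nu}$ is a module over $U(\fh)/J_{\nu}\cong \Cl(r)\otimes\Lambda(n-r)$ with $r\geq 1$, and every module over this superalgebra is parity-balanced, so $\dim(N^{\nu}\cap N_{\ol{0}})=\dim(N^{\nu}\cap N_{\ol{1}})$ outright---no exactness, no Verma factorization, no composition series. Since $\lambda\notin Q^+$ says precisely that $0$ is not a weight of $L(\lambda)$ (an observation you already make for $M(\lambda)$), the character identity is immediate. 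You have all the needed ingredients; the missing idea is that parity-balance is a property of individual weight spaces of arbitrary weight modules, which collapses the entire homological apparatus you set up.
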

\begin{proof}
Take any weight module $N$ and consider
its weight space  $N^{\nu}$. Clearly, $N^{\nu}$ is a $U(\fh)$ module
and its annihilator contains the ideal $J_{\nu}$ generated by
the elements  $h-\langle\nu,h\rangle, h\in\fh_{\ol{0}}$.
One readily sees that as an associative superalgebra
$U(\fh)/J_{\nu}$ is isomorphic to a product
of a Clifford superalgebra $\Cl(r)$ and the external algebra $\Lambda(n-r)$,
where $r$ is the number of non-zero entries in $\nu=(a_1,\ldots,a_n)$
and the $\mathbb{Z}_2$-grading  on the external algebra is induced
by its $\mathbb{Z}$-grading. If $r\not=0$, then for  any simple $\Cl(r)$-module $E$
one has $\dim E_{\ol{0}}=\dim E_{\ol{1}}$. Therefore for $\nu\not=0$
any simple $U(\fh)/J_{\nu}$-module has the same property
and thus any  $U(\fh)/J_{\nu}$-module has the same property.
Therefore for any weight module $N$ one has
$$\forall\nu\not=0\ \ \dim (N^{\nu}\cap N_{\ol{0}})=\dim (N^{\nu}\cap N_{\ol{1}}).$$
If $\lambda\not\in Q^+$, then $L(\lambda)_0=0$
and the assertion follows.
\end{proof}

\subsubsection{Definitions} \label{def-qn}
We call a $\gq(n)$--module {\it weight}, {\it bounded}, or {\it cuspidal}  if, viewed as  $\ggl (n)$--module,
 it is a weight, bounded, or cuspidal, respectively.

\begin{rem}{}
The definitions of ``cuspidal''  and ``torsion free'' modules of a classical Lie superalgebra $\gg$ are
different from the one used here. In general, a weight module $M$ is called cuspidal if it is not parabolically induced. In the particular case of
$\gg = \gq (n)$ and a simple module $M$,  the two notions coincide (see \cite{DMP} for details).
\end{rem}

We call $\lambda\in\gh^*_{\bar{0}}$ a {\it $\gq(n)$--bounded ({\rm or just} bounded) weight}, if $L(\lambda)$ is bounded.

Introduce a partial order on $\C$ as follows:
$$
z \succ w \mbox{ if } z-w\in\mathbb{Z}_{>0} \mbox{ or } z=w=0.
$$
By~\cite{P},
$L(\lambda)$ is finite dimensional if and only if
$(\lambda,\vareps_1) \succ (\lambda,\vareps_2)\succ...\succ (\lambda,\vareps_n)$.

For a root $\alpha = \vareps_i - \vareps_j$ we set $\ol{\alpha} = \vareps_i + \vareps_j$.
We say that a weight $\lambda$ is {\it $\alpha$-atypical} if $(\lambda, \ol{\alpha}) = 0$.
If $(\lambda, \ol{\alpha}) \neq 0$ for all $\alpha \in \Delta$ we call $\lambda$
{\it typical}. If $(\lambda, \alpha) \in \Z$ we say that $\lambda$ is
{\it $\alpha$--integral}.

A $\gq(n)$--module is said to {\it have a shadow} if, viewed as a $\ggl_n$--module, it has a shadow.

As it will be seen in Section \ref{sing_arbitrary}, the notion of singularity for arbitrary weights
of $\gq (n)$ is ambiguous. However, one has a well-defined singularity notion for bounded weights.

\subsubsection{}\label{fam_bounded}
We finish this section with an important example of a family of bounded $\gq (n)$-modules.

Let $\mathcal W ({\bf x}, {\boldsymbol{\xi}})$ denote the superalgebra of differential operators of the polynomial superalgebra $\C [x_1,...,x_n; \xi_1,...,\xi_n]$, where the
$x_i$'s are even,  the $\xi_j$'s are odd (and $\xi_j^2 = 0$).
We view $\C [x_1,...,x_n; \xi_1,...,\xi_n]$ as a $\Z$-graded ring with $\deg x_i = \deg \xi_j = 1$.

The correspondence
$$
 \left( \begin{matrix}  A & B \\ B & A  \end{matrix}  \right) \mapsto \sum_{i,j=1}^n \left( a_{ij}x_{i} \frac{\partial}{ \partial x_j} +  a_{ij}\xi_{i} \frac{\partial}{ \partial \xi_j}  + b_{ij} x_{i} \frac{\partial}{ \partial \xi_j} + b_{ij} \xi_{i} \frac{\partial}{ \partial x_j} \right)
 $$
 is a homomorphism of Lie superalgebras. For every ${\boldsymbol{\mu}} = (\mu_1,...,\mu_n) \in \C^n$ consider the space
$$
{\mathcal F}_{\boldsymbol{\mu}} = \{ f \in x_1^{\mu_1}...x_n^{\mu_n} \C [x_1^{\pm 1},...,x_n^{\pm 1}; \xi_1,...,\xi_n] \; | \; \deg f = \mu_1 +...+\mu_n\},
$$
where $\deg f$ is determined by the above convention $\deg x_i=\deg \xi_i=1$.
The above correspondence endows ${\mathcal F}_{\boldsymbol{\mu}}$ by a structure of $\gq(n)$-module.
One readily sees that ${\mathcal F}_{\boldsymbol{\mu}}$ is a bounded $\gq(n)$-module with $\deg {\mathcal F}_{\boldsymbol{\mu}} = 2^n$ (see Definition \ref{def-weight} (iii)). One has ${\mathcal F}_{\boldsymbol{\mu}} = {\mathcal F}_{\boldsymbol{\eta}}$ if and only if $\mu_i - \eta_i \in \Z$ and $\mu_1+...+\mu_n = \eta_1+...+\eta_n$. For $c \in \C$ we set for convenience ${\mathcal F}_c :={\mathcal F}_{(c,0,...,0)}$.

\section{Localization of weight $\gq(n)$--modules}

\subsection{The localization functor} \label{loc-functor} In this subsection we recall the definition of
the localization functor of weight modules.
For details we refer the reader to \cite{De} and \cite{M}.

Denote by $U$ the universal enveloping algebra $U(\gq (n))$ of $\gq(n)$.
For every $\alpha \in \Delta$ the multiplicative set
${\bf F}_{\alpha}:=\{ f_\alpha^n \; | \; n \in \Z_{\geq 0} \} \subset U$
satisfies Ore's localization conditions
because $\ad f_\alpha$ acts locally finitely on $U$. Let
$\cD_\alpha U$ be the localization of $U$ relative to ${\bf F}_{\alpha}$.
For every weight module $M$ we denote by $\cD_\alpha M$ the {\it
$\alpha$--localization of $M$}, defined as $\cD_\alpha M =
\cD_\alpha U \otimes_U M$. If $f_\alpha$ is injective on $M$, then $M$ can be naturally viewed as
a submodule of $\cD_\alpha M$, $f_\alpha$ is injective on
$\cD_\alpha M$, and $\cD_\alpha^2 M = \cD_\alpha M$. Furthermore, if
$f_\alpha$ is injective on $M$, then it is bijective on $M$ if and
only if $\cD_\alpha M = M$. Finally,
if $[f_\alpha, f_\beta] = 0$ and both $f_\alpha$ and $f_\beta$ are
injective on  $M$, then $\cD_\alpha \cD_\beta M = \cD_\beta
\cD_\alpha M$.

\subsection{Generalized conjugations.} \label{gencon}
 For $x \in \C$ and $u \in \cD_\alpha U$  we set
\begin{equation} \label{theta}
\Theta_x(u):= \sum_{i \geq 0} \binom{x}{i}\,
( \ad f_\alpha)^i (u) \, f_\alpha^{-i},
\end{equation}
where $\binom{x}{i}= \frac{x(x-1)...(x-i+1)}{i!}$. Since $\ad f_\alpha$ is locally nilpotent on
$U_\alpha$, the sum above is
actually finite. Note that for $x \in \Z$ we have $\Theta_x(u) =
f_\alpha^x u f_\alpha^{-x}$.  For a $\cD_\alpha U$-module $M$ by
$\Phi_\alpha^x M$ we denote the $\cD_\alpha U$-module $M$ twisted by
the action
$$
u \cdot v^x := ( \Theta_x (u)\cdot v)^x,
$$
where $u \in \cD_\alpha U$, $v \in M$, and $v^x$ stands for the
element $v$ considered as an element of $\Phi_\alpha^x M$. In
particular, $v^x \in M^{\lambda + x \alpha}$ whenever $v \in
M^\lambda$. Since $v^n = f_{\alpha}^{-n} \cdot v$ whenever $n \in
\Z$ it is convenient to set $f_{\alpha}^x \cdot v :=v^{-x}$ in $\Phi_\alpha^{-x} M$ for $x
\in \C$.

\subsection{} \label{lmnew} The following lemma is straightforward.

\begin{lem}{}
Let $M$ be a $\cD_{\alpha} U$-module, $v \in M$, $u \in \cD_\alpha U$ and $x,y \in \C$. Then

\noindent {\rm (i)} $\Phi_\alpha^x  M \simeq M$ whenever $x \in \Z$.

\noindent {\rm(ii)} $\Phi_\alpha^x (
\Phi_\alpha^y M) \simeq \Phi_\alpha^{x+y}M $ and, consequently, $\Phi_\alpha^x
\circ\Phi_\alpha^{-x} M \simeq \Phi_\alpha^{-x} \circ\Phi_\alpha^{x} M \simeq M$.

\noindent {\rm(iii)} $f_{\alpha}^x \cdot (f_{\alpha}^y \cdot v) =
f_{\alpha}^{x+y} \cdot v$.

\noindent {\rm(iv)} $f_{\alpha}^x \cdot (u \cdot (f_{\alpha}^{-x}
\cdot v)) = \Theta_x(u) \cdot v$.

\end{lem}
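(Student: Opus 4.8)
The plan is to reduce everything to a single structural fact about the maps $\Theta_x$: that $\{\Theta_x\}_{x\in\C}$ is a one-parameter family of algebra automorphisms of $\cD_\alpha U$ with $\Theta_0=\Id$ and $\Theta_x\circ\Theta_y=\Theta_{x+y}$ (so that $\Theta_{-x}$ inverts $\Theta_x$). Granting this, the twisted action $u\cdot v^x:=(\Theta_x(u)\cdot v)^x$ really is a $\cD_\alpha U$-module structure on $\Phi_\alpha^x M$, since $(uu')\cdot v^x=(\Theta_x(uu')v)^x=(\Theta_x(u)\Theta_x(u')v)^x=u\cdot(u'\cdot v^x)$, and all four assertions follow by unwinding definitions. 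So the first and only substantial step is to prove these properties of $\Theta_x$.

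First I would record that for each fixed $u\in\cD_\alpha U$ the expression $\Theta_x(u)=\sum_{i\ge0}\binom{x}{i}(\ad f_\alpha)^i(u)f_\alpha^{-i}$ is a \emph{polynomial in $x$} with coefficients in $\cD_\alpha U$: the sum is finite because $\ad f_\alpha$ is locally nilpotent on $\cD_\alpha U$, and each $\binom{x}{i}$ is a polynomial of degree $i$. Consequently, for fixed $u,u'$ both $\Theta_x(uu')$ and $\Theta_x(u)\Theta_x(u')$ are $\cD_\alpha U$-valued polynomials in $x$. For $x\in\Z$ we are given $\Theta_x(w)=f_\alpha^x w f_\alpha^{-x}$, which is conjugation by the invertible element $f_\alpha^x$ and hence multiplicative; so the two polynomials agree at every integer, and a polynomial with coefficients in a vector space that vanishes at infinitely many points is identically zero, giving $\Theta_x(uu')=\Theta_x(u)\Theta_x(u')$ for all $x\in\C$. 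The same interpolation argument, now in two variables $x,y$, proves $\Theta_x\circ\Theta_y=\Theta_{x+y}$: both sides applied to a fixed $u$ are polynomials in $(x,y)$ agreeing on $\Z\times\Z$, where the $\Theta$'s are honest conjugations, hence equal everywhere. Finally $\Theta_0=\Id$ is immediate from the formula. (Alternatively one can argue operator-theoretically: writing $L,R$ for left and right multiplication by $f_\alpha$ on $\cD_\alpha U$, with $\ad f_\alpha=L-R$ and $L,R$ commuting, one checks $\Theta_x=(LR^{-1})^x=(1+(\ad f_\alpha)R^{-1})^x$, whence the semigroup and homomorphism properties come from $(1+N)^x(1+N)^y=(1+N)^{x+y}$ for the locally nilpotent operator $N=(\ad f_\alpha)R^{-1}$.)

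With these properties in hand the four claims are bookkeeping. For (i), when $x\in\Z$ the map $\phi\colon\Phi_\alpha^x M\to M$, $\phi(v^x)=f_\alpha^{-x}\cdot v$, is a bijection intertwining the actions, because $\phi(u\cdot v^x)=f_\alpha^{-x}\Theta_x(u)v=f_\alpha^{-x}f_\alpha^x u f_\alpha^{-x}v=u\cdot\phi(v^x)$. For (ii), the map $\psi\colon\Phi_\alpha^{x+y}M\to\Phi_\alpha^x(\Phi_\alpha^y M)$, $\psi(v^{x+y})=(v^y)^x$, is a $\cD_\alpha U$-isomorphism: using $\Theta_y\circ\Theta_x=\Theta_{x+y}$ one computes $u\cdot(v^y)^x=((\Theta_{x+y}(u)v)^y)^x=\psi(u\cdot v^{x+y})$, and taking $y=-x$ together with $\Theta_0=\Id$ (so $\Phi_\alpha^0 M=M$) yields the ``consequently'' statement. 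Claims (iii) and (iv) then follow by unwinding the convention $f_\alpha^x\cdot m:=m^{-x}$ and applying (ii): for (iii), $f_\alpha^x\cdot(f_\alpha^y\cdot v)=(v^{-y})^{-x}$ corresponds under $\psi$ to $v^{-(x+y)}=f_\alpha^{x+y}\cdot v$; for (iv), writing $w:=\Theta_x(u)v$ we have $f_\alpha^{-x}\cdot v=v^x$, then $u\cdot v^x=w^x$, and finally $f_\alpha^x\cdot w^x=(w^x)^{-x}$, which corresponds under $\psi$ (with the pair $(-x,x)$) to $w^0=w=\Theta_x(u)\cdot v$.

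The only point where something must genuinely be proved rather than merely unwound is the multiplicativity and additivity of $\Theta_x$ for non-integral $x$, and I expect this to be the sole obstacle; the polynomial-interpolation argument above disposes of it cleanly. I would also note that no super sign complications arise, since $f_\alpha$ is the even negative root vector, so $\ad f_\alpha$ is an even locally nilpotent derivation and the expansion of $\Theta_x$ carries no Koszul signs.
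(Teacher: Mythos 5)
Your proof is correct, and it fills in exactly the verification the paper omits: the paper states this lemma without proof (``The following lemma is straightforward''), deferring to \cite{De} and \cite{M}, where the key point is precisely your observation that $\Theta_x(u)$ is polynomial in $x$ and agrees with conjugation by $f_\alpha^x$ at integers, so multiplicativity and the one-parameter property $\Theta_x\circ\Theta_y=\Theta_{x+y}$ follow by interpolation. Your reduction of (i)--(iv) to these properties of $\Theta_x$, including the correct order $\Theta_y\circ\Theta_x$ when unwinding the iterated twist in (ii) and the sign conventions $f_\alpha^x\cdot v:=v^{-x}$ in (iii)--(iv), matches the standard argument this section implicitly relies on.
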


\medskip

In what follows we set $\cD_\alpha^x M:=\Phi_\alpha^x (\cD_\alpha M)$ and refer to it as a
{\it twisted localization of $M$}. Note that the localization and the twisted localization functors are exact.

\subsection{Example: the case $\gq (2)$}\label{q2} In this case all weights are bounded.
We have one simple root $\alpha$, even root vectors $e_{\alpha}, f_{\alpha}$,
and odd ones $E_{\alpha}, F_{\alpha}$.
The $\ggl_2$-decomposition of the simple highest weight $\gq(2)$--modules is the following:
$$L(\lambda)=\left\{
\begin{array}{ll}
\dot{L}(\lambda)^{\oplus r}& \text{ for } (\lambda,\alpha)=1 \mbox{ or } (\lambda,\ol{\alpha})=0;\\
 \dot{L}(\lambda)^{\oplus 2 } \oplus \dot{L}(\lambda-\alpha)^{\oplus 2} & \text{ for } (\lambda,\alpha)\in\mathbb{Z}_{>1}\ \mbox{ and }  (\lambda,\ol{\alpha})\not=0;\\
N(\lambda) &\text{ otherwise}.
\end{array}
\right.$$
where $r \in \{ 1,2 \}$ (see, for example,~\cite{Maz}).

In particular, $L(\lambda)$ is finite-dimensional if and only if $(\lambda,\alpha)\in\mathbb{Z}_{>0}$
or $\lambda=0$ that is $(\lambda,\alpha)=(\lambda,\ol{\alpha})=0$.

Below we describe some relations between certain weights in terms of twisted localization.
\subsubsection{}\label{gl2-q2-int} Assume that $\lambda$ is integral with $(\lambda,\ol{\alpha})\not=0$ (integral typical case).
Then the module $N(\lambda)$
is either simple (for $(\lambda,\alpha)\not\in\mathbb{Z}_{>0}$) or has length two with the submodule
$N(s_{\alpha}\lambda)=L(s_{\alpha}\lambda)$ and the finite-dimensional quotient $L(\lambda)$.
In the latter case, if $v$ is a highest weight vector in $N(\lambda)$, then
$f_{\alpha}^{(\lambda, \alpha)+ 1}v\in L(s_{\alpha}\lambda)$ and $f_{\alpha}^{(\lambda, \alpha)+ 1}v$ is a $\gg \gl_2$-highest weight vector.  Conversely, if $u$ is a $\gg \gl_2$-highest weight vector in $L(s_{\alpha}\lambda)$ of weight $s_{\alpha}\lambda - \alpha$, then $f_{\alpha}^{-(\lambda, \alpha)- 1}u$ is a $\gq(2)$-highest weight vector in $\cD_{\alpha}L(s_{\alpha}\lambda)$. In particular,
$L(\lambda)$ is a subquotient of the localization
$\cD_{\alpha}L(s_{\alpha}\lambda)$ if $ \lambda \geq s_{\alpha}\lambda$.

\subsubsection{} Consider now the case of integral $\lambda$ and $(\lambda,\ol{\alpha})=0$
(integral atypical case). The module $N(\lambda)$  as a $\ggl_2$-module
is the direct sum $(\dot{M}(\lambda)\oplus\dot M(\lambda-\alpha))^{\oplus r}$ and one readily sees that
$\dot{M}(\lambda)^{\oplus r}$ is, in fact, a $\gq (2)$-submodule of $N(\lambda)$.
If $(\lambda,\alpha)\in\mathbb{Z}_{\geq 0}$,
this submodule has length two: it has a submodule isomorphic to
$L(s_{\alpha} \cdot \lambda)=\dot{L}(s_{\alpha} \cdot \lambda)^{\oplus r}$
and the quotient is $L(\lambda)=\dot{L}(\lambda)^{\oplus r}$. Therefore
$L(\lambda)$ is a subquotient of the localization
$\cD_{\alpha} L(s_{\alpha} \cdot \lambda)$ if $\lambda \geq  s_{\alpha}\cdot \lambda$.

\subsubsection{} Assume finally that $\lambda$ is nonintegral.
In this case one easily checks  that if $u$ is a vector in $\cD_{\alpha} L(\lambda)$
such that $e_{\alpha} u = 0$, then $e_{\alpha} (f_{\alpha}^{(\lambda, \alpha)+1} \cdot u) = 0$ in $\cD_{\alpha}^{(\lambda, \alpha)}L(\lambda)$.
From here, arguing as above, we find that $\cD_{\alpha}^{(\lambda, \alpha)} L(\lambda)$
has a subquotient isomorphic to $L(s_{\alpha}\lambda)$ if $(\lambda, \ol{\alpha}) \neq 0$
and has a subquotient isomorphic to $L(s_{\alpha} \cdot \lambda)$ if $(\lambda, \ol{\alpha}) = 0$.

\subsection{}
The following statement will be useful in Section~\ref{stepthmq}.

\begin{lem}{q3-loc}
The ${\mathfrak q}(3)$--module ${\cD}_{\alpha_1} L(s_1 \cdot 0)$ contains a subquotient isomorphic to
$L(s_2 \cdot 0)$.
\end{lem}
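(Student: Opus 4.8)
Write $\lambda := s_1\cdot 0 = (-1,1,0)$ and record the two weight identities that drive the proof: $s_2\cdot 0 = (0,-1,1) = -\alpha_2$ and $0 = \lambda + \alpha_1 = s_1\cdot\lambda$. Note that $\lambda$ is $\alpha_1$--atypical, since $(\lambda,\ol{\alpha_1}) = 0$, and that neither $\alpha_1-\alpha_2$ nor $\alpha_2-\alpha_1$ is a root; consequently $e_{\alpha_2}$ and $E_{\alpha_2}$ supercommute with $f_{\alpha_1}$, hence with $f_{\alpha_1}^{-1}$ in $\cD_{\alpha_1}L(\lambda)$. The plan is to produce a weight $0$ highest weight vector $v$ in a suitable subquotient of $\cD_{\alpha_1}L(\lambda)$ and then to apply $F_{\alpha_2}$ (or $f_{\alpha_2}$), obtaining a highest weight vector of weight $-\alpha_2 = s_2\cdot 0$.

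First I would locate the weight $0$ vector. Restricting to the $\alpha_1$--copy of $\gq(2)$, the vector $\lambda$ plays the role of $s_{\alpha_1}\cdot 0$ in the integral atypical case of \S\ref{q2} (here $(\lambda,\alpha_1) = -2$ and $(\lambda,\ol{\alpha_1}) = 0$), so that analysis produces, inside $\cD_{\alpha_1}L(\lambda)$, a subquotient $\ol M$ which is a highest weight module of highest weight $0$; let $v\in\ol M$ be its highest weight vector, realized by the image of $f_{\alpha_1}^{-1}$ applied to the top of $L(\lambda)$. I claim $v$ is a genuine $\gq(3)$--highest weight vector of $\ol M$: it is annihilated by $e_{\alpha_1},E_{\alpha_1}$ by construction, while $e_{\alpha_2}v = E_{\alpha_2}v = 0$ holds automatically because the weight $\alpha_2 = (0,1,-1)$ does not occur in $\cD_{\alpha_1}L(\lambda)$, all of whose weights lie in $\lambda + \Z\alpha_1 - \Z_{\geq 0}\alpha_2$. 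Since $v$ has weight $0$ we may also take $\gh_{\ol{1}}v = 0$ (the weight $0$ highest weight space is a module over the Clifford algebra attached to the zero form, i.e. an exterior algebra, and so contains a vector killed by all of $\gh_{\ol{1}}$).

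Granting such a $v$, the vectors $F_{\alpha_2}v$ and $f_{\alpha_2}v$ both have weight $-\alpha_2 = s_2\cdot 0$ and are again highest weight vectors of $\ol M$. This is a one--line bracket check: $e_{\alpha_1}$ and $E_{\alpha_1}$ kill $F_{\alpha_2}v$ because $\alpha_1-\alpha_2$ is not a root and $e_{\alpha_1}v = E_{\alpha_1}v = 0$; and the $\alpha_2$--corrections vanish because $v$ has weight $0$ with $\gh_{\ol{1}}v = 0$, namely $E_{\alpha_2}F_{\alpha_2}v = \{E_{\alpha_2},F_{\alpha_2}\}v = h_{\ol{\alpha_2}}v = 0$ and $e_{\alpha_2}F_{\alpha_2}v = [e_{\alpha_2},F_{\alpha_2}]v = 0$, as $[e_{\alpha_2},F_{\alpha_2}]\in\gh_{\ol{1}}$ (and symmetrically for $f_{\alpha_2}v$). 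Hence any nonzero vector among $F_{\alpha_2}v, f_{\alpha_2}v$ generates a highest weight submodule of $\ol M$ of highest weight $s_2\cdot 0$, whose unique simple quotient is $L(s_2\cdot 0)$ (the parity being pinned down by $s_2\cdot 0$ having an even number of nonzero entries); as a subquotient of a subquotient of $\cD_{\alpha_1}L(\lambda)$ this is the desired copy of $L(s_2\cdot 0)$.

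The only real content, and the step I expect to be the main obstacle, is the \emph{nonvanishing} $F_{\alpha_2}v\neq 0$ or $f_{\alpha_2}v\neq 0$, i.e. that $v$ does not generate the trivial module inside $\ol M$. I would settle this by computing explicitly: tracing $v$ back through the atypical $\gq(2)$--localization of \S\ref{q2} gives $v$ as $f_{\alpha_1}^{-1}$ times an $\alpha_1$--extremal Clifford combination of the highest weight space of $L(\lambda)$, and one then expands $F_{\alpha_2}v$ using $[F_{\alpha_2},f_{\alpha_1}] = \pm F_{\alpha_1+\alpha_2}$, reducing it modulo lower weight terms to a nonzero multiple of a vector built from $F_{\alpha_1+\alpha_2}$ and $F_{\alpha_1}F_{\alpha_2}$ applied to the top of $L(\lambda)$. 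Alternatively one argues by contradiction: were $f_{\alpha_2}v = F_{\alpha_2}v = 0$, the entire $\alpha_2$--copy of $\gq(2)$ would annihilate $v$, forcing the $\gq(3)$--module generated by $v$ to be supported on $\Z\alpha_1$; but $f_{\alpha_1}v\neq 0$ (as $f_{\alpha_1}$ is bijective after localization) together with $[f_{\alpha_2},f_{\alpha_1}] = \pm f_{\alpha_1+\alpha_2}\neq 0$ should push the support off $\Z\alpha_1$, a contradiction. Either route yields the nonvanishing and completes the proof.
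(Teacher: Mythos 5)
Your reduction of the problem to the nonvanishing of $F_{\alpha_2}v$ or $f_{\alpha_2}v$ is exactly where the argument breaks, and in fact \emph{both vectors are zero}, so neither of your two proposed routes can be completed. To see this, realize your $v$ concretely, as the paper does: let $N'$ be the length-two quotient of the Weyl module $N(0)$ with socle $L(s_1\cdot 0)$ and trivial top, let $v_0$ span the one-dimensional space $N'_0$, and set $v_1:=f_1v_0$ (a highest weight vector of the socle). Then $f_1$ is injective on $N'$, $\cD_{\alpha_1}N'=\cD_{\alpha_1}L(s_1\cdot 0)$, and $v_0=f_1^{-1}v_1$ is precisely your weight-$0$ highest weight vector: it is genuinely killed by $\fn^+$ (being the image of the highest weight vector of $N(0)$) and by $\gh_{\ol{1}}$ (an odd operator sends $v_0$ into the opposite-parity part of the one-dimensional space $N'_0$). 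But the submodule of $\cD_{\alpha_1}L(s_1\cdot 0)$ generated by $v_0$ is $N'$ itself, and $N'_{-\alpha_2}=0$ --- this is the very fact the paper proves and exploits: the weights of $L(s_1\cdot 0)$ are $\leq -\alpha_1$, and $-\alpha_2\not\leq-\alpha_1$, while the top contributes only weight $0$. Since $F_{\alpha_2}v_0$ and $f_{\alpha_2}v_0$ lie in $N'_{-\alpha_2}$, both vanish. Consequently your first route (the explicit expansion via $[F_{\alpha_2},f_{\alpha_1}]=\pm F_{\alpha_1+\alpha_2}$) would simply compute to zero, and your second route is fallacious on its own terms: $f_{\alpha_2}v=F_{\alpha_2}v=0$ does \emph{not} confine the support of $Uv$ to $\Z\alpha_1$, since for instance $f_{\alpha_2}f_{\alpha_1}v=[f_{\alpha_2},f_{\alpha_1}]v=\pm f_{\alpha_1+\alpha_2}v$ need not vanish --- and indeed $Uv_0\cong N'$ has support well off $\Z\alpha_1$, so there is no contradiction to derive.

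The structural point you are missing is that inside $\cD_{\alpha_1}L(s_1\cdot 0)$ the copy of $L(s_2\cdot 0)$ does not sit \emph{below} a weight-$0$ singular vector; it appears only in the quotient by $N'$, and the relevant extension goes the opposite way from what your plan requires. The paper manufactures the auxiliary vector $u=(F_2f_1-f_2F_1)v_1\in L(s_1\cdot 0)$ of weight $s_1s_2\cdot 0$ and considers $f_1^{-2}u$ of weight $s_2\cdot 0$; this vector satisfies $E_2(f_1^{-2}u)=v_0\neq 0$, so $E_2$ maps the weight-$(-\alpha_2)$ vector \emph{up} to $v_0$, rather than $F_2$ mapping $v_0$ \emph{down}. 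Accordingly $f_1^{-2}u$ is a highest weight vector only modulo $N'$, and the proof must verify $\fn^+(f_1^{-2}u)\subset N'$ together with $f_1^{-2}u\notin N'$ (the latter again using $N'_{-\alpha_2}=0$). In the induced subquotient the trivial module sits as a \emph{sub} of the extension with the $s_2\cdot 0$-string as a \emph{quotient}, whereas your scheme needs $L(s_2\cdot 0)$ as a submodule under a copy of $L(0)$ on top; no such configuration is produced by your construction. Your preliminary steps are fine (the vanishing of $e_{\alpha_2}v$, $E_{\alpha_2}v$ on weight grounds, and the bracket checks showing $F_{\alpha_2}v$, $f_{\alpha_2}v$ would be $\fn^+$-singular), but the step you yourself flagged as ``the only real content'' is precisely where the claim is false, and repairing it forces one into the paper's detour through $u$ and the quotient by $N'$.
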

\begin{proof}
Let $N(0)$ be the Weyl module of the highest weight zero and $N''$ be its maximal submodule
with the property $N''_0=N''_{-\alpha_1}=0$. The quotient $N':=N(0)/N''$ has length two: its proper
submodule is $L(s_1\cdot 0)=L(-\alpha_1)$ with the quotient isomorphic to the trivial representation $L(0)$.
One has $\dim N'_0=\dim N(0)_0=1$; let $v_0$ be a non-zero vector in $N'_0$.
Let $f_1,f_2 $ (resp., $F_1,F_2$) be the standard even (resp., odd) generators of $\fn^-$ of weights
$-\alpha_1,-\alpha_2$ respectively. The weight space $N'_{-\alpha_1}$ is spanned by the vectors
$v_1:=f_1v_0$ and $F_1v_0$; these are highest weight vectors of $L(s_1\cdot 0)$.  Set
$$u:=(F_2f_1-f_2F_1)v_1.$$
One readily sees that $\fn^+_0u=0$. Let $E_1, E_2$ be the standard  odd generators of $\fn^+$ of weights
$\alpha_1,\alpha_2$ respectively. One has
 $E_1u=0, E_2u=f_1v_1$. In particular, $u\not=0$.

Since $v_1=f_1v_0$ one has
$${\cD}_{\alpha_1} N'={\cD}_{\alpha_1} L(s_1 \cdot 0).$$

Note that $u$ has weight $(s_1s_2)\cdot 0$ so $f_1^{-2}u$ has weight $s_2\cdot 0=-\alpha_2$.
We will show that $\fn^+(f_1^2u)\subset N'$ whereas $f_1^{-2}u\not\in N'$.
This means that the submodule of $({\cD}_{\alpha_1} N')/N'$ generated
by the image of vector $f_1^{-2}u$ is a quotient of $M(s_2 \cdot 0)$ and
this implies the statement.

Since $N'/L(-\alpha_1)\cong L(0)$, one has  $N'_{-\alpha_2}=0$.
In particular, the vector $f_1^{-2}u$ does not lie in $N'$ since it has weight $-\alpha_2$.
Since $\fn^+_0u=0$ and $u$ has weight  $(s_1s_2)\cdot 0$, one has
$\fn^+_0(f_1^{-2}u)=0$. In view of \S \ref{gl2-q2-int} we have $E_1(f_1^{-2}u)=0$.
One has
$$E_2(f_1^{-2}u)=f_1^{-2}(E_2u)=f_1^{-2}f_1v_1=v_0\in N'.$$
Hence $\fn^+(f_1^{-2}u)\in N'$ and $f_1^{-2}u\not\in N'$.
The assertion follows.
\end{proof}

\subsubsection{Remark}
Example~\ref{fam_bounded} provides an explicit realization of $L(s_1\cdot 0)$ and $N'$
(see~\Lem{q3-loc} for notation).
Retain notation of Example~\ref{fam_bounded} and
consider $\C$ as a submodule of ${\mathcal F}_0$ in the natural way. By abuse of notation we will denote all elements in ${\mathcal F}_0/\mathbb{C}$ and in ${\mathcal F}_0$ by the same letters. Set
$$v_0:=x_1^{-1}\xi_1,\ \ v_1:=f_1v=x_1^{-1}\xi_2-x_1^{-2}x_2\xi_1$$
and denote by $L, L'$ the submodules of $\overline{\mathcal F}/\mathbb{C}$ generated by
$v_0,v_1$ respectively. It turns out that $L\cong N'$ and $L'\cong L(s_1\cdot 0)$.

\section{Integral bounded $\ggl_n$--weights in terms of Weyl group orbits}
The description of all $\ggl_n$-bounded weights  provided by O. Mathieu in \cite{M} involves the notion of coherent family. In this section we will give a  description of the  integral bounded weights in terms of Weyl group orbits; in \S  \ref{descr_bounded} we obtain a similar description for
$\gq(n)$.

\subsection{Definitions and $W\cdot$-action}
Recall that $\lambda$ is {\it integral}
 if and only if $s_i\cdot\lambda$ and $\lambda$ are comparable
(i.e., $s_i\cdot\lambda\leq \lambda$ or $s_i\cdot\lambda\geq \lambda$) for each $i$. We note that this integrality condition is different from the condition $\lambda \in {\mathbb Z}^n$.
Also,  $\lambda$ is {\it regular} if $\Stab_{W \cdot}\lambda=\{\Id\}$ and {\it singular} otherwise.
Clearly, if $\lambda$ is integral (resp., regular, singular), then all weights in the orbit $W\cdot\lambda$
are integral (resp., regular, singular).

Henceforth $\lambda$ is called {\it $W$--maximal}
if $s_{\alpha} \cdot \lambda\not\geq\lambda$ for each root $\alpha$.
The stabilizer of a $W$--maximal weight is generated
by simple reflections: if $\lambda$ is $W$--maximal, then
$\Stab_{W\cdot}\lambda=\langle s_i: s_i\cdot\lambda=\lambda\rangle$.
In particular, a $W$--maximal weight is regular if and only if $s_i\cdot \lambda\not=\lambda$ for
$i=1,\ldots,n-1$.

\subsubsection{}\label{dotiii}
We will use the following properties:

(i) $f_{\alpha}$ acts injectively on $\dot{L}(\lambda)$ if and only if $s_{\alpha}\cdot\lambda\not<\lambda$;

(ii) if $s_{\alpha}\cdot\lambda>\lambda$, then $\dot{L}(s_{\alpha}\cdot\lambda)$ is a subquotient
of the (non-twisted) localized module $\cD_{\alpha}\dot{L}(\lambda)$ (the localization functor $\cD_{\alpha}$ is defined in \S \ref{loc-functor});

(iii) the module $\dot{L}(\lambda)$ is finite-dimensional if and only if $\lambda$ is a regular integral
$W$--maximal weight:
$$\dim \dot{L}(\lambda)<\infty\ \Longleftrightarrow\ \forall i\ s_i\cdot\lambda<\lambda;$$

(iv) for each weight $\mu$ there exists a sequence $\mu=\mu_0<\mu_1<\mu_2<\ldots<\mu_s$
such that $\mu_{i+1}=s_{k_i}\cdot\mu_i$ for some $k_i\in\{1,2,\ldots,n-1\}$ and
$\mu_s$ is $W$-maximal. We call such sequence a {\em $W$-increasing string} starting at $\mu$.

\subsection{Integral $\ggl_n$-bounded weights}\label{uniquestringgl}
It is easy to see that if $\dot{L}(\lambda)$ is bounded, then a (nontwisted) localized module
$\cD_{\alpha}\dot{L}(\lambda)$ is also bounded.
Combining (ii) and (iv) we obtain:
if $\mu$ is bounded and $\mu=\mu_0<\mu_1<\mu_2<\ldots<\mu_s$ is an increasing $W$-string, then
each $\mu_i$ is bounded.

As we will show in~\Lem{lemgl3} below, for an integral $\ggl_n$-bounded weight there exists at most one index $i$
such that $f_i$ acts injectively on  $\dot{L}(\lambda)$. Using (i), we obtain:
if $\mu$ is a $\ggl_n$-bounded integral weight,  then

(i) there exists a unique increasing $W$-string $\mu=\mu_0<\mu_1<\mu_2<\ldots<\mu_s$;

(ii) the set $\{i: s_i\cdot\mu_j=\mu_j\}$ is empty for  $j<s$ and has cardinality at most one for $j=s$.

The following proposition shows that an integral weight is bounded if and only if it satisfies (i), (ii).

\subsection{}
\begin{prop}{thmgl}
Let $\gg=\ggl_n$.

(i) Let $\lambda$ be
a regular $W$--maximal integral weight (i.e., $\dot{L}(\lambda)$ is finite-dimensional).
Apart from $\lambda$,
the orbit $W\cdot \lambda$ contains $(n-1)^2$ $\ggl_n$-bounded weights which are of the form
$\prod_{j=i}^k s_j \cdot \lambda$,
$1\leq i, k <n$. The bounded weights of type $i$ in this orbit are $\prod_{j=i}^k s_j
\cdot \lambda$.

(ii) Let $\lambda$ be a singular  $W$--maximal integral weight such that
 $\Stab_{W \cdot}\lambda=\{1,s_m\}$. The orbit
 $W\cdot \lambda$ contains $n-1$ $\ggl_n$-bounded weights which are of the form
$\prod_{j=k}^m s_j\cdot \lambda$,
$1\leq k, m <n$. There is a unique bounded weight of type $k$ in this orbit:
$\prod_{j=k}^m s_j\cdot \lambda $.

(iii) If $\lambda$ is a singular  integral weight such that
$s_i \cdot \lambda=s_j \cdot \lambda=\lambda$ for some $i\not=j$,
then  the orbit $W\cdot \lambda$ does not
contain $\ggl_n$-bounded weights.
\end{prop}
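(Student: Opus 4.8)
The plan is to pass to a purely combinatorial model of the orbit. Write $a=(a_1,\dots,a_n)$ for the coordinates of $\lambda+\rho$, so that for $w\in W$ the weight $w\cdot\lambda$ has $\lambda+\rho$-coordinates $w(a)$, i.e. the dot action becomes the permutation action on these coordinates. Since $\lambda$ is $W$-maximal, in the regular case $a_1>a_2>\dots>a_n$, while in the singular case of (ii) exactly $a_m=a_{m+1}$. A one-line computation of $s_i\cdot\mu-\mu=-\langle\mu+\rho,\alpha_i^\vee\rangle\alpha_i$ shows that $s_i$ strictly raises $\mu$ iff the coordinate vector of $\mu+\rho$ has an \emph{ascent} at position $i$ (its $i$-th entry is smaller than its $(i+1)$-st), and $s_i\cdot\mu=\mu$ iff these two entries are equal. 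Thus, using \S\ref{dotiii}(i) together with \Lem{lemgl3}, the criterion (i)--(ii) of \S\ref{uniquestringgl} translates into: \emph{every vertex of the (forced) increasing string of $\mu$ carries at most one ascent, every vertex strictly below the top is regular (no two equal adjacent entries), and the top $\lambda$ has at most one equal adjacent pair.}

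Next I would prove sufficiency and the explicit form in (i). A short computation identifies $\prod_{j=i}^k s_j\cdot\lambda$ as a \emph{single displacement} of the sorted vector $a$: for $k\ge i$ one removes $a_{k+1}$ and reinserts it at position $i$, and for $k<i$ one removes $a_k$ and reinserts it at position $i+1$. In either case the resulting vector has exactly one ascent, located at position $i$, so $\dot L(\prod_{j=i}^k s_j\cdot\lambda)$ is of type $i$ by \Prop{prop1}. Applying $s_i$ at that ascent yields again a single displacement whose ascent has shifted one step toward position $k$; iterating shows the increasing string is $\prod_{j=i}^k s_j\cdot\lambda<\dots<s_k\cdot\lambda<\lambda$, is forced, and reaches $\lambda$, so each such weight meets the criterion and is bounded. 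Counting, the left displacements number $\binom{n}{2}$ and the right ones $\binom{n-1}{2}$, for a total of $(n-1)^2$ distinct weights, exactly $n-1$ of each type $i$ as $k$ runs over $1,\dots,n-1$.

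The converse, that these are \emph{all} the bounded weights, I would prove by downward induction on the length $s$ of the increasing string, using the fact recalled in \S\ref{uniquestringgl} that boundedness propagates up the string. If $\mu$ is bounded with up-successor $\mu_1=s_p\cdot\mu$, then $\mu_1$ is bounded with a shorter string, hence a single displacement by induction, and $\mu=s_p\cdot\mu_1$ with $p$ a descent of $\mu_1$ that must still leave a single ascent. The main obstacle is precisely this step: swapping at a descent of the single displacement $\mu_1$ generically creates a \emph{second} ascent and destroys boundedness, so one must show that the only descents $p$ compatible with a single ascent are those adjacent to the existing ascent of $\mu_1$, and that each of these returns another single displacement. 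This is a finite case analysis on the local shape of $\mu_1$ near its displaced entry, and it is where the real work lies.

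For (ii) the same scheme applies, with the top weight now carrying the single equality $a_m=a_{m+1}$; the extra requirement that every vertex strictly below the top be regular forces the equal pair to be split apart, so that starting from $\lambda$ the only bounded descents are $s_{m-1}\cdot\lambda$ and $s_{m+1}\cdot\lambda$. Continuing as in (i) produces exactly the chain $\prod_{j=k}^m s_j\cdot\lambda$, $k=1,\dots,n-1$, where $\lambda=s_m\cdot\lambda$ itself occurs at $k=m$ and is of type $m$ (its unique injective simple root being $\alpha_m$); this yields the asserted $n-1$ weights, one of each type. Finally, (iii) is immediate from the translated criterion: the $W$-maximal representative of the orbit has stabilizer $\Stab_{W\cdot}\lambda^{\max}=\langle s_\ell:s_\ell\cdot\lambda^{\max}=\lambda^{\max}\rangle$, which contains a conjugate of $\langle s_i,s_j\rangle$ and hence has order $\ge 4$, so it is generated by at least two distinct simple reflections. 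Thus $|\{\ell:s_\ell\cdot\mu_s=\mu_s\}|\ge 2$ at the top $\mu_s$ of every increasing string in the orbit, violating condition (ii); hence no weight in $W\cdot\lambda$ is bounded.
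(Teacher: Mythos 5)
Your combinatorial model (dot action as permutation of the $\lambda+\rho$-coordinates, single displacements, ascents) is a faithful and useful translation, and your treatment of (iii) and the counting $\binom{n}{2}+\binom{n-1}{2}=(n-1)^2$ are correct. But the sufficiency half of your argument is circular. Before \Prop{thmgl}, the paper has established the criterion of \S\ref{uniquestringgl} in \emph{one direction only}: \Lem{lemgl3} together with \S\ref{dotiii} gives ``bounded $\Rightarrow$ (i),(ii)''; the converse implication is precisely what the proposition asserts (the paper says so explicitly: ``The following proposition shows that an integral weight is bounded if and only if it satisfies (i), (ii)''). So when you verify that each single displacement carries a unique forced increasing string reaching $\lambda$ and conclude ``so each such weight meets the criterion and is bounded,'' you are invoking the unproved half of the equivalence; nowhere in your proposal is any module actually shown to be bounded. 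The paper closes exactly this gap with \Lem{lemgl2}: if $s_i\cdot\mu<\mu$ for $i=1,\ldots,n-2$ (or for $i=2,\ldots,n-1$), then $\dot{L}(\mu)$ is a quotient of a module parabolically induced from a finite-dimensional $\ggl_{n-1}\times\ggl_1$-module and hence bounded; it then checks, using the inequality~(\ref{orderdom}), that the two minimal weights $\prod_{j=1}^k s_j\cdot\lambda$ and $\prod_{j=n-1}^k s_j\cdot\lambda$ of the strings satisfy this hypothesis, and propagates boundedness upward along the strings via \S\ref{dotiii}(ii). Your displacement picture would make that verification transparent (the displaced vector is strictly decreasing from position $2$ onward, resp.\ through position $n-1$), but some representation-theoretic input of this kind is indispensable and is absent from your proposal.

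A lesser issue is the necessity direction: your downward induction hinges on a local case analysis which you yourself flag as ``where the real work lies'' and do not carry out. It is plausibly completable, but note that the paper avoids it entirely: by~(\ref{orderdom}), non-decreasing strings from $w\cdot\lambda$ up to $\lambda$ are in bijection with reduced expressions of elements of $w\Stab_{W}\lambda$, so uniqueness of the increasing string forces $w$ (and $ws_m$ in the singular case) to admit a unique reduced expression, and the elements of $W$ with a unique reduced expression are exactly $1$ and $s_is_{i\pm1}\cdots s_k$. One line of Coxeter combinatorics thus replaces your case analysis on descents adjacent to the displaced entry.
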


This description easily follows from~\cite{M}. We give a short proof which outlines 
the proof of a similar result for $\gq(n)$; the lemmas appeared in the proof will be used later.

\begin{proof}
Let $\lambda$ be an integral $W$--maximal weight. Then $(\lambda+\rho,\beta)\in\mathbb{Z}_{\geq 0}$
for any $\beta\in\Delta^+$. If $l(s_iw)=l(w)+1$, then $w^{-1}\alpha_i\in\Delta^+$
(see~\cite{Bb}) so
$(w \cdot \lambda+\rho,\alpha_i)=(\lambda+\rho,w^{-1}\alpha_i)\in\mathbb{Z}_{\geq 0}$.
We conclude that
\begin{equation}\label{orderdom}
l(s_iw)=l(w)+1\ \Longrightarrow\ w\cdot\lambda\geq s_iw\cdot\lambda.
\end{equation}

As a result, for each integral $W$-maximal element $\lambda$
and each reduced expression $w=s_{i_k}\ldots s_{i_1}$
one has   a non-decreasing sequence
\begin{equation}\label{eqstringgl}
\lambda'=s_{i_k}\ldots s_{i_1}\cdot\lambda\leq s_{i_{k-1}}\ldots s_{i_1}\cdot\lambda\leq
\ldots\leq s_{i_1}\cdot\lambda\leq \lambda.
\end{equation}

We call such a sequence a {\em non-decreasing $W$-string starting at $\lambda'$}.
By above, the non-decreasing  $W$-strings starting at $\lambda'=w\cdot\lambda$ and ending at $\lambda$
are in one-to-one correspondence with the reduced expressions of the elements
in the set $w\Stab_{W}\lambda$.

Take an integral weight $\mu$ satisfying the conditions (i), (ii)
formulated in Section~\ref{uniquestringgl}. Combining these conditions,
we conclude that
there exists at most two non-decreasing $W$-string starting at $\mu$
and they are of the form
$$\begin{array}{l}
\mu=s_{i_k}\ldots s_{i_1}\cdot\lambda<s_{i_{k-1}}\ldots s_{i_1}\cdot\lambda<
\ldots<s_{i_1}\cdot\lambda<\lambda,\\
\mu=s_{i_k}\ldots s_{i_1}\cdot\lambda<s_{i_{k-1}}\ldots s_{i_1}\cdot\lambda<
\ldots<s_{i_1}\cdot\lambda=\lambda.
\end{array}$$

Recall that $\lambda$ is an integral $W$-maximal weight. By above,
this means that $w$ has a unique reduced expression and, if $s_m\cdot\lambda=\lambda$,
then $ws_m$ has also a unique reduced expression. The set of elements
in $W$ having a unique reduced expression is $B:=\{1, s_is_{i+1}\ldots, s_k, s_is_{i-1}\ldots, s_k\}$.
Thus $w\in B$ for regular $\lambda$,and $w,ws_m\in B$ for singular $\lambda$.
We conclude that $\mu$ appear in the lists (i), (ii) of~\Prop{thmgl}.
In the light of~\ref{uniquestringgl}, all integral bounded weights listed in (i), (ii) of~\Prop{thmgl}.

It remains to verify that all  weights listed in (i), (ii)  are bounded. By Section~\ref{uniquestringgl},
it is enough to check the boundedness of the minimal elements in each string i.e.,
the elements $\prod_{j=1}^k s_j\cdot \lambda, \prod_{j=n-1}^k s_j\cdot \lambda$.
From~(\ref{orderdom})
$$s_i\cdot \prod_{j=1}^k s_j\cdot \lambda\leq \prod_{j=1}^k s_j\cdot \lambda\ \text{ for } i\not=1.$$
If $\lambda$ is regular, then $\Stab_W\lambda=\{1\}$ and the inequalities are strict.
If $\lambda$ is singular and $\Stab_W\lambda=\{1, s_m\}$, then the inequalities are strict for
$k=m$ (because $s_ms_{m-1}\ldots s_1s_is_1s_2\ldots s_m\not=s_m$).
Using~\Lem{lemgl2} we complete the proof.
\end{proof}

\subsection{Lemmas used in the proof of~\Prop{thmgl}}
\subsubsection{}
\begin{lem}{lemgl2}
If $\lambda$ is such that $s_i\cdot \lambda<\lambda$ for $i=1,\ldots,n-2$ (or for $i=2,\ldots,n-1$),
then $\dot{L}(\lambda)$ is bounded.
\end{lem}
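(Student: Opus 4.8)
The plan is to realize $\dot{L}(\lambda)$ as a quotient of a module induced from a \emph{finite-dimensional} module over a maximal parabolic whose nilradical is abelian, and then bound the weight multiplicities by a direct count.

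First I would translate the hypothesis into a statement about a Levi subalgebra, treating the case $s_i\cdot\lambda<\lambda$ for $i=1,\dots,n-2$ (the other case being symmetric). Since $\dot{L}(\lambda)$ is simple it has a shadow, so each $f_i$ acts either injectively or locally nilpotently; by \S\ref{dotiii}(i) the inequality $s_i\cdot\lambda<\lambda$ forces $f_i$ to act locally nilpotently for $i=1,\dots,n-2$. On any highest weight module every simple $e_i$ is automatically locally nilpotent, since all weights are bounded above by $\lambda$ in the order $\leq$. Hence the Levi part $\gs_{n-1}$ of the parabolic $\gp_{n-1}$ of \S\ref{gl-n-not}, i.e.\ the $\ggl_{n-1}$ acting on the coordinates $1,\dots,n-1$ with simple roots $\alpha_1,\dots,\alpha_{n-2}$, acts locally finitely on $\dot{L}(\lambda)$.

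Next I would exploit the decomposition $\gn^-=\gn_{n-1}^-\oplus\fm^-$, where $\fm^-:=\gn^-\cap\gs_{n-1}$ and $\gn_{n-1}^-$ is the opposite nilradical, spanned by the $e_{\varepsilon_n-\varepsilon_i}$ with $i\le n-1$. The structural points are that $\gn_{n-1}^-$ is abelian (a sum of two of its roots $\varepsilon_n-\varepsilon_i$ is never a root) and is an ideal of $\gn^-$, so by PBW $U(\gn^-)=S(\gn_{n-1}^-)\,U(\fm^-)$. Writing $v_\lambda$ for a highest weight vector and $E:=U(\fm^-)v_\lambda=U(\gs_{n-1})v_\lambda$, this yields $\dot{L}(\lambda)=S(\gn_{n-1}^-)\cdot E$. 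The module $E$ is a highest weight $\gs_{n-1}$-module on which each $f_i$ ($i\le n-2$) acts locally nilpotently; being an integrable highest weight module over $\ggl_{n-1}$, it is finite-dimensional (its highest weight is dominant integral for $\gs_{n-1}$ precisely by the hypothesis, which is the rank $n-1$ instance of \S\ref{dotiii}(iii)).

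Finally I would bound the multiplicities. Fixing a weight basis of the finite-dimensional space $E$, the space $\dot{L}(\lambda)$ is spanned by the vectors $y^{c}w$, where $w$ runs over this basis and $y^{c}=\prod_{i=1}^{n-1}(e_{\varepsilon_n-\varepsilon_i})^{c_i}$ runs over the monomial basis of $S(\gn_{n-1}^-)$; such a vector has weight $\wt(w)-\sum_i c_i(\varepsilon_n-\varepsilon_i)$. Since every weight of $E$ has $\varepsilon_n$-coordinate equal to $\lambda_n$, the $\varepsilon_n$-coordinate of a target weight $\nu$ fixes the total degree $\sum_i c_i=\lambda_n-\nu_n$; and since the roots $\varepsilon_n-\varepsilon_i$ ($1\le i\le n-1$) are linearly independent, for each fixed $w$ the equation $\wt(w)-\nu=\sum_i c_i(\varepsilon_n-\varepsilon_i)$ has at most one solution $(c_i)\in\Z_{\ge0}^{n-1}$. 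Hence for every $\nu$ the spanning set of $\dot{L}(\lambda)^\nu$ has at most $\dim E$ elements, so $\dim\dot{L}(\lambda)^\nu\le\dim E$ and $\dot{L}(\lambda)$ is bounded. The case $i=2,\dots,n-1$ is identical after replacing $\gp_{n-1}$ by $\gp_1$ and tracking the $\varepsilon_1$-coordinate. The only genuine computation is this last count; the one point requiring care is the finite-dimensionality of $E$, i.e.\ the integrability of the Levi action, which is exactly where the hypothesis that all but one of the $s_i\cdot\lambda<\lambda$ enters.
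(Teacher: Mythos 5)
Your argument is correct and is in essence the paper's own proof: the paper induces the finite-dimensional simple module $E$ over the Levi $\ggl_{n-1}\times\ggl_1$ from the parabolic $\gp_{n-1}$ and observes that $\dot{L}(\lambda)$ is a simple quotient of the bounded induced module, and your decomposition $\dot{L}(\lambda)=S(\gn_{n-1}^-)\cdot E$ together with the multiplicity count is exactly the unwound justification of why that induced module (which is $S(\gn_{n-1}^-)\otimes E$ as an $\gh_{\bar{0}}$-module) is bounded. The only blemish is a harmless sign slip: applying $e_{\vareps_n-\vareps_i}$ \emph{adds} $\vareps_n-\vareps_i$ to the weight, so $y^{c}w$ has weight $\wt(w)+\sum_i c_i(\vareps_n-\vareps_i)$ and $\sum_i c_i=\nu_n-\lambda_n$, which affects neither the degree determination nor the uniqueness of $(c_i)$ coming from linear independence.
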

\begin{proof}
Let $E$ be the simple
$\ggl_{n-1}\times \ggl_1$-module of highest weight $\lambda$; view $E$ as $\ggl_{n-1}\times \ggl_1+\gn$
module with the trivial action of $e_{\alpha}, \alpha=\vareps_i-\vareps_n$.
The condition $s_i\cdot \lambda<\lambda$ for $i=1,\ldots,n-2$ ensures that $E$ is finite-dimensional and
this implies the boundedness of the induced $\ggl_n$-module
$\Ind_{\ggl_{n-1}\times \ggl_1+\gn}^{\ggl_n} E$. Since $\dot{L}(\lambda)$ is a simple quotient
of this induced module, it is bounded.
\end{proof}

\subsubsection{}
\begin{lem}{lemgl3}
(i) If $\lambda$ is an integral $\ggl_n$-bounded weight,
then there exists at most one simple root $\alpha$ such that $f_{\alpha}$
acts injectively on $\dot{L}(\lambda)$.

(ii) If $\lambda$ is a nonintegral $\ggl_n$-bounded weight,
there exist at most two simple roots $\alpha,\beta$ such that $f_{\alpha}, f_{\beta}$
act injectively  on $\dot{L}(\lambda)$. If $\alpha,\beta$ are such roots (i.e.,
$\alpha,\beta\in\Pi,\ \alpha\not=\beta$ and
$s_{\alpha}\cdot \lambda\not<\lambda,\ s_{\beta}\cdot \lambda\not<\lambda$), then

(a) $(\alpha,\beta)\not=0$ and $s_{\alpha}s_{\beta}s_{\alpha}\cdot \lambda<\lambda$,

(b) $(\lambda,\alpha),(\lambda,\beta)\not\in\mathbb{Z}$.
\end{lem}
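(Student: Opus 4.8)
My plan is to reduce everything to the representation theory of small rank Levi subalgebras and to exploit the injectivity criterion \ref{dotiii}(i), namely that $f_{\alpha}$ acts injectively on $\dot{L}(\lambda)$ precisely when $s_{\alpha}\cdot\lambda\not<\lambda$. The key structural input is that boundedness of $\dot{L}(\lambda)$ is inherited by the $\ggl_m$-constituents obtained by restricting to any Levi subalgebra containing the relevant simple roots: a bounded module restricts to a bounded module, and a simple highest weight subquotient of a bounded module is bounded. Thus, whenever two simple roots $\alpha,\beta$ are in play, I would pass to the rank-$2$ or rank-$3$ Levi subalgebra they generate and argue there, where the possibilities are completely explicit.

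For part (i), suppose toward a contradiction that two distinct simple roots $\alpha=\alpha_i$ and $\beta=\alpha_j$ both act injectively, i.e.\ $s_i\cdot\lambda\not<\lambda$ and $s_j\cdot\lambda\not<\lambda$. Since $\lambda$ is integral, $s_i\cdot\lambda$ and $\lambda$ are comparable, so $s_i\cdot\lambda>\lambda$ and likewise $s_j\cdot\lambda>\lambda$. First I would observe the roots cannot be orthogonal: if $(\alpha_i,\alpha_j)=0$ the corresponding $\gs\gl_2\times\gs\gl_2$ Levi acts, and one sees both $f_i$ and $f_j$ injective forces a weight-multiplicity growth incompatible with a single bound $C$ (the two commuting $\gs\gl_2$-strings produce weight spaces of unbounded dimension in the localization $\cD_i\cD_j\dot{L}(\lambda)$, which is still bounded by \ref{uniquestringgl}). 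So $(\alpha_i,\alpha_j)\neq0$, and the two roots generate an $\ggl_3$ (or $\gs\gl_3$) Levi. Restricting, $\dot{L}(\lambda)$ contains a bounded simple $\ggl_3$-highest weight module on which both negative simple root vectors act injectively; an explicit check in $\ggl_3$ — using that a bounded $\ggl_3$-module has degree bounded and that having both $f_1,f_2$ injective with $s_1\cdot\lambda>\lambda$, $s_2\cdot\lambda>\lambda$ in the integral case is exactly the finite-dimensional-dual configuration that fails boundedness — yields the contradiction. This $\ggl_3$ computation is where I expect the main obstacle to lie, since one must rule out the borderline integral configurations carefully rather than by a soft dimension count.

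For part (ii), with $\lambda$ nonintegral and $\alpha,\beta$ two injective simple roots, I would again reduce to the Levi they generate. The orthogonality case $(\alpha,\beta)=0$ is excluded by the same commuting-$\gs\gl_2$ argument as above, giving (a)'s first clause $(\alpha,\beta)\neq0$; so $\alpha,\beta$ generate an $\ggl_3$-Levi and I work inside it. There the single nonintegral case of Proposition \ref{gl-bounded}(ii) pins down the shadow: for a nonintegral bounded $\ggl_3$-weight of type $(k,k-1)$ the injective roots are exactly $-\Delta_{\gn'_k}$, a two-element set $\{\alpha,\beta\}$ with $(\alpha,\beta)\neq0$, and I would read off directly that $s_{\alpha}s_{\beta}s_{\alpha}\cdot\lambda<\lambda$ (this reflection is $s_{\overline{\alpha+\beta}}$ through the long root, which is in $\Delta^{\mathrm{fin}}$, giving the strict decrease), establishing (a). For (b), I would use the injectivity criterion together with nonintegrality of $\lambda$ with respect to $\alpha,\beta$: if $(\lambda,\alpha)\in\mathbb{Z}$ were to hold then $s_{\alpha}\cdot\lambda$ and $\lambda$ would be comparable, and combined with $f_{\alpha}$ injective this would force $s_{\alpha}\cdot\lambda>\lambda$, putting us back in an integral-type configuration for that root and contradicting that $\alpha,\beta$ jointly survive in the nonintegral shadow of \ref{gl-bounded}(ii). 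Having established the orthogonality exclusion and the $\ggl_3$ shadow description, that a third injective simple root cannot exist follows because the shadow of \ref{gl-bounded}(ii) admits at most two injective simple roots, completing the bound of at most two.
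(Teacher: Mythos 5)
Your overall framework (reduce to small-rank subalgebras, use the criterion of \S\ref{dotiii}(i)) is the right starting point, and your treatment of the adjacent integral case in (i) is essentially the paper's argument: both $(\lambda,\alpha),(\lambda,\beta)\in\Z_{<0}$ force $(\lambda,\alpha+\beta)\le -2$, so $\lambda$ is antidominant for the $\ggl_3$ and $\dot{M}(\lambda)=\dot{L}(\lambda)$ is a simple Verma module, which is unbounded. But your exclusion of the orthogonal case is a genuine gap, and it infects both (i) and the count in (ii). If $\alpha=\vareps_i-\vareps_{i+1}$, $\beta=\vareps_j-\vareps_{j+1}$ with $(\alpha,\beta)=0$, two commuting injective actions do \emph{not} produce unbounded weight multiplicities: the vectors $f_\alpha^af_\beta^bv$ have pairwise distinct weights $\lambda-a\alpha-b\beta$ (the two strings spread in linearly independent directions), so they never stack in one weight space; a simple Verma module over $\gs\gl_2\times\gs\gl_2$ with both coordinates antidominant is bounded of degree $1$ with both $f$'s injective, so restriction to the Levi generated by $\alpha$ and $\beta$ can never yield a contradiction; and, as your own parenthetical concedes, $\cD_\alpha\cD_\beta\dot{L}(\lambda)$ stays bounded, so no contradiction arises there either. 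The obstruction is invisible in that Levi. The paper's proof brings in the root $\gamma=\vareps_i-\vareps_j$ lying \emph{outside} it: since $\lambda-k\alpha+\gamma\not\le\lambda$ and $\lambda-k\alpha+\beta\not\le\lambda$, each $f_\alpha^kv$ is a highest weight vector for the $\gs\gl_3$-subalgebra $\ft$ generated by $e_{\pm\gamma},e_{\pm\beta}$, and for $k\gg 0$ its $\ft$-highest weight $\nu=\lambda-k\alpha$ satisfies $(\nu,\gamma),(\nu,\beta),(\nu,\beta+\gamma)+1\notin\Z_{\ge 0}$, so the $\ft$-module it generates is a simple (hence unbounded) Verma module sitting inside $\dot{L}(\lambda)$ --- contradiction. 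Some such ``drift to antidominance'' along the powers $f_\alpha^kv$ is needed, and your plan has no substitute for it.

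There is a second, smaller flaw in your derivation of (ii)(a),(b) from the shadow. On any highest weight module every positive root vector acts locally nilpotently, so ``the long root lies in $\Delta^{\rm{fin}}$'' carries no information; moreover, in the relevant nonintegral $\ggl_3$ configuration Proposition \ref{prop1}(ii) (with $n=3$, $k=2$) gives $\Delta^{\rm{inj}}\dot{L}(\lambda)=\{-\alpha,-\beta,-(\alpha+\beta)\}$, i.e.\ $f_{\alpha+\beta}$ acts \emph{injectively} while nevertheless $s_{\alpha+\beta}\cdot\lambda<\lambda$ --- the equivalence of \S\ref{dotiii}(i) is only valid for simple roots, so $s_\alpha s_\beta s_\alpha\cdot\lambda<\lambda$ cannot be ``read off'' from the shadow in either direction. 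The correct elementary route is the one above: $s_{\alpha+\beta}\cdot\lambda\not<\lambda$ would make $\dot{M}(\lambda)$ simple and unbounded, so $(\lambda,\alpha+\beta)+1\in\Z_{\ge 0}$; then (b) follows arithmetically, since $(\lambda,\alpha)\in\Z$ would force $(\lambda,\beta)\in\Z$, hence $(\lambda,\alpha),(\lambda,\beta)\in\Z_{<0}$ and $(\lambda,\alpha+\beta)\le-2$, a contradiction. Finally, note that invoking Proposition \ref{prop1} wholesale (as in your last step) already yields the counting claims of (i) and (ii) with no work at all, since $-\Delta_{\gn_k}$ contains one negative simple root and $-\Delta_{\gn'_k}$ at most two adjacent ones; the paper deliberately avoids this, because the self-contained argument is the template for the $\gq(n)$ analogue, where no result of Mathieu is available.
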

\begin{proof}
Let $\alpha,\beta$ be simple roots and $f_{\alpha}, f_{\beta}$
act injectively on $\dot{L}(\lambda)$, i.e.
 $(\lambda,\alpha),(\lambda,\beta)\not\in\mathbb{Z}_{\geq 0}$.
 Assume that $\lambda$ is a $\ggl_n$-bounded weight.

Consider the case $\gg = \ggl_3$ with the simple roots $\alpha,\beta$.
One has $s_{\alpha+\beta}=s_{\alpha}s_{\beta}s_{\alpha}$.
If $s_{\alpha+\beta}\cdot \lambda\not<\lambda$, then
 the Verma module is simple:
$\dot{M}(\lambda)=\dot{L}(\lambda)$ and thus unbounded, a contradiction. Thus
$s_{\alpha+\beta}\cdot \lambda<\lambda$ that is $(\lambda,\alpha+\beta)+1\in\mathbb{Z}_{\geq 0}$.

Now consider the case $\ggl_n$. If $(\alpha,\beta)\not=0$, then considering $\ggl_3$-subalgebra with
the simple roots $\alpha,\beta$ we obtain
$(\lambda+\rho,\alpha),(\lambda+\rho,\beta)\not\in\mathbb{Z}$.

Assume that $(\alpha,\beta)=0$. Let $v$ be a highest weight vector in $\dot{L}(\lambda)$.
Write $\alpha=\vareps_i-\vareps_{i+1}$,
$\beta=\vareps_j-\vareps_{j+1}$ with $i+1<j$. Set $\gamma:=\vareps_i-\vareps_j$.
Note that $f_{\gamma},f_{\beta},e_{\gamma},e_{\beta}$ generate a Lie algebra $\ft$ isomorphic
to $\gs \gl_3$.
Since $f_{\alpha}$ acts injectively,
$f_{\alpha}^kv\not=0$ for any $k$. One has $e_{\gamma}(f_{\alpha}^kv)=e_{\beta}(f_{\alpha}^kv)=0$,
because $\lambda-k\alpha+\gamma\not\leq\lambda, \lambda-k\alpha+\beta\not\leq \lambda$.
Thus $f_{\alpha}^kv$ is a highest weight vector for $\ft$. Since $\dot{L}(\lambda)$ is bounded,
$f_{\alpha}^kv$ generates a bounded $\ft$-module. The highest weight of this   $\ft$-module is $\nu$ such that

$$\begin{array}{l}
(\nu,\beta)=(\lambda-k\alpha,\beta)=(\lambda,\beta),\ \ \
(\nu,\gamma)=(\lambda-k\alpha,\gamma)=(\lambda,\gamma)-k,\\
(\nu,\beta+\gamma)=(\lambda-k\alpha,\beta+\gamma)=(\lambda,\beta+\gamma)-k.
\end{array}$$
By assumptions, $(\lambda,\beta)\not\in\mathbb{Z}_{\geq 0}$. Thus for $k>>0$ one has $(\nu,\beta),\
(\nu,\gamma),\ (\nu,\beta+\gamma)+1\not\in\mathbb{Z}_{\geq 0}$.
Using the $\ggl_3$-considerations above, we conclude that the $\ft$-simple module of  highest weight $\nu$
is not bounded, a contradiction. This completes the proof.
\end{proof}

\section{Bounded modules and  star action}
\subsection{The star action} \label{i-arrow}
The considerations in Section~\ref{q2} lead naturally to the following.

\subsubsection{}
\begin{defn}{}
For $\lambda\in\gh^*$ and $\alpha\in\Pi$  we set
$$s_{\alpha}*\lambda=\left\{
\begin{array}{ll}
s_{\alpha}\lambda &\text{ if } (\lambda,\ol{\alpha})\not=0,\\
s_{\alpha} \cdot \lambda &\text{ if } (\lambda,\ol{\alpha})=0.
\end{array}
\right.$$

For $i=1,\ldots,n-1$ we set $s_i*\lambda:=s_{\alpha_i}*\lambda$.
\end{defn}

We will write $(xy)\cdot\lambda$ and $(xy)*\lambda$ for $x\cdot y\cdot\lambda$ and $x*y*\lambda$ respectively.
For convenience we will write $s_{i_1}...s_{i_k} \cdot \lambda$ and $s_{i_1}...s_{i_k} * \lambda$ for $(s_{i_1}...s_{i_k}) \cdot \lambda$ and $(s_{i_1}...s_{i_k}) * \lambda$, respectively. For example, $s_1s_2 \cdot s_2 s_1 * \lambda = (s_1 s_2) \cdot ((s_2 s_1) * \lambda)$.

\subsubsection{}
Note that $s_{\alpha}*s_{\alpha}*\lambda=\lambda$ and
$s_{\alpha}*s_{\beta}*\lambda=s_{\beta}*s_{\alpha}*\lambda$
if $(\alpha,\beta)=0$. Therefore
the group $\widetilde{W}$ generated by the symbols $s_1,\ldots ,s_{n-1}$
subject to the relations $s_i^2=1,\ s_is_j=s_js_i$ for $i-j>1$
 acts on $\gh^*$ via $*$-action. Note that $\widetilde{W}$ is an infinite Coxeter group. In what follows, each time $w * \lambda$ is written, $w$ is assumed to be an element in $\widetilde{W}$.

\subsubsection{}
We call a weight $\lambda$

{\em $\widetilde{W}$--maximal} if $s_i*\lambda\not>\lambda$ for each $i$,

{\em integral} if $\lambda$ is integral as a $\ggl_n$-weight (note that
this is equivalent to the fact that $\lambda$ and $s_i*\lambda$ are comparable
for each $i$).

We call a $\widetilde{W}$--maximal weight $\lambda$ {\em regular} if  $s_i*\lambda\not=\lambda$ for each $i$
and {\em singular} otherwise. 

It is important to note that the definitions of regular and singular weights above are different from the definitions of regular and singular weights with respect to the star action of $\widetilde{W}$. With our definitions it is easier to formulate the classification theorem for bounded highest weight ${\mathfrak q}(n)$-modules as an analog of the corresponding theorem for $\mathfrak{gl}_n$-modules,  i.e., of~\Prop{thmgl}.

\subsubsection{}\label{stariii}
From \S\ref{def-qn} and \S \ref{q2}   follows that the  $*$-action has the properties
of the $W\cdot$-action listed in Section~\ref{dotiii}:

(i) $f_{\alpha}$ acts injectively on ${L}(\lambda)$ if and only if $s_{\alpha}*\lambda\not<\lambda$;

(ii) if $s_{\alpha}*\lambda>\lambda$, then $\dot{L}(s_{\alpha}\cdot\lambda)$ is a subquotient
of the localized module $\cD_{\alpha}\dot{L}(\lambda)$;

(iii) the module ${L}(\lambda)$ is finite-dimensional if and only if $\lambda$ is a regular integral
$\widetilde{W}$--maximal weight:

$$\dim L(\lambda)<\infty\ \Longleftrightarrow\ \forall i\ \  s_i*\lambda<\lambda.$$

Finally, in~\Prop{propW*max} we will show that

(iv) For each weight $\mu$ there exists a sequence $\mu=\mu_0<\mu_1<\mu_2<\ldots<\mu_s$
such that $\mu_{i+1}=s_{k_i}*\mu_i$ for some $k_i\in\{1,2,\ldots,n-1\}$ and
$\mu_s$ is $\widetilde{W}$--maximal.
We call such sequence a {\em $\widetilde{W}$-increasing string} starting at $\mu$.

\subsubsection{}
As we will see in~\S\ref{exastar} below,  regular $\widetilde{W}$-maximal
weights have different $\widetilde{W}$-orbits (so we do not consider $\widetilde{W}$-{\em regularity});
however, for a maximal regular weight $\lambda$
the weights $(s_j\ldots s_i)*\lambda$ form an increasing string:
$$(s_js_{j-1}\ldots s_i)*\lambda<(s_{j-1}\ldots s_i)*\lambda<\ldots<s_i*\lambda<\lambda,$$
see~\Lem{lems1s2}.

\subsubsection{}
Recall the relation $\succ $ on $\mathbb{C}$ introduced in \S \ref{notation-qn}. 
For $\lambda=\sum_{i=1}^n a_i\vareps_i$ one has
\begin{equation}\label{eqsucc}
\begin{array}{lll}
 \lambda>s_i*\lambda & \Longleftrightarrow\ & a_i\succ a_{i+1};\\
\lambda=s_i*\lambda & \Longleftrightarrow\ & a_i=a_{i+1}\not=0\ \text{ or }
(a_i, a_{i+1})=(-\frac{1}{2},\frac{1}{2});\\
\lambda<s_i*\lambda & \Longleftrightarrow\ & a_{i+1}-a_i\in\mathbb{Z}_{>0}\ \&\
 (a_i,a_{i+1})\not=(-\frac{1}{2},\frac{1}{2})
\end{array}
\end{equation}

In particular, $\lambda$ is a $\widetilde{W}$--maximal weight if for each $i=1,\ldots,n-1$ either
$a_i+a_{i+1}\not=0$ and  $a_i-a_{i+1}\not\in\mathbb{Z}_{<0}$ or $a_i+a_{i+1}=0$ and
$2a_i+1\not\in\mathbb{Z}_{<0}$.

\subsection{Examples}\label{exastar} In contrast to the usual and dot actions, the $*$-action does not induce an action of $W$,
see the examples below.

Fix $\gg = \gq (3)$. Using~\Lem{sectionq3} below, one can show
that there are $6$ types of integral $\widetilde{W}$-orbits (up to the permutation
of $s_1$ and $s_2$, which corresponds to the action of
an automorphism $\iota$, see Section~\ref{iota}).

1) The weights $(-\frac{1}{2},\frac{1}{2},\frac{1}{2}),(-\frac{1}{2},-\frac{1}{2},\frac{1}{2}),$ and $(a,a,a)$ with $a\not=0$ are singular $\widetilde{W}$--maximal weights with the orbits containing only one element.

2) The weight $\lambda=(a,b,c)$ with $a-b, b-c\in\mathbb{Z}_{>0},\  a+b\not=0,  b+c\not=0$ or with $a+b=0, 
2a\in\mathbb{Z}_{\geq 0}, b-c-1\in\mathbb{Z}_{>0}$
is a regular $\widetilde{W}$--maximal weight. Its $\widetilde{W}$-orbit  takes the form

$$\xymatrix{
& \lambda \ar@{-}[dr] \ar@{-}[dl]  & \\
s_1*\lambda  \ar@{-}[d]    & & s_2*\lambda \ar@{-}[d] \\
(s_2s_1)*\lambda  \ar@{-}[dr] & & (s_1s_2)*\lambda \ar@{-}[dl] \\
& (s_2s_1s_2)*\lambda=(s_1s_2s_1)*\lambda &}$$

The edges of the diagrams correspond to simple reflections $s_1,s_2$ and the upper vertex  in a given edge
is bigger with respect to the partial order.  
The increasing strings are represented by the paths going in upward direction, for instance
$s_1s_2s_1*\lambda<s_2s_1*\lambda<s_1*\lambda<\lambda$.

3)  The weight $\lambda=(a,-a,-a-1)$ with $2a+1\in\mathbb{Z}_{>0}$ is a regular $\widetilde{W}$--maximal weight.
Its $\widetilde{W}$-orbit  contains $9$ elements and has {\em two} $\widetilde{W}$--maximal weights:
$\lambda$, which is regular, and $\lambda+\alpha_1=(a+1,-a-1,-a-1)$, which is singular.

$$\small{\xymatrix{
&&&\lambda \ar@{-}[dl]  \ar@{-}[drr] && \\
& \lambda+\alpha_1= s_2 *(\lambda+\alpha_1) \ar@{-}[dl] & s_1*\lambda \ar@{-}[dr] &&&s_2*\lambda \ar@{-}[dl]  \\
s_1 * (\lambda+\alpha_1) \ar@{-}[dr] &&&s_2s_1*\lambda \ar@{-}[dll] &s_1s_2*\lambda\ar@{-}[dr] & \\
& s_2s_1*(\lambda+\alpha_1) = s_1s_2s_1*\lambda &&&&\! \! \! \! \! \! \! \! \! \! \! \! \! \! \! \! \! \! \! \! \! \! \! \! \! \! \! \! \! \! \! \! \! \! \!\! \! \! \! \!\! \! \! \! \! ! \! \! \! \!\! \! \! \! \!  s_2s_1s_2* \lambda = s_1s_2s_1s_2* \lambda }}$$

We see that there are two $\widetilde{W}$-increasing strings starting at $s_1s_2s_1*\lambda$:
one ends at $\lambda$, which is regular, and another one ends at $\lambda+\alpha_1$, which is singular.

4) The weight $0$ is a regular $\widetilde{W}$--maximal weight; its $\widetilde{W}$-orbit
 takes the following form
$$\xymatrix{
& 0 \ar@{-}[dr] \ar@{-}[dl]  & \\
s_1*0=-\alpha_1  \ar@{-}[dr]    & & s_2*0=-\alpha_2 \ar@{-}[dl] \\
& (s_2s_1)*0=(s_1s_2)*0=-\alpha_1-\alpha_2 &}$$

5) The weights $\lambda:=(-\frac{1}{2},\frac{1}{2},-\frac{1}{2}),\
 \lambda+\alpha_1=(\frac{1}{2},-\frac{1}{2},-\frac{1}{2})$
are singular $\widetilde{W}$--maximal weights, lying in the same $\widetilde{W}$-orbit.
 The $\widetilde{W}$-orbit contains
$5$ elements and is of the following form:

$$\xymatrix{
& \lambda+\alpha_1=s_2* (\lambda+\alpha_1) \ar@{-}[dl]  &  \lambda = s_1 * \lambda \ar@{-}[dr]  & \\
s_1* (\lambda+\alpha_1)  \ar@{-}[dr]   & & &  s_2* \lambda \ar@{-}[dll] \\
&s_2s_1*(\lambda+\alpha_1) = s_1s_2 * \lambda &&}$$

6) The weights $\lambda:=(-\frac{1}{2},\frac{1}{2},c)$ with $-\frac{1}{2}-c\in\mathbb{Z}_{>0}$
and $\lambda:=(a,a,b)$ with $a,a+b\not=0,\ a-b\in\mathbb{Z}_{>0}$ are singular
$\widetilde{W}$--maximal weights; their $\widetilde{W}$-orbit takes the following form
$$\xymatrix{
& \lambda=s_1*\lambda \ar@{-}[d]   & \\
& s_2*\lambda  \ar@{-}[d] & \\
& \ \ \ \ \ \ \ \ \ \ \ \ \ (s_1s_2)*\lambda=(s_2s_1s_2)*\lambda& }$$

\subsubsection{Remark}
Note that ${L}(\lambda)$ is finite-dimensional if and only if $\lambda$ is represented by a top vertex which belongs 
to $n-1$ edges (where "top" means that there is no edge ascending from this vertex).

As we will show in~\Thm{thmq0},
a integral weight $\mu$ is bounded if and only if there exists a unique ascending path going from $\mu$ and
that each vertex in this path, except the top one, belongs to $n-1$
edges and the top one belongs to at least $n-2$ edges.

\subsubsection{Remark}
Using the above classification of the orbits we obtain $(s_1s_2)^{180}*\lambda=\lambda$
for each weight $\lambda$. Thus $\widetilde{W}$ can be substituted by the group generated
by $1_,\ldots,s_{n-1}$ with the relations $s_i^2=1, (s_is_j)^2=1$ for $|i-j|>1$ and
$(s_is_{i+1})^{180}=1$.

\subsection{}
Later we will need the following lemma.

\begin{lem}{sectionq3}
For $n=3$ the maximal length of a $\widetilde{W}$-increasing  string is $4$ and the maximal element
in a string of length $4$ is regular.
\end{lem}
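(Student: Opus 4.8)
The plan is to reduce everything to a finite, explicit verification using the $*$-action formulas for $n=3$. For $\gg=\gq(3)$ we have exactly two simple roots $\alpha_1,\alpha_2$, hence a $\widetilde{W}$-increasing string is an increasing sequence $\mu_0<\mu_1<\cdots<\mu_s$ in which consecutive terms are related by alternating applications of $s_1*$ and $s_2*$ (two consecutive equal reflections are impossible in a strictly increasing string, since $s_i*s_i*\lambda=\lambda$). So the string is determined by its starting term and by the alternating word $\ldots s_2 s_1$ or $\ldots s_1 s_2$; the only thing to control is how long such an alternating word can keep strictly raising a weight. First I would record, for a weight $\mu=(a_1,a_2,a_3)$, the criterion~(\ref{eqsucc}) governing when $\mu<s_i*\mu$, $\mu=s_i*\mu$, $\mu>s_i*\mu$, expressed entirely through the coordinates $a_i,a_{i+1}$.

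Next I would argue by bounding the length combinatorially. The key observation is that applying $s_i*$ strictly increases $\mu$ only under the condition $a_{i+1}-a_i\in\Z_{>0}$ and $(a_i,a_{i+1})\neq(-\tfrac12,\tfrac12)$, so each genuine upward step transfers weight "to the right" in a way that cannot repeat indefinitely. Concretely, I expect to track a monovariant---for instance, the sum of positive coordinate-gaps $\sum_i \max(a_{i+1}-a_i,0)$, or the position of the largest coordinate---and show it strictly decreases (or the configuration strictly simplifies) along an ascending alternating string, so that no more than $4$ strict increases are possible before reaching a $\widetilde{W}$-maximal weight. The cleanest route is probably to enumerate the possible patterns directly: this is exactly what the six orbit types in~\S\ref{exastar} do. Indeed, I would invoke the orbit descriptions computed there: in each of the diagrams the longest upward path has at most four edges, and the unique string realizing length $4$ occurs in case 3) (the orbit of $\lambda=(a,-a,-a-1)$ with $2a+1\in\Z_{>0}$), where the top of the length-$4$ string is $\lambda$ itself, which is regular.

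For the second assertion---that the maximal element of a length-$4$ string is regular---I would isolate which orbits admit a string of length $4$ at all. Scanning the six types, only cases 2) and 3) reach height $4$. In case 2) the top is a regular $\widetilde{W}$-maximal weight $\lambda$ by hypothesis. In case 3) there are two maximal weights, $\lambda$ (regular) and $\lambda+\alpha_1$ (singular), but the length-$4$ string from $s_1s_2s_1*\lambda$ terminates at the \emph{regular} weight $\lambda$, while the string ending at the singular $\lambda+\alpha_1$ has length only $3$. Thus in every instance a string attaining the maximal length $4$ must top out at a regular weight, giving the claim.

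The main obstacle I anticipate is justifying that the enumeration in~\S\ref{exastar} is genuinely exhaustive---i.e.\ that every integral $\widetilde{W}$-maximal weight for $\gq(3)$ falls into one of the six listed orbit types---rather than just illustrative. Rather than re-derive this, I would treat the orbit classification as established in~\S\ref{exastar} (whose derivation is in turn said to rely on this very lemma, so care is needed to avoid circularity); the honest self-contained argument is the coordinate bound: one shows directly from~(\ref{eqsucc}) that after at most four strict raising steps in an alternating word the resulting weight satisfies the $\widetilde{W}$-maximality condition stated at the end of~\S\ref{eqsucc}, and that whenever four raising steps are possible the terminal weight automatically meets the regularity condition $s_i*\lambda\neq\lambda$ for both $i$. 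Making that coordinate bookkeeping fully rigorous---tracking the $\pm\tfrac12$ exceptional cases that distinguish the singular maximal weights---is the delicate part, but it is a finite case check rather than a conceptual difficulty.
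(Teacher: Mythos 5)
Your proposal has a genuine gap, and you half-diagnose it yourself. The primary route — reading the bound off the six orbit diagrams — is circular: the paper states explicitly in \S\ref{exastar} that the classification into six types of integral $\widetilde{W}$-orbits is obtained \emph{using} Lemma~\ref{sectionq3}, so those diagrams cannot serve as input to its proof. Your declared escape, ``the honest self-contained argument is the coordinate bound,'' is left entirely unexecuted: no monovariant is defined and verified, no case analysis is performed. Moreover, the specific candidate you name, $\sum_i\max(a_{i+1}-a_i,0)$, does not strictly decrease along raising steps: for the plain swap $(0,1,2)<s_1*(0,1,2)=(1,0,2)$ it stays equal to $2$, and the genuine length-$4$ string $(0,1,2)<(1,0,2)<(1,2,0)<(2,1,0)$ shows that non-dot steps can preserve it indefinitely as a quantity. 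So the soft combinatorial shortcut you hope for does not exist in the proposed form, and what remains of your argument is essentially a restatement of the lemma. Two smaller inaccuracies point the same way: a length-$4$ string has three edges and three strict raising steps, not four (an off-by-one that matters in a statement about an exact bound), and the diagrams of \S\ref{exastar} cover only \emph{integral} weights, so even granting them you would still owe the observation that a length-$4$ string forces integrality of its members.

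For comparison, the paper's proof is short and concrete. Since consecutive reflections in a strictly increasing string must alternate (as you correctly observe), and the top $\mu_3=s_1s_2*\lambda$ of a length-$4$ string whose last step is $s_1$ satisfies $s_1*\mu_3<\mu_3$ automatically, the whole lemma reduces (up to the symmetry $s_1\leftrightarrow s_2$, handled by the automorphism $\iota$ of \S\ref{iota}) to the non-existence of $\lambda$ with $s_1*\lambda<\lambda<s_2*\lambda<s_1s_2*\lambda\leq s_2s_1s_2*\lambda$; note how the single non-strict inequality simultaneously encodes ``no string of length $5$'' and ``the top of a length-$4$ string is not singular.'' This is then refuted by a direct computation with $\lambda=(a,b,c)$ via (\ref{eqsucc}), split into the two cases $(\lambda,\ol{\alpha_2})\neq 0$ and $(\lambda,\ol{\alpha_2})=0$, the second requiring care with the dot-shift and the exceptional pair $(-\frac12,\frac12)$. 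That half-page of bookkeeping is the entire content of the lemma; your proposal correctly locates where it must go but does not supply it.
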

\begin{proof}
The assertion is equivalent to the following claim:
there is no $\lambda$ satisfying
$$s_1*\lambda<\lambda<s_2*\lambda<s_1s_2*\lambda\leq s_2s_1s_2*\lambda.$$

Assume that $\lambda=(a,b,c)$ satisfies the above inequalities.
By~(\ref{eqsucc}) one has $a\succ b$.

Consider the case $(\lambda,\ol{\alpha_2})\not=0$. Then $s_2*\lambda=(a,c,b)$ and
$s_1s_2*\lambda$ is either $(c,a,b)$ or $(c-1,a+1,b)$.
Since $a\succ b$ one has $a+1>b$ and
so in both cases $s_1s_2*\lambda>s_2s_1s_2*\lambda$,  a contradiction.

Consider the remaining case $(\lambda,\ol{\alpha_2})=0$ that is $b=-c$ and so $a\succ -c$.
Then $s_2*\lambda=(a,c-1,1-c)$. The inequality $s_2*\lambda>\lambda$ gives
$c\geq 1$ and thus the inequality
$a\succ -c$ gives $a+c\in\mathbb{Z}_{>0}$. If $(s_2*\lambda,\ol{\alpha_1})=0$, then
$s_1 s_2*\lambda=(c-2,2-c,1-c)$ so $s_1s_2*\lambda>s_2s_1s_2*\lambda$, a contradiction.
If $(s_2*\lambda,\ol{\alpha_1})\not=0$, then $a\not=1-c$ so $a+c \in\mathbb{Z}_{>1}$. One has
$s_1 s_2*\lambda=(c-1,a,1-c)$ and $a+c \in\mathbb{Z}_{>1}$ gives
$s_1 s_2*\lambda>s_2s_1s_2*\lambda$, a contradiction.
\end{proof}

\subsection{Remark} \label{sing_arbitrary}
In contrast to the Lie algebra case, the notions of ``regularity'' and ``singularity''
are not well-defined for arbitrary integral weight, since the same $\widetilde{W}$-orbit
can contain a regular and a singular  $\widetilde{W}$-maximal weights, see the above example
 for $\gq(3)$ (type 3)).

However the notions of ``regularity'' and ``singularity'' can be defined for the bounded weights.
 Indeed, as we will see in~\Cor{corsal} (iii) below, similarly to the $\ggl_n$-case,
each bounded weight $\lambda'$ lies in at most two $\widetilde{W}$-strings, and the maximal element
in these strings is the same. Thus we can say that a bounded weight $\lambda'$
is regular (resp., singular) if $\lambda$ is regular (resp., singular).

\subsection{}
\begin{prop}{propW*max}
For each $\lambda$ the length of all increasing chains
$$\lambda<s_{i_1}*\lambda<s_{i_2}s_{i_1}*\lambda<
\ldots<s_{i_k}s_{i_{k-1}}\ldots s_{i_1}*\lambda$$
is uniformly bounded. In particular, $W * \lambda$ contains a maximal element $w*\lambda$
such that $\lambda \leq w* \lambda$.
\end{prop}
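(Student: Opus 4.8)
The plan is to produce two explicit functionals on $\gh^*$ that together force every increasing $\widetilde W$-string starting at $\lambda$ to terminate after boundedly many steps. Write a weight as $\mu=\sum_i a_i\vareps_i$. By~(\ref{eqsucc}) a single ascending step $\mu<s_j*\mu$ occurs exactly when $a_{j+1}-a_j\in\Z_{>0}$ and $(a_j,a_{j+1})\neq(-\tfrac12,\tfrac12)$, and I would split this into two cases. In the \emph{typical} case $a_j+a_{j+1}\neq 0$ one has $s_j*\mu=s_j\mu$, which simply transposes the entries $a_j$ and $a_{j+1}$. In the \emph{atypical} case $a_j+a_{j+1}=0$ one has $s_j*\mu=s_j\cdot\mu$, which replaces the pair $(a_j,-a_j)$ by $(-a_j-1,\,a_j+1)$; here the ascending condition forces $-2a_j\in\Z_{>0}$ together with $a_j\neq-\tfrac12$, hence $a_j\le-1$.

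First I would track $S(\mu):=\sum_i a_i^2$. It is unchanged under a typical step (a transposition), while under an atypical step a direct computation gives $S(s_j*\mu)-S(\mu)=4a_j+2\le-2$. Thus $S$ is non-increasing along any increasing string, so at every stage $a_i^2\le S(\mu)\le S(\lambda)$ for all $i$; in particular every coordinate that ever appears lies in the fixed bounded set $\{x\in\C: x^2\le S(\lambda)\}$.

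Next I would track $P(\mu):=(\mu,\rho)=\sum_i\rho_i a_i$, using $\rho_i-\rho_{i+1}=1$. A short calculation shows that an ascending step raises $P$ by $a_{j+1}-a_j$ in the typical case and by $-(2a_j+1)$ in the atypical case. Since $a_{j+1}-a_j\in\Z_{>0}$, and in the atypical case $2a_j\in\Z$ with $a_j\le-1$, in both cases the increment is a \emph{positive integer}. Hence along any increasing string the value of $P$ stays in the single coset $(\lambda,\rho)+\Z$ and strictly increases by at least $1$ at each step. On the other hand the coordinate bound from the previous paragraph yields $|P(\mu)|\le(\max_i|\rho_i|)\sum_i|a_i|\le\tfrac{n(n-1)}2\sqrt{S(\lambda)}=:M$, a constant depending only on $\lambda$. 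Therefore no increasing string starting at $\lambda$ can have more than $M-(\lambda,\rho)$ steps, which is the desired uniform bound. Prolonging any string maximally then produces a weight $w*\lambda$ admitting no further ascending step, i.e.\ a $\widetilde W$-maximal weight with $\lambda\le w*\lambda$.

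The only delicate point is the bookkeeping in the atypical case: one must verify that the ascending condition genuinely forces $a_j\le-1$ (so that the $S$-decrement is negative and the $P$-increment positive) and that the $P$-increment $-(2a_j+1)$ is an honest \emph{integer} even when the $a_i$ are half-integers. It is precisely the integrality of all increments, combined with the a priori upper bound on $P$ coming from $S$, that makes the strictly increasing integer-shifted quantity $P$ terminate; the remaining content is the routine verification of the two coordinate computations for $S$ and $P$.
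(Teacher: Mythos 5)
Your two local computations are correct: an ascending typical step transposes $(a_j,a_{j+1})$ with $a_{j+1}-a_j\in\Z_{>0}$; an ascending atypical step sends $(a_j,-a_j)$ to $(-a_j-1,a_j+1)$ and does force $2a_j\in\Z$, $a_j\le -1$; the resulting increments of $P(\mu)=(\mu,\rho)$ are indeed positive integers, and $S(\mu)=\sum_i a_i^2$ changes by $0$ or by $4a_j+2\le-2$. The genuine gap is that the proposition is asserted for arbitrary $\lambda\in\gh^*$, i.e.\ with complex coordinates, and for non-real coordinates your a priori bound collapses. Non-real coordinates do participate in ascending (typical) steps --- for $n=2$ the weight $\lambda=(it,\,it+1)$ with $t\in\R$, $t\neq 0$, satisfies $\lambda<s_1*\lambda$ --- and then $S(\lambda)$ is a complex number: ``$S$ is non-increasing'', ``$a_i^2\le S(\mu)\le S(\lambda)$'' and the set $\{x\in\C: x^2\le S(\lambda)\}$ are not meaningful, and no estimate of the form $|a_i|\le\sqrt{S(\lambda)}$ can hold (for $a=it$ the term $a^2=-t^2$ drags the sum of squares down, not up). Since the entire mechanism of your proof is the a priori bound $|P(\mu)|\le M$ against which the integer-increasing functional $P$ must terminate, the argument as written establishes the statement only for weights with real coordinates.

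The gap is fixable by an observation already implicit in your case analysis: an atypical step requires $a_{j+1}-a_j=2a_{j+1}\in\Z_{>0}$, so only \emph{real} coordinates are ever altered in value, while typical steps merely permute the multiset of coordinates; hence along any increasing string the multiset of non-real coordinates is constant. You should therefore run the $S$-argument with $S_{\R}(\mu):=\sum_{a_i\in\R}a_i^2$, which is honestly real and nonnegative, invariant under typical steps, and drops by at least $2$ at each atypical step; the non-real coordinates then contribute to $P(\mu)$ at most a constant depending only on $\lambda$, and since the increments of $P$ are positive integers you finish by comparing real parts ($\operatorname{Re}P$ increases by at least $1$ per step and is bounded above). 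With this patch your proof is complete and is genuinely different bookkeeping from the paper's: there the authors count dot-type steps via $\sneg$ (the sum of the negative coordinates, which increases by exactly $1$ at each dot-type step, so there are at most $-\sneg_0$ of them) and bound each run of consecutive plain steps by the length $N=n(n-1)/2$ of the longest element of the symmetric group, obtaining total length at most $-(N+1)\sneg_0$. Your single strictly increasing functional $P$ treats typical and atypical steps uniformly and avoids the two-scale count, which is arguably cleaner; the price is exactly the reality issue above, which the paper's $\sneg$ sidesteps since only real coordinates can be negative or take part in dot-type steps.
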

\begin{proof}
Fix an increasing chain
$$\lambda=\lambda_0<\lambda_1<\ldots <\ \lambda_m=s_{i_m}*\lambda_{m-1}.$$
For each $k=0,\ldots,m$ let $\sneg_k$ be the sum of the negative coordinates of $\lambda_k$:
$\sneg_k:=0$ if $(\lambda_k,\vareps_i)\geq 0$ for every $i=1,\ldots,n-1$ and
$$\sneg_k:=\sum_{i: (\lambda_k,\vareps_i)<0}(\lambda_k,\vareps_i)$$
otherwise.

Call the index $k$ to be dot-type if
$\lambda_k=s_{i_k} \cdot \lambda_{k-1}$ (otherwise $\lambda_k=s_{i_k}\lambda_{k-1}$).
If $k$ is not a dot-type, $s_{i_k}$ acts as a permutation on the coordinates
$(\lambda,\vareps_i)$, so $\sneg_k=\sneg_{k-1}$.
Let $k$ be a dot-type. Then $(\lambda_{k-1},\vareps_{i_k-1})=-a, (\lambda_{k-1},\vareps_{i_k})=a$
for $a\geq 1$ and $(\lambda_{k},\vareps_{i_k-1})=a-1, (\lambda_{k},\vareps_{i_k})=-a+1$.
Thus $\sneg_k=\sneg_{k-1}+1$ if $k$ is a dot-type. Moreover,
if $\sneg_k=0$, then $i$ is not a dot-type for each $i\geq k$.
We conclude that the number of dot-type $k$'s is at most $- \sneg_0$.
Let $N$ be the length of the longest element in the Weyl group $S_n$, i.e. $N=\frac{n(n-1)}{2}$.
There are not  more than $N$ consecutive indices which are not of dot-type: for any $k$
there exists $i$ such that $k\leq i\leq k+N$ and $i$ is a dot-type. We conclude
that the length of any increasing chain is at most $- (N+1)\sneg_0$.
\end{proof}

\subsection{Automorphism $\iota$}
\label{iota}
The diagram automorphism of $\ggl_n$ gives rise to an automorphism $\iota$
of $\gq(n)$. This automorphism stabilizes the Cartan algebra, and the subalgebras $\fn$ and $\fn_-$.
The induced action on $\fh^*$ is an involution given by
$$\iota(\vareps_i):=-\vareps_{n-i}$$
so $\iota(\alpha_i)=\alpha_{n-i},\ \ \iota(\ol{\alpha}_i)=-\ol{\alpha}_{n-i}$.
Note that
$$\iota(s_k*\lambda)=s_{n-k}*\iota(\lambda).$$
In particular, $\iota$ preserves integrality and $\widetilde{W}$-maximality of weights.

The automorphism $\iota$ induces a twisted action on modules and
$L(\lambda)^{\iota}=L(\iota(\lambda))$. Since $\iota$ stabilizes $\fh$, $\lambda$ is bounded
if and only if $\iota(\lambda)$ is bounded. Summarizing, we obtain
$$\prod_{j=i}^k s_j*\lambda \text{ is bounded }\ \Longleftrightarrow\
\prod_{j=n-i}^{n-k} s_j*\iota(\lambda)  \text{ is bounded. }$$

\subsection{}
With the aid of the star action we may generalize the results in \ref{q2} to
$\gq(n)$ and include them in the following proposition.

\subsubsection{} \label{sec_arrow}
\begin{prop}{propqloc}  Let $\alpha$ be a simple root and $\lambda$ be a weight for which
$\lambda \not> s_{\alpha}* \lambda$ (equivalently, $\lambda_i \not\succ \lambda_{i+1}$
if $\alpha = \vareps_i - \vareps_{i+1}$). Then  $f_{\alpha}$
acts injectively on $L(\lambda)$ and  $L (s_{\alpha}  * \lambda )$ is a subquotient of
$\cD_{\alpha}^{(\lambda, \alpha)} L(\lambda)$. In particular, if $\lambda$ is $\alpha$--integral
with $\lambda \leq s_{\alpha} * \lambda$, then $L (s_{\alpha} * \lambda )$ is a subquotient of
$\cD_{\alpha} L(\lambda)$.
\end{prop}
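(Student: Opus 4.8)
The plan is to treat the two assertions separately, reducing the subquotient statement to the $\gq(2)$-computations already carried out in \S\ref{q2}. \emph{Injectivity} is immediate: by the definition of the partial order on $\gh^*_{\ol0}$ the hypothesis $\lambda\not>s_\alpha*\lambda$ is literally the statement $s_\alpha*\lambda\not<\lambda$, which by \S\ref{stariii}(i) is exactly the condition that $f_\alpha$ act injectively on $L(\lambda)$. Hence $L(\lambda)$ embeds in $\cD_\alpha L(\lambda)$ and the twisted localization $\cD_\alpha^{(\lambda,\alpha)}L(\lambda)=\Phi_\alpha^{(\lambda,\alpha)}(\cD_\alpha L(\lambda))$ is well defined. (If $\lambda=s_\alpha*\lambda$ the second assertion is trivial, so I will assume $\lambda<s_\alpha*\lambda$ or $\lambda,s_\alpha*\lambda$ incomparable.)

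\emph{Reduction to $\gq(2)$.} Write $\alpha=\vareps_i-\vareps_{i+1}$ and let $\gg^{(\alpha)}\cong\gq(2)$ be the subalgebra generated by $\gg^{\pm\alpha}$ and the corresponding Cartan elements. The functors $\cD_\alpha$ and $\Phi_\alpha^x$ are built solely from $f_\alpha$ (\S\ref{loc-functor}, \S\ref{gencon}), so they restrict to the localization and twist for $\gg^{(\alpha)}$. Let $H_\lambda\subseteq L(\lambda)^\lambda$ be the highest weight space and $V\subseteq L(\lambda)$ the $\gg^{(\alpha)}$-submodule it generates. Applying the $\gq(2)$-analysis of \S\ref{q2} to $V$ — the integral typical case \S\ref{gl2-q2-int}, the integral atypical case, or the nonintegral case, according to whether $(\lambda,\ol\alpha)=0$ and whether $\lambda$ is $\alpha$-integral — and using that $f_\alpha$ is injective, one obtains a nonzero weight vector $w$ of weight $s_\alpha*\lambda$ inside $\cD_\alpha^{(\lambda,\alpha)}L(\lambda)$ which is $\gg^{(\alpha)}$-singular (in the atypical case only modulo a $\gg^{(\alpha)}$-submodule) and whose $\gg^{(\alpha)}$-module has $L^{\gq(2)}(s_\alpha*\lambda)$ as a subquotient. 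The twist parameter $(\lambda,\alpha)$ is the one recorded in the nonintegral subsection of \S\ref{q2}; in the integral case it is an integer, so by \S\ref{lmnew}(i) the twist is trivial and one works inside $\cD_\alpha L(\lambda)$, which yields the ``in particular'' clause.

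\emph{Promotion to $\gq(n)$.} By the PBW theorem for $\cD_\alpha U(\gg^{(\alpha)})$ together with $e_\alpha H_\lambda=E_\alpha H_\lambda=0$, every vector of the $\gg^{(\alpha)}$-module generated by $H_\lambda$ lies in $\C[f_\alpha^{\pm1},F_\alpha]\cdot H_\lambda$; in particular $w$ does. Now for each simple root $\beta\neq\alpha$ the weight $\beta-\alpha$ is \emph{not} a root, so $[e_\beta,f_\alpha]=[E_\beta,f_\alpha]=[e_\beta,F_\alpha]=[E_\beta,F_\alpha]=0$, and both $e_\beta$ and $E_\beta$ annihilate $H_\lambda$ since $H_\lambda$ is killed by $\gn^+$. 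Hence $e_\beta w=E_\beta w=0$ exactly. As $\gn^+$ is generated by the simple root vectors $e_1,E_1,\ldots,e_{n-1},E_{n-1}$, and the $\alpha$-direction is covered by the $\gq(2)$-analysis, $w$ is $\gn^+$-singular (modulo the $\gg^{(\alpha)}$-submodule in the atypical case). Therefore $w$ generates a $\gq(n)$-highest weight module of highest weight $s_\alpha*\lambda$, whose simple quotient $L(s_\alpha*\lambda)$ is a subquotient of $\cD_\alpha^{(\lambda,\alpha)}L(\lambda)$.

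\emph{Main obstacle.} I expect the delicate step to be the reduction to $\gq(2)$: one must produce $w$ with exactly the weight $s_\alpha*\lambda$ and exactly the twist $(\lambda,\alpha)$ uniformly across the typical, atypical and nonintegral trichotomy, and in the integral atypical case argue that, although $w$ is $\gg^{(\alpha)}$-singular only modulo a $\gg^{(\alpha)}$-submodule, it still generates — modulo that submodule, which the commutation computation above shows is $\gn^+$-stable — a $\gq(n)$-highest weight module with $L(s_\alpha*\lambda)$ on top. By contrast, once the containment $w\in\C[f_\alpha^{\pm1},F_\alpha]H_\lambda$ is known, the promotion to a genuine $\gq(n)$-singular vector is immediate from the vanishing of the four commutators above, so the odd generators $E_\beta$ cause no difficulty despite failing to commute with $e_\alpha,E_\alpha$.
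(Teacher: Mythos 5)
Your proposal is correct and takes essentially the same route as the paper: the paper's entire ``proof'' of this proposition is the remark preceding it that the $\gq(2)$ computations of \S\ref{q2} (typical integral, atypical integral, nonintegral, with the twist $(\lambda,\alpha)$) generalize to $\gq(n)$, which is precisely your reduction to the subalgebra $\gg^{(\alpha)}\cong\gq(2)$ generated by $\gg^{\pm\alpha}$. Your promotion step — $w\in\C[f_\alpha^{\pm 1},F_\alpha]\cdot H_\lambda$ together with the vanishing of $[e_\beta,f_\alpha]$, $[E_\beta,f_\alpha]$, $[e_\beta,F_\alpha]$, $[E_\beta,F_\alpha]$ for simple $\beta\neq\alpha$ — merely supplies the routine detail the paper leaves implicit, so there is nothing to correct.
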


In view of the above proposition we will connect two weight $\mu$ and $\lambda$ by
an $i$-arrow $\mu \xrightarrow{i} \lambda$ if $\cD_{\alpha}^{(\lambda, \alpha)} L(\lambda)$
has a subquotient isomorphic to $L(\mu)$ for $\alpha = \vareps_i - \vareps_{i+1}$. In particular,  $\lambda \xrightarrow{i} s_{\alpha}* \lambda$ if $\lambda > s_{\alpha}* \lambda$ and $
\xymatrix{s_1* (0,0,0) \ar@<0.5ex>[r]^2 & s_2* (0,0,0) \ar@<0.5ex>[l]^1}$ (see \S \ref{q3-loc}).

\subsubsection{}
\begin{cor}{corsal} Let $\lambda$ be a bounded weight. Then

(i) $\lambda$ is $\ggl_n$-bounded;

(ii) if $s_i*\lambda > \lambda$, then
$s_i*\lambda$ is also bounded;

(iii) if $\lambda$ is integral, then
$\#\{i: \lambda\leq s_i*\lambda \}\leq 1$;

(iv) if $\lambda$ is nonintegral, then the set
$\{i: \lambda\not> s_i*\lambda \}$  coincides with the set
$\{i: (\lambda,\vareps_i-\vareps_{i+1})\not\in\mathbb{Z}\}$ and equals either $\{j,j+1\}$,
or $\{1\}$, or  $\{n-1\}$.

\end{cor}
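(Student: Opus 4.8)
The plan is to prove each of the four parts of $\Cor{corsal}$ by exploiting the localization machinery and the combinatorial description of the $*$-action, reducing as much as possible to the $\ggl_n$-statements already established in $\Prop{thmgl}$ and $\Lem{lemgl3}$.

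\textbf{Part (i).} Since $L(\lambda)$ is by definition bounded as a $\ggl_n$-module (see \S\ref{def-qn}), and since $L(\lambda)$ surjects onto its highest weight $\ggl_n$-subquotients, the first task is to show $\dot L(\lambda)$ is bounded. The natural approach is to observe that $\dot L(\lambda)$ occurs as a $\ggl_n$-subquotient of $L(\lambda)$ (its top layer), so any uniform bound on the weight multiplicities of $L(\lambda)$ immediately bounds those of $\dot L(\lambda)$. This part should be essentially immediate from the definitions.

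\textbf{Parts (ii), (iii).} For (ii), I would invoke $\Prop{propqloc}$ directly: the hypothesis $s_i*\lambda>\lambda$ gives that $L(s_i*\lambda)$ is a subquotient of $\cD_{\alpha_i}L(\lambda)$, and since localization of a bounded module is bounded (the localization functor does not increase weight multiplicities, as used in \S\ref{uniquestringgl}), $s_i*\lambda$ is bounded. For (iii), the key is to pass to $\ggl_n$: by property $\stariii$(i), $\lambda\leq s_i*\lambda$ is equivalent to $f_{\alpha_i}$ acting injectively on $L(\lambda)$, which forces $f_{\alpha_i}$ to act injectively on the $\ggl_n$-subquotient $\dot L(\lambda)$ as well. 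By part (i), $\lambda$ is a $\ggl_n$-bounded integral weight, so $\Lem{lemgl3}$(i) yields at most one such simple root, giving the bound $\#\{i:\lambda\leq s_i*\lambda\}\leq 1$. I expect this to be the cleanest of the integral cases.

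\textbf{Part (iv).} This is where the main obstacle lies. The first equality, that $\{i:\lambda\not>s_i*\lambda\}=\{i:(\lambda,\alpha_i)\notin\Z\}$, should follow from the characterization in~(\ref{eqsucc}): for nonintegral $\lambda$ one checks that the ``$\not\succ$'' condition on coordinates reduces to $a_{i+1}-a_i\notin\Z_{\geq 0}$ sharpening to $(\lambda,\alpha_i)\notin\Z$, since the atypical exceptional case $(-\tfrac12,\tfrac12)$ is itself nonintegral and must be handled separately. The harder claim is that this set equals $\{j,j+1\}$, $\{1\}$, or $\{n-1\}$. Here I would again descend to $\ggl_n$: by (i) and $\stariii$(i) the relevant set coincides with the set of simple roots on which $f_\alpha$ acts injectively on $\dot L(\lambda)$, and $\Lem{lemgl3}$(ii) says there are at most two such roots $\alpha,\beta$, which must then be \emph{adjacent} simple roots (since $(\alpha,\beta)\neq 0$ by part (a) of that lemma). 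Thus two injective roots force $\{j,j+1\}$ for some $j$. The remaining work is to rule out an isolated interior root, i.e. to show that if exactly one root is nonintegral it must be $\alpha_1$ or $\alpha_{n-1}$; this is the delicate point and I would argue it using the shadow description of $\Prop{prop1}$(ii) together with $\Defn{def-gl-type}$, which dictates that the nonintegral bounded types are precisely $1$, $(k,k-1)$, and $n-1$, matching exactly the three allowed possibilities. The main obstacle is reconciling the $*$-action bookkeeping with the $\ggl_n$-shadow classification; once that dictionary is set up, the case analysis closes.
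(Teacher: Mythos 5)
Your parts (i) and (ii) are correct and coincide with the paper's proof ($\dot L(\lambda)$ is a $\ggl_n$-quotient of $L(\lambda)$, and \S\ref{stariii}(ii) plus exactness of localization). The genuine gap is in (iii) and (iv): you transfer injectivity of $f_{\alpha_i}$ from $L(\lambda)$ to $\dot L(\lambda)$, but $\dot L(\lambda)$ is a $\ggl_n$-\emph{quotient} of $L(\lambda)$, and injectivity of a root vector passes to submodules, not to quotients or general subquotients. The transfer is in fact false, not merely unjustified: if $(\lambda,\alpha_i)=0$ and $(\lambda,\ol{\alpha_i})\neq 0$, then $s_i*\lambda=s_i\lambda=\lambda$, so $f_i$ acts injectively on $L(\lambda)$ by \S\ref{stariii}(i), while $s_i\cdot\lambda<\lambda$ means $f_i$ acts locally finitely on $\dot L(\lambda)$. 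Such indices do occur for bounded weights (the singular case of \Thm{thmq}), and they belong to the set $\{i:\lambda\leq s_i*\lambda\}$ that (iii) counts; your chain ``$\lambda\leq s_i*\lambda\Rightarrow f_i$ injective on $L(\lambda)\Rightarrow f_i$ injective on $\dot L(\lambda)$'' breaks at the second arrow exactly there, so \Lem{lemgl3}(i) applied to $\dot L(\lambda)$ cannot deliver the bound.

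The paper's fix, which your proposal needs, is to take a \emph{simple $\ggl_n$-submodule} $\dot L(\mu)\subset L(\lambda)$ (it exists because $L(\lambda)$ has finite $\ggl_n$-length); then $\mu$ is $\ggl_n$-bounded, injectivity on $L(\lambda)$ restricts honestly to $\dot L(\mu)$, and since $\lambda-\mu\in Q^+$ the weight $\mu$ is integral (resp., nonintegral) together with $\lambda$, so \Lem{lemgl3} applies to $\mu$ and gives (iii) at once. For (iv) the same device produces the sandwich
$$\{i: f_i \text{ inj.\ on } L(\lambda)\}\subseteq \{i: f_i \text{ inj.\ on } \dot L(\mu)\}
= \{i:(\mu,\alpha_i)\not\in\Z\}=\{i:(\lambda,\alpha_i)\not\in\Z\}\subseteq \{i: f_i \text{ inj.\ on } L(\lambda)\},$$
forcing equality throughout. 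Note that the identification $\{i:\lambda\not>s_i*\lambda\}=\{i:(\lambda,\alpha_i)\not\in\Z\}$ is representation-theoretic and does not ``follow from~(\ref{eqsucc})'' as you propose: for instance $a_i=a_{i+1}\neq 0$ gives $\lambda\not>s_i*\lambda$ while $(\lambda,\alpha_i)=0\in\Z$, and only boundedness (via the submodule and \Lem{lemgl3}(ii)(b)) excludes such indices. Your last step for (iv) — invoking the shadow classification of \Prop{prop1}(ii) to pin the set down to $\{1\}$, $\{n-1\}$, or an adjacent pair $\{j,j+1\}$ — is sound and is what the paper's terse ``combining with \Lem{lemgl3}(ii)'' amounts to, since \Lem{lemgl3}(ii) alone only bounds the set by two adjacent roots and does not by itself exclude an isolated interior root.
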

\begin{proof}
(i) follows from the fact that $\dot{L}(\lambda)$ is a $\ggl_n$-quotient of $L(\lambda)$;
(ii) follows from \S~\ref{stariii} (ii).

For (iii), (iv)  let $\dot{L}(\mu)$ be a $\ggl_n$-submodule of $L(\lambda)$.
Since $L(\lambda)$ is bounded,  $\dot{L}(\mu)$ is $\ggl_n$-bounded.
By Section~\ref{stariii} (i), $\lambda\not>s_i*\lambda$ implies that
$f_i$ acts injectively on $L(\lambda)$ and thus on  $\dot{L}(\mu)$. Now~\Lem{lemgl3} (i)
implies (iii). For (iv) note that
$\lambda-\mu$ is integral. By~\Lem{lemgl3} (ii) we obtain
$$\begin{array}{l}
\{i: f_i \text{ acts inj. on } L(\lambda)\}\subseteq  \{i: f_i \text{ acts inj. on } \dot{L}(\mu)\}
= \{i: (\mu,\vareps_i-\vareps_{i+1})\not\in\mathbb{Z}\}=\\
=
\{i: (\lambda,\vareps_i-\vareps_{i+1})\not\in\mathbb{Z}\}\subseteq \{i: f_i \text{ acts inj. on } L(\lambda)\}.
\end{array}$$
Combining with~\Lem{lemgl3} (ii) we obtain (iv).
\end{proof}

\section{Description of  bounded integral weights for $\gq (n)$} \label{descr_bounded}

As in the $\ggl_n$-case (see Section~\ref{uniquestringgl}),
the integral bounded weights can be described by the following theorem.

\subsection{}
\begin{thm}{thmq0}
An integral weight $\mu$ is bounded if and only if

(i) there exists a unique increasing $\widetilde{W}$-string $\mu=\mu_0<\mu_1<\mu_2<\ldots<\mu_s$;

(ii) the set $\{i: s_i*\mu_j=\mu_j\}$ is empty for  $j<s$ and has cardinality at most one for $j=s$.
\end{thm}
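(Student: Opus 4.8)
The plan is to mirror the structure of the $\ggl_n$-argument in Section~\ref{uniquestringgl}, replacing the dot action by the star action throughout and supplying the two queer-specific ingredients established earlier: the localization statement~\Prop{propqloc} (equivalently property~\S\ref{stariii}(ii)) and the injectivity/boundedness control from~\Cor{corsal}. I would first dispose of the forward implication. Suppose $\mu$ is bounded. Since the $*$-action satisfies the analogue~\S\ref{stariii}(i) of property~\ref{dotiii}(i), the indices $i$ with $\mu\not> s_i*\mu$ are exactly those for which $f_i$ acts injectively on $L(\mu)$; by~\Cor{corsal}(iii) at most one such index can satisfy $\mu\leq s_i*\mu$ strictly or with equality. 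Combined with the existence of a $\widetilde{W}$-increasing string from~\Prop{propW*max} (property~\S\ref{stariii}(iv)), this yields that from $\mu$ there is a string $\mu=\mu_0<\mu_1<\cdots<\mu_s$, and~\Cor{corsal}(ii) guarantees each $\mu_j$ is again bounded, so the same cardinality bound applies at every step. This gives uniqueness in (i): two distinct ascending steps from some $\mu_j$ with $j<s$ would force two indices $i$ with $\mu_j<s_i*\mu_j$, contradicting~\Cor{corsal}(iii); and it gives (ii), since $\{i:s_i*\mu_j=\mu_j\}$ counts indices with $\mu_j=s_i*\mu_j$ (also an ``injective'' index for $f_i$), which must be empty when $j<s$ because a fixed index there together with the genuine ascending step would again violate the bound, and at the top $j=s$ at most one is allowed.

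For the converse I would argue that conditions (i)--(ii) force $\mu$ into the explicit lists of bounded weights produced by the orbit analysis. The key reduction, exactly as in the proof of~\Prop{thmgl}, is that by~\S\ref{stariii}(ii) boundedness propagates \emph{down} an increasing string: if the maximal element $\mu_s$ is bounded, then localizing repeatedly shows every $\mu_j$, in particular $\mu$, is a subquotient of a localization of a bounded module and hence bounded. So it suffices to show the top weight $\mu_s$, which is $\widetilde{W}$-maximal and (by (ii)) has $\#\{i:s_i*\mu_s=\mu_s\}\leq 1$, is bounded. When $\mu_s$ is regular $\widetilde{W}$-maximal it is finite-dimensional by~\S\ref{stariii}(iii), hence bounded; when it is singular with a single fixed index $s_m$, I would invoke the queer analogue of~\Lem{lemgl2} to produce boundedness of the maximal element directly via a suitable parabolic induction (the module $L(\mu_s)$ being a simple quotient of an induced module whose inducing piece is finite-dimensional because $s_i*\mu_s<\mu_s$ for all $i\neq m$).

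The main obstacle I anticipate is precisely the converse's top-weight step: establishing that a singular $\widetilde{W}$-maximal weight with exactly one fixed simple reflection is bounded, and conversely that two distinct fixed reflections preclude boundedness (the queer analogue of~\Prop{thmgl}(iii)). This is subtler than in the $\ggl_n$-case because the $*$-action interleaves regular and dot reflections, and~\S\ref{exastar} (type 3 and type 5) shows a single $\widetilde{W}$-orbit can contain both a regular and a singular maximal weight, so the ``type'' of the string is not read off from $\mu$ alone. I would handle this by the same $\ggl_3$- and $\gs\gl_3$-reduction used in~\Lem{lemgl3}: realize the obstruction inside a rank-two subalgebra and check directly, using the explicit inequalities~(\ref{eqsucc}) and the $\gq(2)$-decomposition of~\S\ref{q2}, that a second fixed reflection creates a simple Verma (hence unbounded) subquotient. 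The parity/Clifford subtleties recorded in~\Lem{lemcliff} do not affect boundedness, since degree is measured over the $\ggl_n$-structure, so I would not expect them to intervene; the real care is in tracking which reflections are dot-type along the unique string, which~\Prop{propW*max}'s $\operatorname{neg}$-counting argument controls.
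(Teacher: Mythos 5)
Your forward direction is sound and essentially matches the paper's: the ``only if'' part of \Thm{thmq0} is exactly \Cor{corsal}~(iii) combined with \Prop{propW*max} and the upward propagation of boundedness from \Cor{corsal}~(ii), as you argue. The converse, however, rests on a step that is false. You claim that by \S\ref{stariii}~(ii) ``boundedness propagates down an increasing string,'' so that it suffices to check the top weight $\mu_s$. The localization statement goes the other way: if $s_{\alpha}*\lambda>\lambda$, then $L(s_{\alpha}*\lambda)$ --- the \emph{higher} weight --- is a subquotient of $\cD_{\alpha}L(\lambda)$, the localization of the \emph{lower} one; since localization preserves boundedness, boundedness travels \emph{up} the string (this is precisely \Cor{corsal}~(ii), which you use correctly in the forward direction and then invert here). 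Boundedness of the top does not descend: if it did, every weight in the orbit of a regular $\widetilde{W}$-maximal integral $\lambda$ (where $L(\lambda)$ is finite dimensional, hence bounded) would be bounded, contradicting \Thm{thmq} itself --- already for $\ggl_3$ the weight $s_1s_2s_1\cdot\lambda$ below a dominant $\lambda$ is unbounded although $\dot{L}(\lambda)$ is finite dimensional. Your fallback for the singular top via the queer analogue of \Lem{lemgl2} (i.e.\ \Lem{lemqn2}) also fails for a middle fixed index $2\leq m\leq n-2$: the hypothesis $s_i*\lambda<\lambda$ for a consecutive run $i=1,\ldots,n-2$ (or $i=2,\ldots,n-1$) is violated at $i=m$.

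The paper's converse runs in the opposite direction and uses a queer-specific ingredient your plan has no counterpart for. First, \S\ref{proofqq} shows that conditions (i)--(ii) force the string to have the shape $w=\prod_{j=i}^k s_j$ subject to the constraints of \Thm{thmq}; this needs \Lem{sectionq3} and the diamond $s_is_{i+1}*\lambda=s_{i+1}s_i*\lambda$ at interior zeros, not merely the existence statement of \Prop{propW*max}. Then boundedness is verified at the \emph{minimal} weights $\prod_{j=1}^k s_j*\lambda$ by the explicit criterion of \Lem{lemqn2} ($b_2\succ b_3\succ\cdots\succ b_n$, checked coordinatewise via \Lem{eqqq}), and propagated \emph{upward} along the increasing string furnished by the inequality~(\ref{eqs1s2}), with the automorphism $\iota$ handling $i>k$. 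Finally, in case (b) the weights $s_l*\lambda$ with $f(\lambda)<l<f(\lambda)+z(\lambda)-1$ lie on no order-monotone string to a checkable weight at all: their boundedness requires the ``horizontal'' localization of \Lem{q3-loc}, which produces $L(s_2\cdot 0)$ as a subquotient of $\cD_{\alpha_1}L(s_1\cdot 0)$ between two \emph{incomparable} weights. Any repair of your proposal must reorient the reduction to the bottom of the string and supply this lemma; as written, the converse does not go through.
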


The ``only if'' part follows from~\Cor{corsal} (iii); the ``if'' part follows from~\Thm{thmq} below
(see Section~\ref{proofqq}).

\subsection{Notation}
Let $\lambda$ be a regular $\widetilde{W}$-maximal weight $\lambda$. Set
$$z (\lambda) = \# \{ i \; | \; (\lambda, \vareps_i) = 0\}.$$
Let $f(\lambda):=n$ if $z(\lambda)\leq 1$ and $f(\lambda)$ be the minimal
index $i$ such that $(\lambda,\vareps_{i})=0$ if $z(\lambda)\geq 2$.

Observe that for  a singular $\widetilde{W}$-maximal weight $\lambda$
the equality $(\lambda,\vareps_i)=(\lambda,\varepsilon_{i+2})$
does not force $(\lambda,\vareps_i)=(\lambda,\varepsilon_{i+1})$
(for example, $\lambda=\frac{1}{2}\vareps_1-\frac{1}{2}\vareps_2+
\frac{1}{2}\vareps_3$ is $\widetilde{W}$-maximal). However,
$(\lambda,\vareps_i)=(\lambda,\varepsilon_k)=0$ for $k>i$
forces $(\lambda,\vareps_j)=0$ for each $j$ such that
$i\leq j\leq k$.

Writing $\lambda=(a_1,a_2,\ldots,a_n)$ we have
$$a_1>a_2>\ldots> a_{f(\lambda)}$$
and, if $f(\lambda)\not=n$ (or, equivalently, $z(\lambda)\geq 2$), one has
$$0=a_{f(\lambda)}=a_{f(\lambda)+1}=\ldots=a_{f(\lambda)+z(\lambda)-1}>a_{f(\lambda)+z(\lambda)}>\ldots>a_n.$$

\subsection{}

\begin{defn}{def-int-type}
Let $\lambda$ be an integral bounded weight. If $1 \leq k \leq n-1$, we call $\lambda$ a bounded weight of type $k$ if  $\Delta^{\rm{inj}} L (\lambda) = - \Delta_{\gn_k}$ and $\Delta^{\rm{fin}} L (\lambda) =  \Delta_{\gs_k} \sqcup \Delta_{\gn_k} $ (see \S \ref{gl-n-not}).

\end{defn}

\subsection{}
\begin{thm}{thmq}

(i) The integral bounded weights are of the form $\lambda$ or $\prod_{j=i}^k s_j*\lambda$, 
where  $\lambda$ is a $\widetilde{W}$--maximal integral weight such that
$\#\{i| s_i*\lambda=\lambda\}\leq 1$ and the indices $i,k$
satisfy the following conditions:

(a) no conditions for regular $\lambda$ with
$z(\lambda)\leq 2$: there are $(n-1)^2 + 1$ bounded weights for given $\lambda$;

(b) for  regular $\lambda$ with $z(\lambda)\geq 3$ one has either $i=k$ (that is the weight is $s_k*\lambda$),
or $i<k$ and $k\leq f(\lambda)$ or $k\geq f(\lambda)+z(\lambda)-1$, or $i>k$ and
$k\leq f(\lambda)-1$
or $k\geq f(\lambda)+z(\lambda)-2$): there are $(n-1)(n-z(\lambda)+1) + 1$ bounded weights for such $\lambda$;

(c) for singular $\lambda$ the index  $k$ satisfies $s_k*\lambda=\lambda$ (such $k$ is unique by
the assumption on $\lambda$): there are $n-1$  bounded weights for such $\lambda$.

(ii) If $\lambda$ is a $\widetilde{W}$--maximal integral weight and
$\prod_{j=k}^i s_j* \lambda$ is a bounded weight,
then this weight is of type $k$.  In particular, in each case we have
the same number of bounded weights of each type: $n-1$ in the case (a), $n-z(\lambda)+1$ in the case (b),
and $1$ in the case (c).

\end{thm}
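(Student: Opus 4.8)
The plan is to mirror the proof of~\Prop{thmgl}, replacing the dot action by the $*$-action throughout and paying for the extra degeneracies of the $\widetilde W$-orbits with additional local case analysis. For the forward direction of~(i) (every bounded integral weight has the listed form), I start from a bounded integral $\mu$. By~\Prop{propW*max} there is an increasing $\widetilde W$-string $\mu=\mu_0<\mu_1<\cdots<\mu_s=\lambda$ ending at a $\widetilde W$-maximal weight $\lambda$, and by~\S\ref{stariii}(ii) (equivalently~\Cor{corsal}(ii)) every $\mu_j$ on it is again bounded. Now~\Cor{corsal}(iii) shows that each bounded integral weight admits at most one index $i$ with $\mu_j\le s_i*\mu_j$; for the interior terms this single index must be the one producing the strictly larger successor, so the upward reflection is forced at every non-maximal step and the string is essentially determined, while at the top the stabilizer $\{i:s_i*\lambda=\lambda\}$ has cardinality at most one. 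This reproduces the standing hypotheses on $\lambda$ and the necessary conditions (i)--(ii) of~\Thm{thmq0}.

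The main obstacle is to upgrade ``the upward reflection is forced'' to ``the reflections form a consecutive block $\prod_{j=i}^k s_j$'', and then to pin down the admissible ranges of $i,k$. In the $\ggl_n$ case this was the statement that the element $w$ carrying $\lambda$ to the bottom has a unique reduced expression, hence lies in $B=\{1,\,s_is_{i+1}\ldots s_k,\,s_is_{i-1}\ldots s_k\}$, a fact extracted from the monotonicity~\eqref{orderdom}. Since the $*$-action is genuinely not a $W$-action (see~\S\ref{exastar}), no global analogue of~\eqref{orderdom} is available, so I would localize the argument to rank two: using $\gq(3)$-subalgebra considerations together with~\Lem{sectionq3} (the maximal rank-two $\widetilde W$-increasing string has length $4$, with regular top) and the $\ggl_n$-control of~\Lem{lemgl3}, one rules out any string whose reflections skip non-adjacently, forcing the consecutive-block shape. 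The same local analysis detects the influence of zeros: for regular $\lambda$ with $z(\lambda)\ge 3$ the central block of vanishing coordinates obstructs certain reflections, which is exactly the source of the index restrictions in case~(b); cases~(a) and~(c) are the small-$z$ regular and the singular specializations of this computation. Counting the resulting weights, while tracking coincidences $\prod_{j=i}^k s_j*\lambda=\prod_{j=i'}^{k'}s_j*\lambda$ (the collapses caused by equal or vanishing coordinates, already visible in the $\gq(3)$ orbits of types~4 and~6 in~\S\ref{exastar}), yields $(n-1)^2+1$, $(n-1)(n-z(\lambda)+1)+1$, and $n-1$, respectively.

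For the converse half of~(i) I show every listed weight is bounded. Because boundedness propagates upward along an increasing $\widetilde W$-string by~\Cor{corsal}(ii), it suffices to bound the minimal (longest) elements $\prod_{j=1}^k s_j*\lambda$ and $\prod_{j=n-1}^k s_j*\lambda$; by the consecutive-block structure these satisfy $s_i*(\,\cdot\,)<(\,\cdot\,)$ for all $i\ne 1$ (resp.\ all $i\ne n-1$), so they are bounded by a $\gq(n)$-analogue of~\Lem{lemgl2}, realizing $L(\,\cdot\,)$ as a simple quotient of a module parabolically induced from a finite-dimensional module over a Levi subalgebra of $\gq(n)$; alternatively one may exhibit them inside the explicit bounded family ${\mathcal F}_{\boldsymbol{\mu}}$ of~\S\ref{fam_bounded}.

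Finally, for part~(ii) I compute the shadow of $L(\prod_{j=k}^i s_j*\lambda)$. Writing the weight with leftmost reflection $s_k$, the criterion of~\S\ref{stariii}(i) determines on which simple roots $f$ acts injectively, and combined with the $\ggl_n$-shadow description this shows $\Delta^{\rm{inj}}L=-\Delta_{\gn_k}$ and $\Delta^{\rm{fin}}L=\Delta_{\gs_k}\sqcup\Delta_{\gn_k}$, so by~\Defn{def-int-type} the weight has type~$k$; fixing $s_k$ and counting the admissible choices of the second index then gives the per-type tallies $n-1$, $n-z(\lambda)+1$, and $1$. The genuine difficulty throughout is the combinatorics of the $\widetilde W$-orbits: the absence of a braid relation, the occurrence of two $\widetilde W$-maximal weights in a single orbit (type~3 of~\S\ref{exastar}), and the orbit collapses forced by zero coordinates, none of which complicate the $\ggl_n$ picture.
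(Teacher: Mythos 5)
Your architecture coincides with the paper's: for necessity, the increasing string from \Prop{propW*max}, upward propagation of boundedness and the single-injective-index constraint from \Cor{corsal}(ii),(iii), the rank-two exclusion \Lem{sectionq3} forcing the consecutive-block shape, and the zero-block obstruction read off from the $\gq(3)$ orbits; for part (ii), the injectivity criterion of \S\ref{stariii}(i) combined with the shadow description. The gap is in your sufficiency argument. The step ``by the consecutive-block structure these satisfy $s_i*(\,\cdot\,)<(\,\cdot\,)$ for all $i\ne 1$'' is exactly the point where the failure of~(\ref{orderdom}) for the $*$-action must be paid for, and it cannot be obtained from the block shape alone: the paper instead verifies the coordinate inequalities $b_2\succ b_3\succ\cdots\succ b_n$ (its condition~(\ref{eqstep1})) by the Appendix computation \Lem{eqqq} together with a case analysis on the position of the zero block, and your asserted inequality is in fact \emph{false} when $k$ lies strictly inside the zero block --- which is precisely why the restrictions~(\ref{choicek}) on $k$ appear in the first place.

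More seriously, in case (b) the theorem lists $s_l*\lambda$ as bounded for every $l$, including $f(\lambda)<l<f(\lambda)+z(\lambda)-1$, and these weights are unreachable by your reduction. They do not sit above a bounded minimal element: the only candidates one step down are $s_{l\pm 1}s_l*\lambda=s_ls_{l\pm 1}*\lambda$, which the necessity analysis shows are \emph{not} bounded. Nor does \Lem{lemqn2} apply directly: $s_l*\lambda=\lambda-\alpha_l$ has coordinates $(-1,1)$ at positions $(l,l+1)$, and $-1\not\succ 1$, while for $2\le l\le n-2$ both variants of that lemma involve the index $l$. The paper closes this hole with a genuinely new ingredient, \Lem{q3-loc}: an explicit $\gq(3)$ computation showing that $\cD_{\alpha_1}L(s_1\cdot 0)$ has $L(s_2\cdot 0)$ as a subquotient, which transports boundedness from $s_{f(\lambda)}*\lambda$ across the zero block by localization (this is the double arrow of \S\ref{sec_arrow}). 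Your fallback of exhibiting these modules inside ${\mathcal F}_{\boldsymbol{\mu}}$ does not repair this: those modules realize only very special highest weights (as in Section 7), not the general case-(b) weights. Without an ingredient of the \Lem{q3-loc} type, the boundedness of exactly the weights that distinguish case (b) from the naive $\ggl_n$-style count remains unproved.
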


\subsubsection{Examples} \label{q_7}
For $n=3$ the weight $\vareps_1$ is a regular
$\widetilde{W}$-maximal integral weight with $z(\vareps_1)=2$.
The bounded weights apart from $\vareps_1$ are:
$$s_1*\varepsilon_1,\ s_1s_2*\varepsilon_1; s_2*\varepsilon_1,\ s_2s_1*\varepsilon_1.$$
The first two weights are of type $1$, and the last two weights are of type $2$.

For $n=7$, the weight $\lambda=\vareps_1-\vareps_6-2\vareps_7=(1,0,0,0,0,-1,-2)$ is a regular
$\widetilde{W}$-maximal integral weight. One has $z(\lambda)=4, f(\lambda)=2$.
The bounded weights apart from $\lambda$ are given by the following table:

$$\begin{tabular}{|l | l|}
\hline
Type & Bounded weights \\
\hline
$1$ & $s_1*\lambda,\ s_1s_2*\lambda,\ s_1\ldots s_5*\lambda,\ s_1\ldots  s_6*\lambda$ \\
\hline
 $2$ & $s_2*\lambda,\ s_2s_1*\lambda, s_2\ldots  s_5*\lambda,\ s_2\ldots s_6*\lambda$ \\
\hline
$3$ & $s_3*\lambda,\ s_3s_2s_1*\lambda, s_3s_4s_5*\lambda,\ s_3\ldots s_6*\lambda$\\
\hline
$4$ & $s_4*\lambda,\ s_4\ldots s_1*\lambda, s_4s_5*\lambda,\ s_4s_5s_6*\lambda$\\
\hline
$5$ & $s_5*\lambda,\ s_5s_4*\lambda, s_5\ldots s_1*\lambda,\ s_5s_6*\lambda$\\
\hline
$6$ & $s_6*\lambda,\ s_6 \ldots s_1*\lambda, s_6s_5*\lambda,\ s_6s_5s_4*\lambda$\\
\hline
\end{tabular}$$

Recall that
$$s_1s_2*(0,0,0)=s_2s_1*(0,0,0)<s_1*(0,0,0),s_2*(0,0,0).$$
This gives $s_2s_3*\lambda=s_3s_2*\lambda<s_2*\lambda,s_3*\lambda$
and shows that $s_2s_3*\lambda$ is not bounded.

\subsection{Preparation to proof of~\Thm{thmq}}
Our proof of~\Thm{thmq} is more complicated than our proof of~\Prop{thmgl}, since, in contrast to the dot-action, the
 $*$-action is not an action of the Weyl group. Recall that our proof of~\Prop{thmgl} is
based on~\Lem{lemgl2}, \Lem{lemgl3} and the inequality~(\ref{orderdom}).

The $\gq(n)$-version of  \Lem{lemgl2} is  the following.

\subsubsection{}
\begin{lem}{lemqn2}
If $\lambda$ is such that $s_i*\lambda<\lambda$ for $i=1,\ldots,n-2$ (or for $i=2,\ldots,n-1$),
then $L(\lambda)$ is bounded.
\end{lem}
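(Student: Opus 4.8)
The plan is to mirror the proof of~\Lem{lemgl2}, replacing the Levi subalgebra $\ggl_{n-1}\times\ggl_1$ by its queer analogue and carrying out a PBW count that now also involves odd root vectors. First I would consider the subalgebra $\gl\subset\gq(n)$ isomorphic to $\gq(n-1)\times\gq(1)$ attached to the partition of coordinates $\{1,\ldots,n-1\}\sqcup\{n\}$; its even part is $\ggl_{n-1}\times\ggl_1$. Let $\gp=\gl\oplus\gn_{\gp}$ be the parabolic subalgebra whose nilradical $\gn_{\gp}$ has roots $\vareps_i-\vareps_n$, $i<n$. Writing $\lambda=(a_1,\ldots,a_n)$, the hypothesis $s_i*\lambda<\lambda$ for $i=1,\ldots,n-2$ is equivalent, by~(\ref{eqsucc}), to $a_1\succ a_2\succ\cdots\succ a_{n-1}$; by the finite-dimensionality criterion of~\cite{P} this is exactly the condition that the simple $\gq(n-1)$-module of highest weight $(a_1,\ldots,a_{n-1})$ be finite-dimensional. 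Since every simple highest weight $\gq(1)$-module is finite-dimensional, the simple highest weight $\gl$-module $E$ of highest weight $\lambda$ is finite-dimensional.

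Next I would view $E$ as a $\gp$-module with $\gn_{\gp}$ acting by zero and form $M:=\Ind_{\gp}^{\gq(n)}E=U(\gq(n))\otimes_{U(\gp)}E$, which by PBW is isomorphic as an $\gh_{\ol 0}$-weight module to $U(\gn_{\gp}^-)\otimes_{\C}E$, where $\gn_{\gp}^-$ is the opposite nilradical. The key queer-specific input is that $\gn_{\gp}^-$ is spanned by the $n-1$ even vectors $f_i:=f_{\vareps_i-\vareps_n}$ and the $n-1$ odd vectors $F_i:=F_{\vareps_i-\vareps_n}$, all of weight $\vareps_n-\vareps_i$, and that each $F_i$ squares to zero in $U(\gq(n))$ (since $\{F_i,F_i\}$ would have weight $2(\vareps_n-\vareps_i)$, which is not a root). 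Hence, as $\gh_{\ol 0}$-weight spaces, $U(\gn_{\gp}^-)\cong S(\gn_{\gp,\ol 0}^-)\otimes\Lambda(\gn_{\gp,\ol 1}^-)$. The weights $\vareps_n-\vareps_i$, $1\le i\le n-1$, are linearly independent, so each weight space of $S(\gn_{\gp,\ol 0}^-)$ is at most one-dimensional, whence every weight space of $U(\gn_{\gp}^-)$ has dimension at most $2^{n-1}$. Convolving with the finitely many finite-dimensional weight spaces of $E$ gives $\dim M^{\mu}\le 2^{n-1}\dim E$ for every $\mu$, so $M$ is bounded.

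Finally, $M$ is generated by $1\otimes E^{\lambda}$, its $\lambda$-weight space, so it is a highest weight module of highest weight $\lambda$ and therefore admits $L(\lambda)$ as a quotient; since a quotient of a bounded module is bounded, $L(\lambda)$ is bounded. The symmetric hypothesis $s_i*\lambda<\lambda$ for $i=2,\ldots,n-1$ reduces to the previous case by applying the automorphism $\iota$ of Section~\ref{iota}: since $\iota$ preserves the partial order and $\iota(s_k*\lambda)=s_{n-k}*\iota(\lambda)$, the weight $\iota(\lambda)$ satisfies $s_j*\iota(\lambda)<\iota(\lambda)$ for $j=1,\ldots,n-2$, and $\iota$ preserves boundedness. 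I expect the only genuinely delicate step to be the boundedness of $M$: unlike the $\ggl_n$ case one must track the odd root vectors, but the fact that they contribute only a finite exterior factor makes the multiplicity bound if anything cleaner than in the Lie algebra setting.
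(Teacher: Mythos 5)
Your proof is correct and takes essentially the same route as the paper's: the paper's one-line proof of Lemma~\ref{lemqn2} likewise induces from the finite-dimensional simple highest weight module over $\gq(n-1)\times\gq(1)$ (finite-dimensionality coming from the criterion of \S\ref{stariii}(iii), i.e.\ \cite{P}), exactly mirroring its proof of Lemma~\ref{lemgl2}. You merely make explicit what the paper leaves implicit --- the PBW multiplicity bound $2^{n-1}\dim E$ for the induced module, the passage to the simple quotient $L(\lambda)$, and the reduction of the case $i=2,\ldots,n-1$ via the automorphism $\iota$ --- and all of these details check out.
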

\begin{proof}
Like in ~\Lem{lemgl2}, the assertion follows from the fact that for $\gq(n-1)\times \gq(1)$ with the roots
$\alpha_1,\ldots,\alpha_{n-2}$, the simple highest module of weight $\lambda$ is finite-dimensional
if $s_i*\lambda<\lambda$ for $i=1,\ldots,n-2$, see Section~\ref{stariii} (iii).
\end{proof}

\subsection{Proof of ``only if'' part in~\Thm{thmq} (i)}\label{qonlyif}
Let $\lambda'$ be a bounded weight. Consider
a non-decreasing sequence of the form
\begin{equation}\label{step2}
\lambda'=s_{i_1}\ldots s_{i_r}*\lambda< s_{i_2}\ldots s_{i_r}*\lambda<
\ldots <s_{i_r}*\lambda\leq\lambda,
\end{equation}
where $\lambda$ is a $\widetilde{W}$-maximal weight.
By~\Prop{propW*max}, such a sequence exists.
Let us show that $w=s_{i_1}\ldots s_{i_r}$ is of the form described in~\Thm{thmq}.

Assume that $|i_1-i_2|>1$, that is $(\alpha_{i_1},\alpha_{i_2})=0$. For $j=1,2$ let $x_j$
be such that $s_{i_j}*\lambda=\lambda+x_j\alpha_{i_j}$. Then
$$s_{i_2}s_{i_1}*\lambda=\lambda'+x_1\alpha_{i_1}+x_2\alpha_{i_2}.$$
By~(\ref{step2}),
$\lambda'\leq s_{i_2}s_{i_1}\lambda'$ and thus $x_1,x_2\in\mathbb{Z}_{\geq 0}$
that is $\lambda'\leq s_{i_1j}\lambda'$ for $j=1,2$. This contradicts to~\Cor{corsal} (iii).
We conclude that $|i_1-i_2|=1$.

Assume that $i_3=i_1$. By~\Cor{corsal} (iii), $s_{i_2}\lambda'<\lambda'$ so we obtain an increasing sequence
$$s_{i_2}*\lambda'<\lambda'<s_{i_1}*\lambda'<s_{i_2}s_{i_1}*\lambda'<s_{i_1}s_{i_2}s_{i_1}*\lambda'$$
and $|i_1-i_2|=1$. This contradicts to~\Lem{sectionq3}. Hence $i_3\not=i_1$.

By~\Cor{corsal} (ii), all weights in the sequence~(\ref{step2}) are bounded.
Using the above results for the bounded weight $s_{i_j}\ldots s_{i_m}*\lambda$
we obtain $|i_j-i_{j+1}|=1, i_{j+2}\not=i_j$ for each $j$. Thus
$w$ is either of the form $s_is_{i+1}\ldots s_k$ or of the form $s_ms_{m-1}\ldots s_j$.

If $\lambda$ is regular and $z(\lambda)\leq 2$, we obtain that $w*\lambda$ is a weight listed in (a).

Consider the case (b): $\lambda$ is regular and $z(\lambda)\geq 3$.
Let $k$ be such that $(\lambda,\vareps_i)=0$ for $i=k-1,k,k+1$.
By \S\ref{exastar},
$$s_ks_{k-1}*\lambda=s_{k-1}s_k*\lambda<s_k*\lambda,s_{k-1}*\lambda$$
so, by~\Cor{corsal} (iii),
$s_ks_{k-1}*\lambda$ is not bounded. Therefore
$w\not=s_is_{i+1}\ldots s_{k}$ for $i\leq k-1$ and $w\not=s_ms_{m-1}\ldots s_{k-1}$ for $m\geq k$.
Thus $w$ satisfies the conditions  listed in (b).

It remains to consider the case when  $\lambda$ is singular.  By~\Cor{corsal} (iii), the boundedness
of $\lambda$ implies that the cardinality of $\{i|\ s_i*\lambda=\lambda\}$ is at most $1$.
Since $\lambda$ is singular, the cardinality is non-zero, so there exists a unique
index $m$ such that $s_m*\lambda=\lambda$. If $i_r\not=m$, we can extend the sequence~(\ref{step2})
to the sequence
$$w*\lambda=s_{i_1}\ldots s_{i_r}s_m*\lambda< s_{i_2}\ldots s_{i_r}s_m*\lambda<
\ldots <s_{i_r}s_m*\lambda<s_m*\lambda=\lambda$$
which consists of the same weights. By the above, the boundedness of $w*\lambda$ forces
$s_{i_1}\ldots s_{i_r}s_m=s_is_{i+1}\ldots s_m$ or $s_{i_1}\ldots s_{i_r}s_m=s_js_{j-1}\ldots s_m$.
Thus $w*\lambda$ is  a weight listed in (c).

\subsection{Boundedness of the weights listed in~\Thm{thmq}}\label{stepthmq}
By~\ref{sing_arbitrary}, the inequality~(\ref{orderdom}) does not hold for the $*$-action.
We will use the following weaker inequalities, which are proven in Appendix.

\begin{equation}\label{eqs1s2}
\begin{array}{l}
s_j*\lambda<\lambda \text{ for } j=i,i+1,\ldots,k\ \Longrightarrow\
s_is_{i+1}\cdots s_k*\lambda<s_{i+1}\cdots s_k*\lambda.
\end{array}
\end{equation}

\subsubsection{}\label{step1}
Retain notation of~\Thm{thmq} and recall that $f(\lambda)=n$ if $z(\lambda)\geq 1$.

We claim  that the boundedness of the  weights
$\prod_{j=i}^k s_j*\lambda$, where $\lambda$ is a $\widetilde{W}$--maximal integral weight
\begin{equation}\label{choicek}
\left\{\begin{array}{ll}
k\leq f(\lambda) \text{ or }k\geq f(\lambda)+z(\lambda)-1 & \text{ for the cases (a), (b);}\\
k\text{ s.t. } s_k*\lambda=\lambda &  \text{ for the case (c)}
\end{array}\right.\end{equation}
 implies the boundedness of all weights listed in~\Thm{thmq}.

Indeed, combining~\Cor{corsal} (ii) and~(\ref{eqs1s2}), we obtain the boundedness of the weights
of the form $\prod_{j=i}^k s_j*\lambda$ for $i\leq k$ and $k$ as above. Using the automorphism
$\iota$ (see Section~\ref{iota}) we deduce the boundedness of these weights for $i>k$.
 The remaining weights are $s_l*\lambda$
 for $f(\lambda)<l<f(\lambda)+z(\lambda)-1$ in the case (b). The boundedness of these
 weights follows from~\Lem{q3-loc} and
 the boundedness of the weight  $s_{f(\lambda)}*\lambda$. This establishes the claim.

It remains to check the boundedness of the weights $\prod_{j=i}^k s_j*\lambda$ for all $k$ listed in~(\ref{choicek}).
By~\Lem{lemqn2}, the weight $(b_1,\ldots, b_n)$ is bounded if $b_2\succ b_3\succ\ldots \succ b_n$.
 Thus it is enough to verify that for all $k$ listed in~(\ref{choicek}) one has
 \begin{equation}\label{eqstep1}
 b_2\succ b_3\succ\ldots \succ b_n,\ \text{ where }
 (b_1,\ldots,b_n):=\prod_{j=i}^k s_j*\lambda.
 \end{equation}

\subsubsection{Cases (a), (b)}\label{casei}
Let $\lambda$ be regular, that is $\lambda=(a_1,\ldots,a_n)$ with
$$a_1\succ a_2\succ\ldots \succ a_n.$$
We fix $k$ as in~(\ref{choicek}) and define
 $(b_1,\ldots,b_n)$ by the formula~(\ref{eqstep1}).
Note that $b_j:=a_j$ for $j>k+1$.

Consider the case when $k\leq f(\lambda)$. Then $a_1>a_2>\ldots >a_k$.
Using the transitivity of the relation $\succ$
and~\Lem{eqqq}, we obtain
$$ b_2>b_3>\ldots>b_{k+1}\succ b_{k+2}\succ \ldots\succ b_n$$
 and this establishes~(\ref{eqstep1}) for $k\leq f(\lambda)$.

The remaining case is $z(\lambda)\geq 2$ and $k\geq j+z(\lambda)-1$.
Set $j:=f(\lambda)$. One has
$$\begin{array}{l}
a_1>a_2>\ldots>a_{j}=0=a_{j+1}=\ldots= a_{j+z(\lambda)-1}>a_{j+z(\lambda)}>\ldots>a_n,\\
s_js_{j+1}\ldots s_k*\lambda=(a_1,a_2,\ldots,a_{j-1},a_{k+1},a_{j},a_{j+1},\ldots, a_{k},a_{k+2},\ldots, a_n)
\end{array}$$
and so
$$(b_1,b_2,\ldots, b_n)=(b_1,\ldots, b_j,a_{j},a_{j+1}\ldots, a_{k},a_{k+2},\ldots, a_n),$$
where
$$(b_1,b_2,\ldots,b_j):=s_1\ldots s_{j-1}*(a_1,a_2,\ldots,a_{j-1},a_{k+1}).$$
Since $a_1>a_2>\ldots a_{j-1}>a_{k+1}$, \Lem{eqqq} gives
$b_2>b_3>\ldots>b_j$ and $b_j\in \{a_{j-1}, a_{j-1}+1\}$. Since $a_{j-1}>a_j$, we get
$b_j>a_j$. This establishes~(\ref{eqstep1}) in this case and
completes the proof of  boundedness of the weights listed in~\Thm{thmq} (a), (b).

\subsubsection{Case (c)}
In this case  $\lambda=(a_1,\ldots,a_n)$ is  either of the form
$$a_1\succ a_2\ldots \succ a_m=a_{m+1}\succ a_{m+2}\succ\ldots\succ a_n,\ a_m\not=0$$
or of the form

$$a_1\succ a_2\ldots \succ a_m=-\frac{1}{2},\ \frac{1}{2}=a_{m+1}\succ a_{m+2}\succ\ldots\succ a_n.$$
One has
$$s_1\ldots s_m*(a_1,\ldots,a_n)=s_1\ldots s_{m-1}*\lambda=(b_1,b_2,\ldots,b_m,a_{m+1},\ldots, a_n),$$
where
$$(b_1,b_2,\ldots,b_m):=s_1\ldots s_{m-1}*(a_1,\ldots,a_m).$$
Consider $\lambda':=(a_1,a_2,\ldots,a_m)$. By above, $\lambda'$ is a regular integral
$\widetilde{W}$-maximal weight (for $\gq_m$). Since $a_m\not=0$ one has either $f(\lambda')=n$
or $f(\lambda')+z(\lambda')-1<m$. Since~(\ref{eqstep1}) holds for the cases (a), (b), we obtain
$$b_2\succ b_3\succ\ldots\succ b_m.$$
Moreover, $b_m\in\{a_{m-1},a_{m-1}+1\}$ so $b_m\succ a_m$.
This establishes~(\ref{eqstep1}) and completes the proof of  boundedness of the weights listed in~\Thm{thmq} (c).

\subsection{Proof of (ii)}
Recall that  $f_i$ acts injectively on $L(\lambda')$
if and only if $s_i*\lambda'\not<\lambda'$. Let $\lambda$ be a $\widetilde{W}$-maximal weight
such that $\#\{i: s_i*\lambda=\lambda\}\leq 1$.
If $\lambda$ is regular, take an arbitary $k$; if $\lambda$ is singular take
$k$ such that $s_{k+1}*\lambda=\lambda$. Then, by~(\ref{eqs1s2}), for $i<k$ one has
$s_is_{i+1}\ldots s_k*\lambda<s_{i+1}\ldots s_k*\lambda$
so $f_i$ acts injectively on $L(\prod_{j=i}^k s_j*\lambda)$.
Using the automorphism $\iota$, we obtain this for $i>k$. This establishes (ii) and completes the proof.
\qed

\subsection{Proof of~\Thm{thmq0}}\label{proofqq}
It remains to show that the weights satisfying (i), (ii) of~\Thm{thmq0}
are the weights listed in~\Thm{thmq}. Let $\lambda$ be a $\widetilde{W}$-maximal weight
and $w*\lambda$ satisfies the conditions (i), (ii). Write $w=s_{i_1}s_{i_2}\ldots s_{i_r}$
and set $\mu_j:=s_{i_j}\ldots s_{i_{r}}*\lambda$. One has
$\{i:\ s_i*\mu_j\geq \mu_j\}=s_{i_j}$ for $j=1,\ldots, r$.
In particular, for $j<r$ one has
$$s_{i_{j+1}}*\mu_j<\mu_j<s_{i_j}*\mu_j<s_{i_j}s_{i_{j+1}}*\mu_j$$
which  implies  $|i_j-i_{j+1}|=1$  (see \S\ref{qonlyif}).
Assume that $i_{j+1}=i_{j-1}$. Then
$$s_{i_j}s_{i_{j-1}}s_{i_j}s_{i_{j-1}}*\mu_{j-1}<s_{i_{j-1}}s_{i_j}s_{i_{j-1}}*\mu_{j-1}<
s_{i_j}s_{i_{j-1}}*\mu_{j-1}<
s_{i_{j-1}}*\mu_{j-1}<\mu_{j-1}.$$
Since $|i_j-i_{j+1}|=1$, this contradicts to~\Lem{sectionq3}.

We conclude that $w=\prod_{j=i}^k s_j$ for some $i,k$. By (ii), if $\lambda$ is singular, then
$k$ is such that $s_k*\lambda=\lambda$; this coincides the condition (c) in~\Thm{thmq}.
Finally, let us show that the conditions (b) in~\Thm{thmq} hold if
$z(\lambda)\geq 3$. Indeed, assume that $(\lambda,\vareps_j)=0$ for $j=i,i+1,i+2$
and $i_r=i+1, i_{r-1}=i$. Then $\mu_{r-1}=s_{i}s_{i+1}*\lambda$. By Section~\ref{exastar} in this case
$s_is_{i+1}*\lambda=s_{i+1}s_i*\lambda<s_i*\lambda,s_{i+1}*\lambda$ that is
$s_k\mu_{r-1}>\mu_{r-1}$ for $k=i,i+1$, a contradiction. The assertion follows.
\qed

\subsection{Families of integral bounded modules}

\subsubsection{} \label{subsec-fam}
\begin{defn}{reg_fam} Let $\lambda$ be as in~\Thm{thmq} (i)(a) and (i)(b). In particular, $\lambda$ is a $\widetilde{W}$-maximal regular integral weight. If $\prod_{j=i}^k s_j*\lambda$ is a bounded weight,  we call the module $L(\prod_{j=i}^k s_j*\lambda)$  a
{\it regular integral bounded module of type $i$}, and, also, a {\it bounded module of regularity $k$}. If $z(\lambda) \leq 2$,  or $z(\lambda) >2$ and $i \notin [f(\lambda), f(\lambda) + z(\lambda) - 2]$, the set of all  $L(\prod_{j=i}^k s_j*\lambda)$, $1\leq i \leq n-1$, is called {\it the regular integral family of $\lambda$ of regularity $k$}. If $z(\lambda) >  2$ we call the set containing all $L(\prod_{j=i_1}^{f(\lambda)} s_j*\lambda), L(s_{i_2}*\lambda),  L(\prod_{j=f(\lambda)+z(\lambda) -2}^{i_3} s_j*\lambda)$ for $i_1 \leq f(\lambda), f(\lambda) < i_2< f(\lambda) + z(\lambda) - 2, i_3 \geq f(\lambda) + z(\lambda) - 2$ {\it the regular integral family of $\lambda$ of regularity $f(\lambda),...,f(\lambda)+z(\lambda)-2$}.
\end{defn}

Every regular integral family can be represented by a connected graph with vertices the weights of the modules and arrows $\xleftarrow{j}$ and  $\xrightarrow{j}$, see Example \ref{ex_fam}.

\subsubsection{}
\begin{defn}{sing_fam} Let $\lambda$ be as in~\Thm{thmq} (i)(c) and let $k$ be such that
$s_k*\lambda=\lambda$. The module $L(\prod_{j=i}^k s_j*\lambda)$ is  called a
{\it singular  bounded module of type $i$}, and, also, a {\it  bounded module of singularity $k$}. In particular, $L(\lambda)$ is a bounded module of type $k$ and singularity $k$.  The set of
$n-1$ modules $L(\prod_{j=i}^k s_j*\lambda)$, $1 \leq i \leq n-1$,
is called the {\it singular family of $\lambda$ of singularity $k$}.

\end{defn}

\subsubsection{Remarks} (i) In both integral (regular and singular) cases, every  family  has exactly one module of each type. While a singular family of given singularity type can not have any other singularity type, this is not true for the regular integral  families and their regularities (see Example \ref{ex_fam}).
Recall the notation $\lambda \xrightarrow{i} \mu$ introduced in \S \ref{sec_arrow}. The weights of the modules in a regular integral family of $\lambda$ of regularity $k$ for $z(\lambda) \leq 2$ can be described by the connected graph
$$
s_{1}...s_{k}*\lambda \xleftarrow{1}...  \xleftarrow{k-2}s_{k-1} s_k * \lambda  \xleftarrow{k-1}
s_k * \lambda \xrightarrow{k+1} s_{k+1}s_k * \lambda  \xrightarrow{k+2} ... \xrightarrow{n-1} s_{n-1}... s_{k}*\lambda
$$
while those in a singular family of $\lambda$ of singularity $k$  by
$$
s_{1}...s_{k-1}*\lambda \xleftarrow{1}...  \xleftarrow{k-2}s_{k-1}* \lambda  \xleftarrow{k-1}
\lambda \xrightarrow{k+1} s_{k+1}* \lambda  \xrightarrow{k+2} ... \xrightarrow{n-1} s_{n-1}... s_{k+1}*\lambda
$$

Recall that in the former case we also have  $\lambda \xrightarrow{k} s_k * \lambda$, while in the latter case $\lambda = s_k * \lambda$.

(ii) There are more arrows in the above graphs, but we do not need them at the moment.

(iii) The singular bounded modules of type $i$ have the same shadow as the  regular integral bounded modules of type $i$.

\subsubsection{Example} \label{ex_fam}

 We continue the example in \S \ref{q_7} for $\lambda=(1,0,0,0,0,-1,-2)$. Below we describe all (total four) regular integral bounded families of $\lambda$.

$$\begin{tabular}{|l | l|}
\hline
Regularity & Bounded weights in the family of $$ \\
\hline
$1$ & $s_1*\lambda,\ s_2s_1*\lambda,\ s_3s_2s_1*\lambda,\ s_4...s_1*\lambda,\ s_5...s_1*\lambda,\ s_6...s_1*\lambda $ \\
\hline
 $2,3,4$ & $s_2*\lambda,\ s_1s_2*\lambda, s_3*\lambda,\ s_4*\lambda,\ s_5s_4*\lambda,\ s_6s_5s_4*\lambda$ \\
\hline
$5$ & $s_5*\lambda,\ s_4s_5*\lambda,\ s_3s_4s_5*\lambda,\ s_2...s_5*\lambda,\ s_1...s_5*\lambda,\ s_6s_5*\lambda $\\
\hline
$6$ & $s_6*\lambda,\ s_5s_6*\lambda,\ s_4s_5s_6*\lambda,\ s_3...s_6*\lambda,\ s_2...s_6*\lambda,\ s_1...s_6*\lambda $\\
\hline
\end{tabular}$$

The family of $\lambda$ of  regularity $2$, $3$, $4$ can be described by the graph
$$
\xymatrix{s_1 s_2 * \lambda& s_2 * \lambda
\ar@<0ex>[l]^{\; \; \; \;1} \ar@<0.5ex>[r]^3 & s_3 * \lambda \ar@<0.5ex>[l]^2
\ar@<0.5ex>[r]^4 & \ar@<0.5ex>[l]^3 s_4* \lambda \ar@<0.5ex>[l]^2
\ar@<0ex>[r]^{\! \! \! \! \!5} & s_5s_4* \lambda
\ar@<0ex>[r]^{6\;\;} & s_6s_5s_4* \lambda}
$$

\subsection{Nonintegral bounded weights}

\subsubsection{}

\begin{defn}{def-nonint-type}Let $1\leq k \leq n-1$ and $\lambda$ be a nonintegral bounded weight such that
 $\Delta^{\rm{inj}} L (\lambda) = - \Delta_{\gn'_k}$ and
$\Delta^{\rm{fin}} L (\lambda) =  \Delta_{\gs_k'} \sqcup \Delta_{\gn_k'} $ (see \S \ref{gl-n-not}). If $k=1$ (respectively, $1<k<n-1$, $k=n-1$) we call $\lambda$ a {\it bounded weight of type $1$ (respectively, of type $(k,k+1)$, of type $n-1$)}.

\end{defn}

\subsubsection{}
\begin{thm}{thmnint}
(i) A nonintegral weight for $\gq(n)$ is bounded if and only if it is of the form
$\lambda,\ s_m  \ldots s_{1}*\lambda$, $1\leq m \leq n-1$, where
$\lambda$ is a nonintegral weight such that $s_j * \lambda < \lambda$ for $j=2,3,...,n-1$.

(ii) The element $f_{\vareps_s-\vareps_{s+1}}$
acts injectively on $L(s_m s_{m-1} \ldots s_{1}*\lambda)$ if and only if

$\bullet$ $s=m$ for $m=1,n-1$;

$\bullet$ $s\in\{m,m+1\}$ for $2 \leq m \leq n-2$.

\end{thm}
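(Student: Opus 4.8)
The plan is to mirror the integral case as closely as possible, replacing the combinatorial structure of integral $\widetilde{W}$-orbits by the analysis of nonintegral ones carried out in~\Lem{lemgl3}~(ii) and~\Cor{corsal}~(iv). The key structural input is~\Cor{corsal}~(iv): for a nonintegral bounded weight $\lambda'$ the set $\{i:\ \lambda'\not> s_i*\lambda'\}$ equals $\{i:\ (\lambda',\vareps_i-\vareps_{i+1})\not\in\mathbb{Z}\}$ and is one of $\{1\}$, $\{n-1\}$, or $\{j,j+1\}$. This is the nonintegral analog of the ``cardinality $\leq 1$'' constraint that drove~\Thm{thmq0}, and it immediately restricts which simple reflections can increase a given bounded weight.

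For the ``only if'' direction of (i), I would start from a bounded weight $\lambda'$ and build a $\widetilde{W}$-increasing string $\lambda'=\lambda_0<\lambda_1<\cdots<\lambda_s$ ending at a $\widetilde{W}$-maximal weight $\lambda$, as guaranteed by~\Prop{propW*max}. By~\Cor{corsal}~(ii) every weight in the string is bounded, so~\Cor{corsal}~(iv) applies at each step. Because the raising set at a nonintegral bounded weight is always of the form $\{1\}$, $\{n-1\}$, or a consecutive pair $\{j,j+1\}$, the same argument as in~\S\ref{qonlyif} --- using~\Lem{sectionq3} to rule out $i_{j+2}=i_j$ and the orthogonality obstruction to force $|i_j-i_{j+1}|=1$ --- shows that the word $w$ expressing $\lambda'=w*\lambda$ must be a single decreasing run $s_m s_{m-1}\cdots s_1$. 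The maximality condition $s_j*\lambda<\lambda$ for $j=2,\ldots,n-1$ then follows from the fact that the only admissible raising index at the top is $1$, which forces all other $s_j$ to be strictly lowering. Conversely, boundedness of each $s_m\cdots s_1*\lambda$ follows from~\Lem{lemqn2} together with the string inequality~(\ref{eqs1s2}) and~\Cor{corsal}~(ii), exactly as in~\S\ref{stepthmq}; here one checks directly using~(\ref{eqsucc}) and~\Lem{eqqq} that the coordinates $b_2\succ b_3\succ\cdots\succ b_n$ after applying $s_m\cdots s_1$, since the first simple reflection is the only one touching the potentially nonintegral gap.

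For part (ii), the injectivity of $f_{\vareps_s-\vareps_{s+1}}$ on $L(s_m\cdots s_1*\lambda)$ is governed by~\S\ref{stariii}~(i): $f_s$ acts injectively iff $s_s*\mu\not<\mu$ for $\mu=s_m\cdots s_1*\lambda$. The task is thus to compute, for the weight $\mu$ obtained from $\lambda$ by the decreasing run, exactly which $s_s$ fail to lower it. I would do this by tracking the coordinates of $\mu$ explicitly: applying $s_m\cdots s_1$ to $(a_1,\ldots,a_n)$ moves the first entry into position $m+1$ and shifts the block $a_1,\ldots,a_m$ by one, so the only positions where the $\succ$-descending pattern can be broken are at the boundary $s=m$ (and, for interior $m$, also at $s=m+1$, where the nonintegral gap reappears). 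This gives the trichotomy stated. The main obstacle I expect is the boundary bookkeeping at $s=m$ versus $s=m+1$: one must carefully use the precise form of~(\ref{eqsucc}), including the exceptional atypical case $(a_i,a_{i+1})=(-\tfrac12,\tfrac12)$, to decide whether the relevant coordinate pair satisfies $a\succ a'$, equals (yielding $s_s*\mu=\mu$, hence noninjectivity), or is strictly raising. Invoking~\Cor{corsal}~(iv) once more confirms that for $2\leq m\leq n-2$ the injective set has cardinality exactly two and matches $\{m,m+1\}$, while at the extremes $m=1,n-1$ the cominuscule shadow from~\Defn{def-nonint-type} forces a single injective direction $s=m$.
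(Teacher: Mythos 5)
Your plan for the ``only if'' direction rests on a structure that is absent in the nonintegral case. By \Cor{corsal}~(iv) the non-lowering simple directions at a nonintegral bounded weight $\lambda'$ are exactly the nonintegral ones, and in a nonintegral direction $s_i*\lambda'$ differs from $\lambda'$ by a non-integer multiple of $\alpha_i$, hence is \emph{incomparable} with $\lambda'$. So every nonintegral bounded weight is already $\widetilde{W}$--maximal: the increasing string supplied by \Prop{propW*max} is trivial, and the chain joining $\lambda'$ to the type-$1$ weight $\lambda$ consists of pairwise incomparable weights. Consequently none of the order-based tools you import from \S\ref{qonlyif} (\Lem{sectionq3}, the orthogonality argument, \Cor{corsal}~(ii)) ever engages, and nothing in your plan yields the strict inequalities $s_j*\lambda<\lambda$ for $j\geq 2$ at the end of the chain. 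The paper argues quite differently: it runs an induction on the minimal element $k$ of $S(\lambda')=\{i:(\lambda',\vareps_i-\vareps_{i+1})\notin\mathbb{Z}\}$, showing $S(s_k*\lambda')=\{k-1,k\}$ for $k>1$, and at the base case $S(\lambda')=\{1,2\}$ it invokes $\ggl_3$-boundedness (non-simplicity of the $\ggl_3$ Verma module) to obtain $a_1-a_3\in\mathbb{Z}_{\geq 0}$, whence $S(s_1*\lambda')=\{1\}$. That arithmetic step is the crux --- without it the nonintegral gap need not vanish from position $2$ after applying $s_1$ --- and your proposal contains no substitute for it.

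The ``if'' direction also has a failing step: the pattern $b_2\succ b_3\succ\cdots\succ b_n$ you propose to verify for $s_m\cdots s_1*\lambda$ is false for $1\leq m\leq n-2$. Writing $s_m\cdots s_1*\lambda=(x_1,\ldots,x_m,y_m,a_{m+2},\ldots,a_n)$ as in the paper's recursion~(\ref{xiyi}), one has $x_i\equiv a_{i+1}$ and $y_m\equiv a_1 \pmod{\mathbb{Z}}$, so \emph{both} gaps at positions $(m,m+1)$ and $(m+1,m+2)$ are nonintegral; hence neither hypothesis of \Lem{lemqn2} holds for these weights, and (\ref{eqs1s2}) and \Cor{corsal}~(ii) are inapplicable because no step of the run raises the weight. \Lem{lemqn2} applies only to $\lambda$ itself (and to the extreme case $m=n-1$); boundedness of the intermediate weights must be transported along the chain by the twisted localization of \Prop{propqloc}, using the injectivity assertion (ii) --- which is why the paper proves (ii) first, via (\ref{nums})--(\ref{xiyi}), and deduces boundedness of all the listed weights afterwards. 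Your coordinate-tracking plan for (ii) is essentially that computation and is sound in spirit, but note that it produces injective directions $\{m,m+1\}$ for \emph{every} $m\leq n-2$, including $m=1$ (consistent with \Cor{corsal}~(iv) and with the corollary following the theorem, which assigns $s_1*\lambda$ the type $(1,2)$), so your ``cominuscule shadow'' conclusion of a single injective direction at $m=1$ fails for $n\geq 3$.
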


\begin{proof}
Write $\lambda=\sum_{i=1}^n a_i\vareps_i$.
By the assumption, $a_1-a_2\not\in\mathbb{Z}$ and $a_i\succ a_{i+1}$ for $i\not=1$.
Set  $y_0:=a_1$ and introduce $x_i,y_i$ for $i=1,\ldots, n-1$ by the formulas
$$x_i:=(s_i \ldots s_1*\lambda,\vareps_i),\ \  y_i:=(s_i \ldots s_1*\lambda,\vareps_{i+1}),$$
and note that
\begin{equation}\label{nums}
s_k \ldots s_1*\lambda=\sum_{i=1}^k x_i\vareps_i+ y_k\vareps_{k+1}+\sum_{i=k+2}^{n-1} a_i\vareps_i.
\end{equation}

One readily sees that
\begin{equation}\label{xiyi}
(x_i,y_i)= \left\{\begin{array}{lll}
(a_{i+1},y_{i-1} ) & \text{ if } & y_{i-1}+a_{i+1}\not=0,\\
(a_{i+1}-1,y_{i-1}+1) & \text{ if } & y_{i-1}+a_{i+1}=0.
\end{array}\right.
\end{equation}
Assume  that $x_{i-1}\not\succ x_{i}$ for some $i>1$. Then, by~(\ref{xiyi}),
either $(x_{i-1},x_i)=(a_i-1,a_{i+1}-1)$  and $a_i=a_{i+1}=0$ or
$(x_{i-1},x_i)=(a_i-1,a_{i+1})$ and $a_i-a_{i+1}=1$. In both cases~(\ref{xiyi}) implies
$y_{i-2}+a_{i}=0$ and  $y_{i-1}=y_{i-2}+1=1-a_i$. In the first case the first formula gives
$y_{i-2}=a_i=0$ so both $a_1=y_0$ and $a_2$ are integers, a contradiction with $a_1-a_2\not\in\mathbb{Z}$.
In the second case we obtain
$y_{i-1}=1-a_i=-a_{i+1}$ and so, by~(\ref{xiyi}), $x_i=a_{i+1}-1$, a contradiction.

We conclude $x_{i-1}\succ x_{i}$ for each $i>1$. The formula~(\ref{nums}) implies (ii).
By~\Lem{lemqn2}, $\lambda$ is bounded. Combining~\Prop{propqloc} and (ii),
we conclude that the weights listed in (i) are bounded.

For (iii) take a nonintegral bounded weight $\lambda'$. By~\Cor{corsal} (iv) the set
$$S(\lambda'):=\{i:s_i*\lambda'\not<\lambda'\}=\{i|\ (\lambda',\vareps_i-\vareps_{i+1})\not\in\mathbb{Z}\}$$
is either $\{1\}$ or $\{n-1\}$ or of the form
$\{k,k+1\}$.

In the case $S(\lambda')=\{1\}$, $\lambda:=\lambda'$ satisfies the assumption of the theorem.
Consider the case $S(\lambda')\not=\{1\}$;
let $k$ be the minimal element in $S(\lambda')$. Let us show that
$$\lambda:=s_1\ldots s_k*\lambda'$$
satisfies the assumption of the theorem.
We proceed by induction on $k$. Observe that,
 since $\lambda'$ is bounded and $s_k*\lambda'\not<\lambda'$, the weight $s_k*\lambda'$
is bounded (see~\Prop{propqloc}).  We define $S(s_k*\lambda')$ similarly to $S(\lambda')$.

Consider the case $k=1$ that is $S(\lambda')=\{1,2\}$.
Write $\lambda'=\sum_{i=1}^n a_i\vareps_i$.
Since $\lambda'$ is bounded, $\sum_{i=1}^{3} a_i\vareps_i$ is a $\ggl_3$-bounded weight.
In particular, the Verma  $\ggl_3$-module is not simple so
$a_1-a_{3}\in\mathbb{Z}_{\geq 0}$ (because $S(\lambda')=\{1,2\}$ so $a_1-a_2,a_2-a_3$ are not integral).
Therefore $(s_1*\lambda',\vareps_2-\vareps_3)\in\mathbb{Z}$ that is $2\not\in S(s_1*\lambda')$.
For $j>2$ one has $j\not\in S(\lambda')$ and so $j\not\in S(s_1*\lambda')$.
Hence $s_j*(s_1*\lambda')<s_1*\lambda'$ for $j\geq 2$.
We conclude that $S(s_1*\lambda')=\{1\}$ and thus
 $\lambda=s_1*\lambda'$ satisfies the assumption of the theorem.

Consider the case $k>1$.  Since the values
$(\lambda,\vareps_k-\vareps_{k+1}),\ (\lambda,\vareps_{k-1}-\vareps_{k+1})$
 are not integral, the values
$(s_k*\lambda,\vareps_k-\vareps_{k+1}),\ (s_k*\lambda,\vareps_{k-1}-\vareps_{k})$
are not integral as well. Hence $k-1,k\in S(s_k*\lambda')$. Since $s_k*\lambda'$ is bounded,
$S(s_k*\lambda')$ contains at most two elements and thus $S(s_k*\lambda')=\{k-1,k\}$.
By induction, $(s_1\ldots s_{k-1})*(s_k*\lambda')=\lambda$ satisfies the assumption of the theorem.
This completes the proof.
\end{proof}

\subsection{}
\begin{cor}{}
If $\lambda$ is a nonitengral bounded weight of type $1$, then $s_i...s_1*\lambda$ is a  nonitengral bounded weight of type $(i,i+1)$ if $1\leq i \leq n-2$ and a nonitengral bounded weight of type $n-1$ if $i=n-1$.
\end{cor}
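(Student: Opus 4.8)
The plan is to deduce the corollary directly from Theorems~\ref{thmnint} and~\ref{thmq}, via the geometric (shadow-based) notion of type in Definitions~\ref{def-nonint-type} and~\ref{def-int-type}. By~\Thm{thmnint}(i), a nonintegral bounded weight $\lambda$ of type $1$ is characterized by $s_j*\lambda<\lambda$ for $j=2,\ldots,n-1$, which by~\Thm{thmnint}(ii) (the case $m=1$, but note that here $\lambda$ itself is the ``$\lambda$'' of that theorem, so the weights $s_i\ldots s_1*\lambda$ are exactly the members of the nonintegral family) identifies $\Delta^{\rm inj}L(\lambda)$. The goal is to show that applying $s_i\ldots s_1$ in the $*$-action shifts the injective directions precisely as the change from $\gn_k'$ to $\gn_{k+1}'$ predicts.

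First I would use~\Thm{thmnint}(ii) to pin down the shadow of $L(s_i\ldots s_1*\lambda)$: the set of simple roots $\alpha=\vareps_s-\vareps_{s+1}$ on which $f_\alpha$ acts injectively is $\{s=i\}$ for $i=1,n-1$ and $\{s=i,i+1\}$ for $2\leq i\leq n-2$. By~\Cor{corsal}(iv) (applied to nonintegral weights), for a nonintegral bounded weight the set $\{i:\lambda\not>s_i*\lambda\}$ of injective simple directions coincides with $\{i:(\lambda,\vareps_i-\vareps_{i+1})\not\in\Z\}$ and equals either $\{1\}$, $\{n-1\}$, or $\{j,j+1\}$. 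Thus the shadow is determined by its set of injective \emph{simple} roots, and I must match this set against the root systems $\Delta_{\gn'_k}$ of the cominuscule-type parabolics $\gp'_k$ defined in~\S\ref{gl-n-not}.

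The key computation is to check that the simple injective directions $\{i\}$ (for $i=n-1$) and $\{i,i+1\}$ (for $1\leq i\leq n-2$) are exactly the simple roots lying in $-\Delta_{\gn'_{i+1}}$. Recall $\gp'_1=\gp_1$, $\gp'_n=\gp_{n-1}$, and $\gp'_k=\gp_k\cap\gp_{k-1}$ for $2\leq k\leq n-1$, so that $\gn'_k$ for $1<k<n$ has two ``corners'' at positions $k-1$ and $k$, giving simple roots $\alpha_{k-1}$ and $\alpha_k$ in $\Delta_{\gn'_k}$; for $k=n$ (type $n-1$) only $\alpha_{n-1}$ survives. Reindexing: type $(k,k+1)$ corresponds to $\gp'_{k+1}$ whose nilradical contains exactly $\alpha_k,\alpha_{k+1}$, and type $n-1$ to $\gp'_n=\gp_{n-1}$ with sole simple root $\alpha_{n-1}$. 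This matches the injective-direction count from~\Thm{thmnint}(ii) after the shift $i\mapsto i+1$, so that $s_i\ldots s_1*\lambda$ has type $(i,i+1)$ for $1\leq i\leq n-2$ and type $n-1$ for $i=n-1$.

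The main obstacle is bookkeeping rather than conceptual: I must verify that knowing the injective \emph{simple} roots forces the full shadow $(\Delta^{\rm inj},\Delta^{\rm fin})$ to be of the form $(-\Delta_{\gn'_k},\Delta_{\gs'_k}\sqcup\Delta_{\gn'_k})$ for the correct $k$. This uniqueness is exactly what Proposition~\ref{gl-bounded}(ii) guarantees for the $\ggl_n$-submodule $\dot L(\mu)$ of $L(s_i\ldots s_1*\lambda)$, since a nonintegral $\ggl_n$-bounded weight has a shadow of this parabolic type and the parabolic $\gp'_k$ is uniquely determined by its two (or one) simple roots in the nilradical. Hence I would invoke~\Prop{prop1}(ii) together with~\Cor{corsal}(iv) to conclude that the simple injective roots determine $k$, and then read off the type from Definition~\ref{def-nonint-type}, completing the proof.
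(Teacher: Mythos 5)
Your proposal is correct and is essentially the paper's own (implicit) argument: the paper states this corollary without proof, as an immediate consequence of \Thm{thmnint}(ii) combined with \Cor{corsal}(iv), the parabolic shadow description of \Prop{prop1}(ii), and the matching of injective simple roots against the simple roots $\alpha_{k-1},\alpha_k$ of $\Delta_{\gn'_k}$ --- exactly the route you take, including your (necessary) reindexing of the type labels in Definition~\ref{def-nonint-type}. One caveat you should make explicit rather than silent: in your second paragraph you quote \Thm{thmnint}(ii) as printed, with the singleton bullet ``$s=m$ for $m=1,n-1$,'' yet in your third paragraph you place $i=1$ in the two-element case $\{i,i+1\}$; the latter is what the corollary requires (type $(1,2)$ for $s_1*\lambda$) and what the proof of \Thm{thmnint} actually yields via formula~(\ref{nums}) --- for $n\geq 3$ one has $(s_1*\lambda,\vareps_2-\vareps_3)\equiv a_1-a_3\not\in\mathbb{Z}$, so $f_2$ acts injectively on $L(s_1*\lambda)$ as well --- so the printed $m=1$ bullet is a typo in the paper, and your correction is right but currently unacknowledged, leaving your two paragraphs formally inconsistent at $i=1$.
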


\subsection{}
\begin{defn}{nint_fam} Let $\lambda$ be as in Theorem \ref{thmnint} and let $1\leq m \leq n-1$.

(i) The set of $n$ modules
$
\{ L(\lambda), L( s_i  \ldots s_1*\lambda)\; | \; 1 \leq i \leq n-1\}
$
is called a {\it nonintegral family of $\lambda$}.

(ii) The module $L(\lambda)$ is called a {\it nonintegral bounded module of type} $i$ (respectively, $(i,i+1)$), if $\lambda$ is a nonintegral bounded weight of type $i$ (respectively, $(i,i+1)$), see Definition \ref{def-nonint-type}.
\end{defn}

\begin{rem}{} (i) Every nonintegral family  has exactly one module of each type $1, (1,2),...,(n-2,n-1), n-1$. The weights of the modules in such a family can be described by the graph
$$
\xymatrix{ \lambda
 \ar@<0.5ex>[r]^{\! \! \! 1} & s_1 * \lambda  \ar@<0.5ex>[l]^{\! \! \! 1}
\ar@<0.5ex>[r]^{2 \! \! \! \!}& \ar@<0.5ex>[l]^{2\! \! \! \!} ...   \ar@<0.5ex>[r]^{n-1 \hspace{1cm}}
&  \ar@<0.5ex>[l]^{n-1 \hspace{1cm}} s_{n-1}...s_1* \lambda}
$$
where $\lambda$ is of type $1$.

(ii) Theorem \ref{thmnint} can be reformulated in terms of a nonintegral weight of type $n$. Indeed, it is not difficult to prove that $\lambda$ is of type $n$ if and only if $s_{1} s_{2} ...s_{n-1}*\lambda$ is of type $1$. So, alternatively, every nonintegral bounded module of type $m$ is of the form $L(s_m  s_{m+1}...s_{n-1} * \lambda)$ where $\lambda$ is a nonintegral weight with $s_j * \lambda < \lambda$ for $j = 1,2,...,n-2$ (equivalently, $\lambda$ is of type $n$).

%(iii) The nonintegral bounded modules of type $1$ and $n-1$ have the same shadow as the  regular integral modules of type $1$ and $n-1$, respectively. The shadow of a nonintegral bounded module $L (\mu)$ of type $(m-1,m)$, $2\leq m \leq n-1$,  is given by the identities $\Delta_{\rm{fin}} L (\mu)=  \Delta_{\gs'_k} \sqcup \Delta_{\gn'_k} $, $\Delta_{\rm{inj}} L(\mu) =  - \Delta_{\gn'_k} $.
\end{rem}

\subsection{}
\begin{cor}{} \label{cor-q-bounded}
Proposition \ref{gl-bounded} remains valid in the case of $\gq(n)$, i.e.
if $\dot{L}(\lambda)$ is replaced by $L(\lambda)$.
\end{cor}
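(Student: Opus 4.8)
The plan is to show that the shadow of $L(\lambda)$ coincides with the shadow of a parabolic subalgebra and then to pin down that parabolic from the simple-root data already recorded in \Cor{corsal}; this uses neither \Prop{prop1} nor the finer classification, only that $L(\lambda)$ is a simple bounded highest weight module. First I would verify that $L(\lambda)$ has a shadow in the strong form $\Delta=\Delta^{\rm{inj}}L(\lambda)\sqcup\Delta^{\rm{fin}}L(\lambda)$: for each root $\alpha$ the subspace of vectors on which $e_\alpha$ acts locally finitely is in fact a $\gq(n)$-submodule (not merely a $\ggl_n$-submodule), because $\ad e_\alpha$ is locally nilpotent on $\gq(n)$ and the binomial identity carries local finiteness through the odd generators; by simplicity of $L(\lambda)$ this submodule is $0$, so $e_\alpha$ is injective, or all of $L(\lambda)$, so $e_\alpha$ is locally finite.

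The heart of the argument is to show that $\Delta^{\rm{fin}}:=\Delta^{\rm{fin}}L(\lambda)$ is a parabolic subset of $\Delta$. Since $L(\lambda)$ is a highest weight module, every $e_\alpha$ with $\alpha\in\Delta^+$ is locally nilpotent, so $\Delta^+\subseteq\Delta^{\rm{fin}}$ and hence $\Delta^{\rm{fin}}\cup(-\Delta^{\rm{fin}})=\Delta$; what remains is closure under addition. For $\delta_1,\delta_2\in\Delta^{\rm{fin}}$ with $\delta_1+\delta_2\in\Delta$ I would conjugate by the automorphism $\exp(e_{\delta_1})$, which is well defined since $e_{\delta_1}$ is locally nilpotent. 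In type $A$ one has $(\ad e_{\delta_1})^2e_{\delta_2}=0$, so $\exp(e_{\delta_1})\,e_{\delta_2}\,\exp(-e_{\delta_1})=e_{\delta_2}+c\,e_{\delta_1+\delta_2}$ with $c\neq 0$, and this operator is locally nilpotent. Because $\delta_1+2\delta_2$ is not a root in type $A$, the operators $e_{\delta_2}$ and $e_{\delta_1+\delta_2}$ commute, and a difference of commuting locally nilpotent operators is locally nilpotent; hence $e_{\delta_1+\delta_2}$ is locally finite and $\delta_1+\delta_2\in\Delta^{\rm{fin}}$.

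With $\Delta^{\rm{fin}}$ known to be a parabolic subset containing $\Delta^+$, it is determined by its intersection with $-\Pi$, equivalently by $J:=\{i:f_i\text{ acts injectively on }L(\lambda)\}=\{i:s_i*\lambda\not<\lambda\}$ (using \S\ref{stariii}(i)). If $\lambda$ is integral, then $L(\lambda)$ is infinite-dimensional, so $J\neq\emptyset$ by \S\ref{stariii}(iii), while \Cor{corsal}(iii) gives $|J|\le 1$; thus $J=\{k\}$ for a unique $k$, the parabolic subset is $\Delta_{\gs_k}\sqcup\Delta_{\gn_k}$, and $\Delta^{\rm{inj}}L(\lambda)=-\Delta_{\gn_k}$ as in part (i). If $\lambda$ is nonintegral, \Cor{corsal}(iv) gives $J=\{j,j+1\}$, $\{1\}$, or $\{n-1\}$, which singles out the parabolic $\gp'_k$ and yields part (ii). In each case $\Delta_{\gn_k}$ (resp. $\Delta_{\gn'_k}$) determines $k$, so $k$ is unique.

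I expect the closure step to be the main obstacle, since it is exactly what upgrades the simple-root information of \Cor{corsal} to a statement about all roots. The delicate point is that conjugation produces only $e_{\delta_2}+c\,e_{\delta_1+\delta_2}$, so one must argue separately that $e_{\delta_1+\delta_2}$ itself, and not merely this combination, is locally finite; the type-$A$ fact that $e_{\delta_2}$ and $e_{\delta_1+\delta_2}$ commute is precisely what makes the separation go through.
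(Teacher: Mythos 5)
Your argument is correct, but it proceeds along a genuinely different route from the paper. The paper gives no standalone proof of this corollary: there it is read off from the classification machinery, namely \Thm{thmq}(ii) (each integral bounded weight $\prod_{j=k}^i s_j*\lambda$ is ``of type $k$'' in the sense of Definition~\ref{def-int-type}) and \Thm{thmnint}(ii) (the injectivity pattern of the simple $f_i$'s in the nonintegral strings), with the upgrade from simple-root data to the full shadow resting on comparison with a simple $\ggl_n$-submodule $\dot{L}(\mu)\subset L(\lambda)$ --- exactly as in the proof of \Cor{corsal}(iii),(iv) --- and on \Prop{prop1} applied to $\dot{L}(\mu)$. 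You instead bypass both \Prop{prop1} and the classification by proving, Fernando-style, that $\Delta^{\rm{fin}}L(\lambda)$ is a closed subset of $\Delta$ containing $\Delta^+$ directly on the $\gq(n)$-module: the $\exp(e_{\delta_1})$-conjugation computation is valid (in type $A$ one indeed has $2\delta_1+\delta_2,\ \delta_1+2\delta_2\notin\Delta\cup\{0\}$, so the expansion truncates and $e_{\delta_2}$ commutes with $e_{\delta_1+\delta_2}$), and the reduction of a closed set containing $\Delta^+$ to $\Delta^+\sqcup(-\Delta^+_{J'})$, $J'\subseteq\Pi$, is the standard height induction. What each approach buys: the paper's version is free once \Thm{thmq} and \Thm{thmnint} are in place and directly ties the type $k$ to the position in the $*$-string; yours is self-contained, uses only \Cor{corsal}(iii),(iv) and \S\ref{stariii}(i),(iii), and applies verbatim to any simple bounded highest weight $\gq(n)$-module, making the corollary logically independent of the classification theorems.

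Two small points you should make explicit. First, to define $\exp(e_{\delta_1})$ you need local \emph{nilpotence}, not merely local finiteness; these coincide here because $L(\lambda)$ is a weight module and $\delta_1\not=0$, so the span of $\{e_{\delta_1}^k v\}$ for a weight vector $v$ is graded by distinct weights. Second, your phrase ``if $\lambda$ is integral, then $L(\lambda)$ is infinite-dimensional'' should read that infinite-dimensionality is the standing hypothesis inherited from \Prop{prop1}; combined with \S\ref{stariii}(iii) it gives $J\not=\emptyset$, and \Cor{corsal}(iii) gives $\#J\leq 1$, so $J=\{k\}$ as you claim. Neither point is a gap, only a matter of wording.
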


\subsection{Remark}
Recall that $\lambda$ is a $\ggl_n$-bounded weight
if and only if the sequence $(a_1,\ldots ,a_n)$ defined by
$\lambda+\rho=:\sum_{i=1}^n a_i\vareps_i$ has the following property:
for some index $j$ one has $a_{j-1}-a_{j+1}\in\mathbb{Z}_{>0}$ and
$a_i-a_{i+1}\in\mathbb{Z}_{>0}$ for $i\not=j$. It turns out that
the similar description ($\lambda=\sum_{i=1}^n a_i\vareps_i$ is bounded if and only if
for some index $j$ one has $a_{j-1}\succ a_{j+1}$ and
$a_i\succ a_{i+1}\in\mathbb{Z}_{>0}$ for $i\not=j$)
 does not hold for $\gq (n)$.

For example, the weight $(1,-1,1,-1)$
is bounded, however does not satisfy this condition.
The boundedness of $(1,-1,1,-1)$ follows from~\Cor{corsal} (ii),~\Lem{lemqn2} and
the fact that $(1,-1,1,-1)=s_1*(-2,2,1,-1)>(-2,2,1,-1)$.
A nonintegral counterexample is the weight $(a,-a,a)$ for $2a\not\in\mathbb{Z}$:
taking into account that $s_1*(a,-a,a)=(-a-1,a+1,a)$ one obtains
the boundedness of $(a,-a,a)$ from~\Prop{propqloc} and~\Lem{lemqn2}.

On the other hand, the weight  $(a,-a,a-1)$ satisfies
the above condition, but it is not bounded for $2a\not\in\mathbb{Z}$:
indeed, $s_1*(a,-a,a-1)=(-a+1,a-1,a-1)$ and, by~\Prop{propqloc},
the boundedness of $(a,-a,a-1)$ is equivalent to the boundedness of
$(-a+1,a-1,a-1)$ which fails by~\Cor{corsal} (iv).

\section{$\gg \gl_n$-structure of bounded $\gq(n)$-modules} \label{sec_gl_n}
In this section we will study the $\gg \gl_n$-structure of bounded modules. We will prove Propositions \ref{prop-loc-fin} and \ref{prop-link}.

\subsection{Definitions}

\subsubsection{}\label{gl-n-fam} We write $\xymatrix{
 \lambda \ar@{-->}[r]^{i} & \mu}$ if $\dot{L}(\lambda)$ is a $\gg \gl_n$-subquotient of $\cD_{\alpha_i} \dot{L} (\mu)$, where $\alpha_i = \vareps_i - \vareps_{i+1}$. Like in the $\gq(n)$-case, we have three types of $\gg \gl_n$ bounded families (for details see \cite{M}):

 (i) A regular integral $\gg \gl_n$-family of $\lambda$ of regularity $k$
 $$
\xymatrix{s_1...s_k \cdot \lambda
 \ar@{-->}@<0.5ex>[r]^{2 \! \! \! \! \! \!} & ... \ar@{-->}@<0.5ex>[l]^{1 \! \! \!\! \! \!}
\ar@{-->}@<0.5ex>[r]^{\! \! \! \! \!  k }& \ar@{-->}@<0.5ex>[l]^{k-1} s_k \cdot \lambda   \ar@{-->}@<0.5ex>[r]^{k+1}
&  \ar@{-->}@<0.5ex>[l]^{k \! \! \! \! \! } ... \ar@{-->}@<0.5ex>[r]^{ \! \! \! \! \! \! \! \! \! \! n-1  }& \ar@{-->}@<0.5ex>[l]^{ \! \! \! \!\! \! \! \! \!  \! n-2} s_{n-1}...s_k  \cdot \lambda }
$$
where $\lambda$ is a $\gg \gl_n$-dominant integral  weight. In addition, ``outside the family'' we have $\xymatrix{
 \lambda \ar@{-->}[r]^{k} & s_k \cdot \lambda}$.

 (ii) A singular  $\gg \gl_n$-family of $\lambda$ of singularity $k$
 $$
\xymatrix{s_1...s_{k-1} \cdot \lambda
 \ar@{-->}@<0.5ex>[r]^{2 \! \! \! \! \! \!  \! \! \! \! \! \!} & ... \ar@{-->}@<0.5ex>[l]^{1  \! \! \! \! \! \! \! \! \!\! \! \!}
\ar@{-->}@<0.5ex>[r]^{\! \! \! \! \!  k }& \ar@{-->}@<0.5ex>[l]^{k-1} \lambda   \ar@{-->}@<0.5ex>[r]^{k+1}
&  \ar@{-->}@<0.5ex>[l]^{k \! \! \! \! \! } ... \ar@{-->}@<0.5ex>[r]^{ \! \! \! \! \! \! \! \! \! \!  \! \! \! \! \! \! n-1  }& \ar@{-->}@<0.5ex>[l]^{ \! \! \! \!\! \! \! \! \!  \!  \! \! \! \! \! \! n-2} s_{n-1}...s_{k+1}  \cdot \lambda }
$$
where $\lambda$ is a $W$-maximal integral $\gg \gl_n$-bounded weight  with $s_k \cdot \lambda = \lambda$.

 (iii) A nonintegral  $\gg \gl_n$-family of $\lambda$
 $$
\xymatrix{ \lambda
 \ar@{-->}@<0.5ex>[r]^{\! \! \! 1} & s_1 \cdot \lambda  \ar@{-->}@<0.5ex>[l]^{\! \! \! 1}
\ar@{-->}@<0.5ex>[r]^{2 \! \! \! \!}& \ar@{-->}@<0.5ex>[l]^{2\! \! \! \!} ...   \ar@{-->}@<0.5ex>[r]^{n-1 \hspace{1cm}}
&  \ar@{-->}@<0.5ex>[l]^{n-1 \hspace{1cm}} s_{n-1}...s_1 \cdot  \lambda}
$$
 where $\lambda$ is a $\gg \gl_n$-bounded nonitegral weight of type $1$.

\subsubsection{}  \begin{rem}{}
 (i) For every bounded integral weights $\lambda$ and $\mu$ and $i> 1$, $\xymatrix{
\lambda \ar@{-->}[r]^{i} & \mu}$ if and only if $\xymatrix{
\mu \ar@{-->}[r]^{i-1} & \lambda}$.

(ii) For every bounded nonintegral weights $\lambda$ and $\mu$, $\xymatrix{
\lambda \ar@{-->}[r]^{i} & \mu}$ if and only if $\xymatrix{
\mu \ar@{-->}[r]^{i} & \lambda}$.

 \end{rem}

\subsubsection{}\label{notation-ss} For a $\gq(n)$-module (respectively, $\gg \gl_n$-module) $M$ of finite length, by $M_{\rm ss}$ (resp., $M_{\gg \gl {\rm -ss}}$) we denote the direct sum of  all simple subquotients (with multiplicities) of $M$. When the notation $M_{\gg \gl {\rm -ss}}$ is used for
 a $\gq (n)$-module $M$, we consider $M$ as a $\gg \gl_n$-module. For a weight $\lambda$ set
$$
JH (\lambda) := \{ \mu \in \gh_{\bar{0}}^* \; | \; \dot{L}(\mu) \mbox{ is a subquotient of } L(\lambda)\}.
$$

\subsection{}

 \begin{prop}{prop-loc-fin}
 Let $\lambda$ be a bounded integral weight of type $i$. Then $\cD_{\alpha_i} L(\lambda)$ has unique simple subquotients of type $i-1$ (for $i>1$) and $i+1$ (for $i<n-1$).
 \end{prop}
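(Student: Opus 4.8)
The plan is to prove existence and uniqueness of the type $i+1$ subquotient for an arbitrary type-$i$ weight, and to deduce the type $i-1$ statement from the diagram automorphism $\iota$ of Section~\ref{iota}. Since $\iota(\alpha_i)=\alpha_{n-i}$ and $L(\lambda)^{\iota}=L(\iota(\lambda))$, the automorphism identifies $\cD_{\alpha_i}L(\lambda)$ with $\cD_{\alpha_{n-i}}L(\iota(\lambda))$ and sends a type-$t$ weight to a type-$(n-t)$ one, so the type $i-1$ subquotient of $\cD_{\alpha_i}L(\lambda)$ is the $\iota$-twist of the type $(n-i)+1$ subquotient of $\cD_{\alpha_{n-i}}L(\iota(\lambda))$. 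First I would record what the hypothesis gives: as $\lambda$ is of type $i$ we have $-\alpha_i\in\Delta^{\rm{inj}}L(\lambda)=-\Delta_{\gn_i}$, so $f_{\alpha_i}$ is injective and $\cD_{\alpha_i}L(\lambda)\supseteq L(\lambda)$ is defined, while $\alpha_{i\pm1}\in\Delta_{\gs_i}\subseteq\Delta^{\rm{fin}}L(\lambda)$, so $e_{\alpha_{i\pm1}}$ and $f_{\alpha_{i\pm1}}$ both act locally finitely. Writing $\lambda=\prod_{j=i}^k s_j*\lambda_0$ with $\lambda_0$ the $\widetilde{W}$-maximal weight of the family of $\lambda$, I let $\mu^+$ denote the unique type $i+1$ weight of that family.

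For existence I would split according to whether $s_i*\lambda$ equals the desired neighbour $\mu^+$. This happens exactly when $i<k$: there $s_i*\lambda=\prod_{j=i+1}^k s_j*\lambda_0=\mu^+$ and $\lambda<s_i*\lambda$, so \Prop{propqloc} yields $L(\mu^+)$ as a subquotient of $\cD_{\alpha_i}L(\lambda)$ immediately. When $i\ge k$ the weight $s_i*\lambda$ is not $\mu^+$ (it is the maximal weight $\lambda_0$ when $i=k$), and $\mu^+$ involves the coordinates $i+1,i+2$, so it cannot be reached by localizing at $\alpha_i$ alone. Here I would pass to the copy of $\gq(3)$ spanned by the root vectors for $\pm\alpha_i,\pm\alpha_{i+1}$, acting on the three active coordinates $\{i,i+1,i+2\}$. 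With the remaining coordinates frozen these entries of $\lambda$ realize the configuration of \Lem{q3-loc} ($f_i$ injective together with $(\lambda,\vareps_{i+1})\succ(\lambda,\vareps_{i+2})$), and I would rerun its construction: from a $\gg\gl_n$-highest weight vector $v$ of $L(\lambda)$ form $u=(F_{i+1}f_i-f_iF_{i+1})v$, check $\fn^+_0u=0$, and verify that $f_i^{-m}u$ with $m=(\mu^+,\alpha_i)$ is a $\gq(n)$-highest weight vector of weight $\mu^+$ lying outside $L(\lambda)$. The submodule it generates in $\cD_{\alpha_i}L(\lambda)/L(\lambda)$ is then a quotient of $M(\mu^+)$, hence has $L(\mu^+)$ as a constituent.

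For uniqueness I would use boundedness. Localization preserves weight multiplicities, so $\cD_{\alpha_i}L(\lambda)$ is bounded of finite length and each of its simple subquotients is a bounded $L(\mu)$ with $\mu\in\wt L(\lambda)+\Z\alpha_i$. If such a $\mu$ is of type $i+1$, then the constraint $\mu\in\wt L(\lambda)+\Z\alpha_i$, together with the facts that localization preserves the central character and that a bounded family contains a single weight of each type, forces $\mu=\mu^+$; thus at most one isomorphism type occurs. To see that the multiplicity is exactly one I would examine the space of $\gq(n)$-singular vectors of weight $\mu^+$ in $\cD_{\alpha_i}L(\lambda)$: as in the proof of \Lem{lemcliff} it carries an action of the Clifford superalgebra attached to $\mu^+$, and boundedness leaves room only for a single copy of its simple module.

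The step I expect to be the genuine obstacle is this rank-three construction. \Lem{q3-loc} is established only for the fully atypical top $\lambda_0=0$, whereas in general the three active coordinates may be typical, and in the merged families with $z(\lambda_0)\ge3$ the neighbour $\mu^+$ even sits at a different regularity from $\lambda$; one must check that the same vector $u=(F_{i+1}f_i-f_iF_{i+1})v$ and the same twisted exponent $m=(\mu^+,\alpha_i)$ keep producing a highest weight vector across all these cases, which requires careful bookkeeping of the Clifford action on the highest weight space. Once that weight space has been identified as a Clifford module, the uniqueness count is comparatively routine.
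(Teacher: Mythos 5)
Your overall skeleton (reduce type $i-1$ to type $i+1$ via $\iota$, handle the case $i<k$ by \Prop{propqloc}) is fine, but the two hard halves of the proposition are left genuinely open. For existence when $i\geq k$ you reduce everything to a generalized \Lem{q3-loc} and then concede you cannot prove it; this is a real gap, not bookkeeping. Note also that your candidate vector $u=(F_{i+1}f_i-f_iF_{i+1})v$ is just $[F_{i+1},f_i]v$, i.e.\ a single odd root vector of weight $-\alpha_i-\alpha_{i+1}$ applied to $v$, which is \emph{not} the analogue of the paper's vector: in \Lem{q3-loc} the vector is $u=(F_2f_1-f_2F_1)v_1$ with \emph{mixed} indices, applied to $v_1=f_1v_0$ rather than to the highest weight vector, and the computations $E_1u=0$, $E_2u=f_1v_1$ that drive the argument depend on exactly that combination. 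Moreover this unproved half is not confined to the merged families with $z(\lambda_0)\geq 3$: whenever $i>k$ (including singular and typical configurations) one has $s_i*\lambda=\prod_{j=i-1}^k s_j*\lambda_0$, the type $i-1$ weight, so the type $i+1$ subquotient always requires such a construction in your approach. The paper avoids this entirely: it never generalizes \Lem{q3-loc} here, but descends to $\gg \gl_n$, where Mathieu's description of the bounded families (\S\ref{gl-n-fam}) already yields that localizing the type-$i$ member at $\alpha_i$ produces both neighbouring types; this is packaged in Lemma \ref{gl-n-loc-fin} and transferred to $\gq(n)$ through $\gg \gl_n$-semisimplification and the $\gp_j$-locally-finite functors $[\,\cdot\,]$.

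Your uniqueness argument also fails as stated. For $\gq(n)$ the central character is far too coarse to separate families: it only remembers the nonzero coordinates up to permutation and removal of pairs $\{a,-a\}$, so \emph{all} the families attached to a single $\widetilde{W}$-maximal weight share one central character (e.g.\ the four families of $\lambda_0=(1,0,0,0,0,-1,-2)$ in \S\ref{ex_fam}, which contain four distinct bounded weights of each type), and even weights outside $\widetilde{W}*\lambda_0$ can share it. Hence ``a bounded family contains a single weight of each type'' does not force $\mu=\mu^+$; the entire burden falls on the coset condition $\mu\in\lambda+\mathbb{Z}\alpha_i-Q^+$, and you never prove that this excludes the type-$(i+1)$ weights of the other families --- it does in examples, but that is a genuine combinatorial lemma, absent from your write-up. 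The multiplicity count is likewise only a gesture: simple subquotients of $\cD_{\alpha_i}L(\lambda)$ are detected by singular vectors only inside suitable subquotients, not in the module itself, so a Clifford-module bound on singular vectors of weight $\mu^+$ in $\cD_{\alpha_i}L(\lambda)$ does not bound multiplicities. The paper's uniqueness argument is structurally different and worth absorbing: setting $N=(\cD_{\alpha_i}L(\lambda))_{\gg \gl{\rm -ss}}[i-1]$, one localizes back at $\alpha_{i-1}$ and uses the ``round trip'' identity $(\cD_{\alpha_{i-1}}N)_{\gg \gl{\rm -ss}}[i]\simeq M_{\gg \gl{\rm -ss}}/M_{\rm f}$ of Lemma \ref{gl-n-loc-fin} to trap the $\gg \gl_n$-semisimplification of the whole $[i-1]$-part inside $L(\lambda')_{\gg \gl{\rm -ss}}$ for a single $\lambda'$, which forces $(\cD_{\alpha_i}L(\lambda))_{\rm ss}[i-1]=L(\lambda')$ --- existence, uniqueness, and multiplicity one in a single stroke, with no central character or explicit singular vector input.
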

 \begin{proof}
 For a $\gg \gl_n$-module $M$, and $1\leq i \leq n-1$, denote by $M[i]$ the submodule of $M$ consisting of $\gp_i$-locally finite vectors, i.e.
of all $m \in M$ for which for every root $\alpha$ of $\gp_i$ there is positive integer $s$ such that $x^sm =0$ for $x \in \gp_i^{\alpha}$.  We start with the following lemma.

  \begin{lem}{gl-n-loc-fin}
 Let $M$ be a  $\gg \gl_n$-bounded module of finite length all of whose infinite dimensional simple subquotients are highest weight  modules of type $i$, $i>1$. Then $N = (\cD_{\alpha_{i}} M)_{\gg \gl {\rm -ss}}[i-1]$ is a semisimple bounded module  all of whose simple submodules are highest weight  modules of type $i-1$. Furthermore, $(\cD_{\alpha_{i-1}} N)_{\gg \gl {\rm -ss}}[i]  \simeq M_{\gg \gl {\rm -ss}} / M_{\rm f}$, where $M_{\rm f}$ is the direct sum of all finite dimensional subquotients of $M$.
\end{lem}

The proof of the lemma follows from the exactness of the localization functor and the description of the families of integral $\gg \gl_n$-bounded weights in \S \ref{gl-n-fam}.

 Since the two cases in the proposition are proved with the same reasoning, we show the uniqueness of simple subquotients of type $i-1$ only. The statement is equivalent to showing that the $\gq(n)$-module $(\cD_{\alpha_i} L(\lambda))_{\rm ss} [i-1]$ is simple. Let $\dot{L} (\mu) = (\cD_{\alpha_i} \dot{L}(\lambda))_{\gg \gl {\rm -ss}} [i-1]$. By the lemma,  $\dot{L}(\lambda)   \subset (\cD_{\alpha_{i-1}} \dot{L}(\mu))_{\gg \gl-{\rm ss}} [i]$. Let $\lambda'$ be such that $\dot{L} (\mu) \subset L(\lambda')_{\gg \gl-{\rm ss}}$ and $L(\lambda') \subset (\cD_{\alpha_i} L(\lambda))_{\rm ss} [i-1]$. We show that $L(\lambda') = (\cD_{\alpha_i} L(\lambda))_{\rm ss} [i-1]$.

 Let $N =  (\cD_{\alpha_i} L(\lambda))_{\gg \gl-{\rm ss}} [i-1]$. By Lemma \ref{gl-n-loc-fin}  we have
$ (\cD_{\alpha_{i-1}} N)_{\gg \gl {\rm -ss}}[i] \subset L(\lambda)_{\gg \gl {\rm -ss}}$. Hence
$$
\dot{L}(\lambda)   \subset (\cD_{\alpha_{i-1}} \dot{L}(\mu))_{\gg \gl-{\rm ss}} [i] \subset  (\cD_{\alpha_{i-1}} L(\lambda'))_{\gg \gl-{\rm ss}} [i] \subset L(\lambda)_{\gg \gl-{\rm ss}}.
$$
Therefore,  $ (\cD_{\alpha_{i-1}} L(\lambda'))_{\rm ss} [i] $ is a $\gq (n)$-module containing $\dot{L}(\lambda) $ as a $\gg \gl_n$-subquotient and whose $\gg \gl_n$-semisimplification is a submodule of $L(\lambda)_{\gg \gl-{\rm ss}}$. Hence $ (\cD_{\alpha_{i-1}} L(\lambda'))_{\rm ss} [i] =L(\lambda)$. Now using the lemma  we show that
$$
\dot{L}(\lambda') \subset  (\cD_{\alpha_{i}} L(\lambda))_{\gg \gl-{\rm ss}} [i-1] \subset L(\lambda')_{\gg \gl-{\rm ss}},
$$
which eventually implies that $L(\lambda') = (\cD_{\alpha_i} L(\lambda))_{\rm ss} [i-1]$.
 \end{proof}

\subsection{} \begin{prop}{prop-link}   Let $\lambda'$ be a bounded integral weight and  $\xymatrix{
 \lambda \ar[r]^{i} & \lambda'}$, i.e. $L(\lambda)$  is a subquotient of $\cD_{\alpha_i} L(\lambda')$.

(i) If $\mu \in JH(\lambda)$ and $\mu'$ is bounded with $\xymatrix{
 \mu \ar@{-->}[r]^{i} & \mu'}$, then $\mu' \in JH(\lambda')$.

 (ii)  If $\mu$ is bounded of the same type as $\lambda$, $\mu' \in JH(\lambda')$ and $\xymatrix{
 \mu \ar@{-->}[r]^{i} & \mu'}$, then $\mu \in JH(\lambda)$.
\end{prop}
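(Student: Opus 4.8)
The plan is to reduce everything to $\gg \gl_n$ by restriction and to exploit two features of the localization functor. The first is that $\cD_{\alpha_i}$ commutes with restriction to $\gg \gl_n$: since $f_{\alpha_i}$ is an even element of $\gg \gl_n \subset \gq(n)$, for any weight $\gq(n)$-module $M$ one has $(\cD_{\alpha_i} M)|_{\gg \gl_n} \cong \cD_{\alpha_i}(M|_{\gg \gl_n})$, both sides being the $U(\gg \gl_n)$-module obtained by inverting $f_{\alpha_i}$. Combined with the exactness of $\cD_{\alpha_i}$ (see \S\ref{lmnew}) and the finite length of bounded modules, this lets me move freely between $\gq(n)$- and $\gg \gl_n$-statements. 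The second feature I would record is that every $\gg \gl_n$-composition factor $\dot{L}(\rho)$ of a bounded $\gq(n)$-module $L(\eta)$ of type $t$ is again of type $t$: if $f_\alpha$ acts injectively on $L(\eta)$ it acts injectively on the subquotient $\dot{L}(\rho)$, so $-\Delta_{\gn_t} = \Delta^{\rm{inj}} L(\eta) \subseteq \Delta^{\rm{inj}} \dot{L}(\rho)$, and since by \Prop{gl-bounded} the right-hand side is $-\Delta_{\gn_k}$ for a unique $k$ while $\Delta_{\gn_t} \subseteq \Delta_{\gn_k}$ forces $t=k$, we get that $\rho$ is of type $t$. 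In particular $JH(\eta)$ consists only of type-$t$ weights, and $JH(\eta_1) \cap JH(\eta_2) = \emptyset$ whenever $\eta_1, \eta_2$ have different types.

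Next I would isolate the following uniqueness statement: for an integral weight $\mu$ and an index $i$ there is a unique weight $\nu$ such that $\dot{L}(\mu)$ is a $\gg \gl_n$-subquotient of $\cD_{\alpha_i} \dot{L}(\nu)$. The localization $\cD_{\alpha_i} \dot{L}(\nu)$ is obtained by inverting $f_{\alpha_i}$ alone, so by the $\gs \gl_2$-theory in the $\alpha_i$-direction (cf. \S\ref{dotiii}) all its composition factors lie in $\{ \dot{L}(\nu), \dot{L}(s_i \cdot \nu) \}$; hence $\nu \in \{ \mu, s_i \cdot \mu \}$. By \S\ref{dotiii}(i)--(ii), $\nu = \mu$ occurs precisely when $s_i \cdot \mu \not< \mu$ and $\nu = s_i \cdot \mu$ precisely when $s_i \cdot \mu < \mu$; as $\mu$ and $s_i \cdot \mu$ are comparable, exactly one alternative holds and $\nu$ is unique.

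For (i), I would restrict the containment of $L(\lambda)$ as a subquotient of $\cD_{\alpha_i} L(\lambda')$ to $\gg \gl_n$: using the first feature, $\dot{L}(\mu)$ (which lies in $JH(\lambda)$) becomes a $\gg \gl_n$-subquotient of $\cD_{\alpha_i}(L(\lambda')|_{\gg \gl_n})$. By exactness this module is filtered by the $\cD_{\alpha_i} \dot{L}(\nu)$ with $\nu \in JH(\lambda')$, so $\dot{L}(\mu)$ is a subquotient of $\cD_{\alpha_i} \dot{L}(\nu)$ for some $\nu \in JH(\lambda')$. Both this $\nu$ and the given $\mu'$ are then weights to which the uniqueness statement applies (all weights are integral and bounded), so $\mu' = \nu \in JH(\lambda')$.

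For (ii), the existence of the $i$-arrow $\lambda \xrightarrow{i} \lambda'$ forces $\lambda'$ to be of type $i$ (cf. \S\ref{gl-n-fam}, \S\ref{subsec-fam}), so \Prop{prop-loc-fin} applies: the unique simple $\gq(n)$-subquotient of $\cD_{\alpha_i} L(\lambda')$ of the type $t$ of $\lambda$ is $L(\lambda)$ itself, every other simple subquotient having a different type. Since $\mu' \in JH(\lambda')$, the subquotient $\cD_{\alpha_i} \dot{L}(\mu')$ of $\cD_{\alpha_i}(L(\lambda')|_{\gg \gl_n}) = (\cD_{\alpha_i} L(\lambda'))|_{\gg \gl_n}$ contains $\dot{L}(\mu)$, so $\dot{L}(\mu)$ is a $\gg \gl_n$-subquotient of $(\cD_{\alpha_i} L(\lambda'))|_{\gg \gl_n}$. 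By the second feature the type-$t$ $\gg \gl_n$-factors of this module come only from the type-$t$ $\gq(n)$-subquotient $L(\lambda)$, and since $\mu$ has type $t$ we conclude $\mu \in JH(\lambda)$. The hardest point is exactly this last step of (ii): ruling out that $\dot{L}(\mu)$ enters through a $\gq(n)$-subquotient other than $L(\lambda)$, which is what the type hypothesis on $\mu$ together with the uniqueness in \Prop{prop-loc-fin} is designed to prevent; the analogous crux for (i) is the uniqueness of the localization target established above.
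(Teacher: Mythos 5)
Your overall architecture coincides with the paper's: for (i) you restrict to $\gg\gl_n$, run a composition series of $L(\lambda')$ through the exact functor $\cD_{\alpha_i}$, conclude that $\dot{L}(\mu)$ is a subquotient of $\cD_{\alpha_i}\dot{L}(\nu)$ for some $\nu\in JH(\lambda')$, and finish by a uniqueness statement for the target of the dashed arrow; for (ii) you reduce to the uniqueness in \Prop{prop-loc-fin}, exactly as the paper does (the paper phrases it as a proof by contradiction, producing a second type-$(i-1)$ subquotient $L(\lambda_0)$ of $\cD_{\alpha_i}L(\lambda')$). Your preliminary observations (localization commutes with restriction; $\gg\gl_n$-composition factors of a bounded $L(\eta)$ inherit its type via shadows) are correct and are implicitly used in the paper too.

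However, there is a genuine gap in your proof of the key uniqueness lemma in (i). Your rank-one claim --- that all composition factors of $\cD_{\alpha_i}\dot{L}(\nu)$ lie in $\{\dot{L}(\nu),\dot{L}(s_i\cdot\nu)\}$ --- is false, and it is contradicted by the paper's own description of the $\gg\gl_n$-families in \S\ref{gl-n-fam} (quoted from \cite{M}): the leftward dashed arrow there says that $\dot{L}(s_{k+1}s_k\cdot\lambda)$ is a subquotient of $\cD_{\alpha_k}\dot{L}(s_k\cdot\lambda)$, although $s_{k+1}s_k\cdot\lambda$ equals neither $s_k\cdot\lambda$ nor $s_k\cdot(s_k\cdot\lambda)=\lambda$; this is the ``coherent family'' phenomenon invisible to $\gs\gl_2$-theory in the $\alpha_i$-direction (which in any case only controls highest/lowest weight behaviour along one line, not the $\gg\gl_n$-factors). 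Consequently your dichotomy ``$\nu=\mu$ iff $s_i\cdot\mu\not<\mu$, else $\nu=s_i\cdot\mu$'' misidentifies the target: for $\mu=s_{k+1}s_k\cdot\lambda$ and $i=k$ it returns $s_k\cdot\mu=s_ks_{k+1}s_k\cdot\lambda$, which is not even a bounded weight, while the actual bounded target is the family neighbour $s_k\cdot\lambda$. Worse, uniqueness over \emph{all} integral weights, as you state it, is outright false: by \S\ref{dotiii} (ii) one also has $\dot{L}(\mu)\subset\cD_{\alpha_k}\dot{L}(s_ks_{k+1}s_k\cdot\lambda)$ in this example, so two distinct targets exist. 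The correct statement --- which is what the paper invokes (``there is unique \emph{bounded} weight $\eta$ for which $\mu\overset{i}{\dashrightarrow}\eta$'') --- restricts to bounded targets and rests on Mathieu's classification of the coherent families recorded in \S\ref{gl-n-fam}, not on rank-one considerations. The flaw is material, not cosmetic: the proposition is later applied (Lemmas \ref{lem-pos} and \ref{lem-zero}) precisely to arrows $\mu\overset{i}{\dashrightarrow}\mu'$ with $\mu'\notin\{\mu,s_i\cdot\mu\}$, where your lemma would ``prove'' a false identification. (A smaller point in (ii): your appeal to ``the unique simple subquotient of the type of $\lambda$'' needs the case where that type equals $i$ to be excluded, since \Prop{prop-loc-fin} only asserts uniqueness for types $i\pm1$; the paper handles this by citing \S\ref{gl-n-fam} to see that $\lambda$ and $\mu$ are of type $i-1$ or $i+1$.)
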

\begin{proof}
(i) Let $0 = L_0 \subset L_1 \subset...\subset L_m = L(\lambda')$ be a composition series of the $\gg \gl_n$-module $L(\lambda')$. Using the exactness of the localization functor, we obtain a series
$0 \subset \cD_{\alpha_i}L_1 \subset...\subset \cD_{\alpha_i}L_m = \cD_{\alpha_i}L(\lambda')$ whose quotients are $\cD_{\alpha_i} \left( L_j / L_{j-1}\right)$ (possibly zero). On one hand $\dot{L} (\mu)$ is a $\gg\gl_n$-subquotient of $L(\lambda)$, and on the other hand $L(\lambda)$ is a $\gq(n)$-subquotient of $\cD_{\alpha_i}L(\lambda')$. Therefore, there is $j\geq 1$ such that $\dot{L} (\mu)$ is a $\gg\gl_n$-subquotient of  $\cD_{\alpha_i} \left( L_j / L_{j-1}\right)$. But $L_j /L_{j-1}$ is a highest weight module and there is unique bounded weight $\eta$ for which $\xymatrix{
 \mu \ar@{-->}[r]^{i} & \eta}$. Therefore $\eta = \mu' \in JH(\lambda')$.

 (ii) Assume the contrary. Using the explicit description of the $\gg \gl_n$-families in \S \ref{gl-n-fam}, $\lambda$ and $\mu$ are  of type either $i-1$ or $i+1$. Assume that they are of type $i-1$ (the case of type $i+1$ is analogous). Because of our assumptions, there is a weight $\mu_0 \notin JH(\lambda)$ of type $i-1$ for which $\xymatrix{
 \mu_0 \ar@{-->}[r]^{i} & \mu_0'}$ and $\mu_0' \in JH(\lambda')$.  Let $\lambda_0$  be such that $L(\lambda_0)$ is a subquotient of $\cD_{\alpha_i} L(\lambda')$ and $\mu_0 \in JH(\lambda_0)$. In particular, $L(\lambda)$ and $L(\lambda_0)$ are two nonisomophic subquotients of type $i-1$ of $\cD_{\alpha_i} L(\lambda')$. This contradicts to Proposition \ref{prop-loc-fin}.
  \end{proof}

\begin{rem}{}
We conjecture that the above proposition can be generalized to all weights (not necessarily bounded) $\lambda$, $\lambda'$, $\mu$, and $\mu'$. This will be addressed in a future work.
\end{rem}

\section{Examples}

In this section we consider families of bounded modules of $\lambda$ for $\lambda := c \vareps_1$. The modules in these families are of ``small $\gg \gl_n$-length'' in the following sense:  they have $n$ or $n+1$ pairwise nonisomorphic $\gg \gl_n$-subquotients.  We are going to prove the following theorem (see~\S~\ref{gl-n-fam} for notations).

\subsection{} \begin{thm}{th-eps1}
Let $\lambda = c\vareps_1$,.

(i) For $c \in \Z_{>0}$ one has
\begin{eqnarray*}
L(s_{n-1}...s_1* \lambda)_{\gg \gl\rm{-ss}} & =&  \bigoplus_{i=1}^{n} \dot{L}(s_{n-1}...s_i \cdot s_{i-1}...s_1 * \lambda )^{\oplus 2},\\
L(s_{k}...s_1* \lambda)_{\gg \gl\rm{-ss}} & = &  \bigoplus \{ \dot{L}(\mu)^{\oplus 2} \; | \; \xymatrix{
 \mu \ar@{-->}[r]^{k} & \mu'}, \mu' \in JH(s_{k+1}...s_1* \lambda)\} \mbox{ for } k < n-1.
 \end{eqnarray*}

 (ii) For $c \in \Z_{<0}$ one has
\begin{eqnarray*}
L(s_1...s_{n-1}* \lambda)_{\gg \gl\rm{-ss}} & =&  \bigoplus_{i=1}^{n} \dot{L}(s_{1}...s_{i-1} \cdot s_{i}...s_{n-1} * \lambda )^{\oplus 2},\\
L(s_{k}...s_{n-1}* \lambda)_{\gg \gl\rm{-ss}} & = &  \bigoplus \{ \dot{L}(\mu)^{\oplus 2} \; | \; \xymatrix{
 \mu \ar@{-->}[r]^{i} & \mu'}, \mu' \in JH(s_{k-1}...s_{n-1}* \lambda)\} \mbox{ for } k > 1.
 \end{eqnarray*}

(iii) For $c=0$ one has
\begin{eqnarray*}
L(s_{k}* 0)_{\gg \gl \rm{-ss}} = \bigoplus_{i=1}^{n-1} \dot{L}\left(\prod_{j=k}^is_{j} \cdot  0 \right)^{\oplus 2}.
\end{eqnarray*}
(iv) For $c \notin \Z$ one has the same formulas as in (i). In particular,
\begin{eqnarray*}
L(s_{k}...s_1* \lambda)_{\gg \gl\rm{-ss}} & = &  \bigoplus \{ \dot{L}(s_{k+1}...s_{n-1} \cdot \mu)^{\oplus 2} \; | \; \mu \in JH(s_{n-1}...s_1 * \lambda) \} \mbox{ for } k < n-1.
 \end{eqnarray*}
 In the sums in (i) and (ii)  we use the following convention: $s_{n-1}...s_i \cdot \mu = s_i...s_{n-1} * \mu =  \mu$ if $i=n$ and $s_{i-1}...s_1 * \mu= s_1...s_{i-1} \cdot \mu = \mu$ if $i=1$.

\end{thm}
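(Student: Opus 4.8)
The plan is to fix, for each value of $c$, a single ``anchor'' module in the family of $\lambda=c\vareps_1$, to compute its $\ggl_n$-decomposition by hand, and then to transport the answer to every other member of the family by the localization functors, exactly as the recursion in the statement suggests. The anchor will be the explicit differential-operator module $\mathcal{F}_c$ of Example~\ref{fam_bounded}. Its $\ggl_n$-structure is transparent: since the even subalgebra $\gg_{\ol0}\cong\ggl_n$ acts by $\sum_{i,j}a_{ij}\bigl(x_i\partial_{x_j}+\xi_i\partial_{\xi_j}\bigr)$, which preserves the splitting into the bosonic variables $x$ and the fermionic variables $\xi$, as a $\ggl_n$-module $\mathcal{F}_c$ factors as
$$\mathcal{F}_c\ \cong_{\ggl_n}\ \bigoplus_{l=0}^{n}B_{c-l}\otimes\Lambda^l\C^n,$$
where $\Lambda^l\C^n$ is the $l$-th exterior power of the natural module (coming from the $\xi$'s) and $B_{d}$ is the degree-$d$ homogeneous bosonic piece of $x_1^{c}\C[x_1^{\pm1},\dots,x_n^{\pm1}]$. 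This factorization (whose summand dimensions $\binom{n}{l}$ sum to $\deg\mathcal{F}_c=2^n$) is the source of the several $\ggl_n$-constituents. For $c\neq0$ one has $c\vareps_1\notin Q^+$, so \Lem{lemcliff} forces the even and odd parts to have equal $\ggl_n$-characters; this is precisely what produces the uniform even multiplicity, i.e. the $^{\oplus2}$.

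First I would treat $c\in\Z_{>0}$ (case (i)). Here $\lambda$ is a regular $\widetilde{W}$-maximal integral weight with $L(\lambda)$ finite dimensional, and the relevant string consists of the localized modules $L(\prod_{j=k}^{1}s_j*\lambda)$ of type $k$, $1\le k\le n-1$. I would take the most localized member $L(s_{n-1}\cdots s_1*\lambda)$ (type $n-1$, highest weight $c\vareps_n$) as the base, realize it as the appropriate simple $\gq(n)$-subquotient of $\mathcal{F}_c$ (after twisted localization if necessary), and read off its constituents from the displayed factorization; matching the graded summands to the mixed dot--star weights $s_{n-1}\cdots s_i\cdot s_{i-1}\cdots s_1*\lambda$, $1\le i\le n$, establishes the first formula in (i). For $k<n-1$ I would then descend by downward induction on $k$: the type-$k$ module $L(\prod_{j=k}^{1}s_j*\lambda)$ appears as the unique type-$k$ subquotient of the $\gq(n)$-localization of $L(\prod_{j=k+1}^{1}s_j*\lambda)$ at the appropriate simple root (\Prop{prop-loc-fin}), and exactness of the localization functor together with \Lem{gl-n-loc-fin} expresses its $\ggl_n$-semisimplification through that of $L(\prod_{j=k+1}^{1}s_j*\lambda)$. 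The index-shift relation recorded in the Remark following \S\ref{gl-n-fam} reconciles the localization index with the arrow label $k$ in the stated recursion; this is exactly the content of \Prop{prop-link}.

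The remaining cases are variations on this. Case (iv), $c\notin\Z$, runs on the same anchor $\mathcal{F}_c$, the only change being that the relevant root directions are now nonintegral, so \Prop{propqloc} is used in place of the integral localization and the constituents appear as the pure dot-translates $\dot{L}(s_{k+1}\cdots s_{n-1}\cdot\mu)$ with $\mu\in JH(s_{n-1}\cdots s_1*\lambda)$, which gives the explicit form in (iv). Case (ii), $c\in\Z_{<0}$, I would obtain as the mirror image of (i): the automorphism $\iota$ of \S\ref{iota} sends $s_i\mapsto s_{n-i}$ and reverses each increasing string, carrying the formulas of (i) to those of (ii), and combined with $L(\lambda)\cong\Pi L(\lambda)$ (valid since $r=1$ is odd) this avoids redoing the computation. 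Case (iii), $c=0$, is the degenerate one: since $0$ is atypical in every direction the $*$-action coincides with the dot-action throughout the orbit and the family collapses ($s_is_{i+1}*0=s_{i+1}s_i*0$); here \Lem{lemcliff} no longer applies ($0\in Q^+$), so I would instead use the explicit realization of $L(s_1\cdot0)$ and $N'$ inside $\overline{\mathcal{F}_0}/\C$ from the Remark after \Lem{q3-loc}, together with \Lem{q3-loc} itself, as the anchor, the multiplicity $2$ now coming from the Clifford module on a highest weight space with $r=2$ nonzero coordinates; propagating by dot-action localization then yields $\bigoplus_{i=1}^{n-1}\dot{L}(\prod_{j=k}^{i}s_j\cdot0)^{\oplus2}$.

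The main obstacle is the anchor computation, not the propagation. Once the base $\ggl_n$-decomposition is known, the inductive step is essentially formal, resting on exactness of $\cD_\alpha$, the uniqueness of subquotients of each type in \Prop{prop-loc-fin}, and the transfer principle \Prop{prop-link}. Pinning down the base, however, requires (a) isolating the correct simple $\gq(n)$-subquotient of $\mathcal{F}_c$, since the factorization above computes only the total $\ggl_n$-character and its constituents must still be distributed among the several simple $\gq(n)$-subquotients carried by $\mathcal{F}_c$; (b) decomposing each tensor product $B_{c-l}\otimes\Lambda^l\C^n$ into simple $\ggl_n$-modules without spurious factors or cancellations; and (c) verifying that every surviving constituent occurs with multiplicity exactly $2$ rather than a larger even number. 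Point (c) is the delicate one: for $c\neq0$ the even--odd balance of \Lem{lemcliff} only guarantees an even multiplicity, so the exact value $2$ must be pinned down by matching Jordan--H\"older multiplicities against the Clifford-module dimensions $2^{\lceil r/2\rceil}$ on the $\ggl_n$-highest weight spaces that actually appear.
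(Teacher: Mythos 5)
Your overall architecture---anchor the family at the explicit module $\mathcal{F}_c$, use \Lem{lemcliff} for even multiplicities, and propagate along the family by localization via \Prop{prop-loc-fin} and \Prop{prop-link}---matches the paper, and your reduction of (ii) to (i) via $\iota$ is exactly what the paper does. The genuine gap is in the base case, which you yourself flag as points (a), (b), (c) without resolving them: your plan is to decompose $\mathcal{F}_c\cong\bigoplus_l B_{c-l}\otimes\Lambda^l\C^n$ as a $\ggl_n$-module and then distribute constituents among the several simple $\gq(n)$-subquotients, but you give no method for this distribution or for pinning the multiplicity at exactly $2$. The paper never decomposes $\mathcal{F}_c$ at all; it closes the base case by a degree squeeze that you are missing. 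Concretely: \Lem{lem-pos} (induction along $\lambda\xrightarrow{1}s_1*\lambda\xrightarrow{2}\cdots$ using \Prop{prop-link}(i)) shows that all $n$ weights $s_{n-1}\cdots s_i\cdot s_{i-1}\cdots s_1*\lambda$ lie in $JH(s_{n-1}\cdots s_1*\lambda)$; the degree formula \Prop{deg-for} gives $\sum_{i=1}^n\deg\dot{L}(s_{n-1}\cdots s_i\cdot s_{i-1}\cdots s_1*\lambda)=\sum_{i=1}^n\binom{n-1}{i-1}=2^{n-1}$; together with multiplicity $\geq 2$ from \Lem{lemcliff} and the shadow comparison this forces $\deg L(s_{n-1}\cdots s_1*\lambda)\geq 2^n$, while \Lem{sub_fam} (explicit primitive vectors $x_1^{c-k}\xi_1\cdots\xi_k$) gives $\deg\leq\deg\mathcal{F}_c=2^n$. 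Equality then simultaneously fixes the multiplicity at exactly $2$, excludes any further infinite-dimensional constituents, and finite-dimensional ones are ruled out because no dominant integral weight lies below $c\vareps_n$. Without \Prop{deg-for} and this count, your point (c) is not merely ``delicate'' but open: Clifford dimensions on highest weight spaces control only the top $\ggl_n$-constituent of each $\gq(n)$-subquotient, not the lower ones.

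Your case $c=0$ has a second, independent gap plus a wrong premise. The assertion that \Lem{lemcliff} ``no longer applies'' is incorrect: what matters is the highest weight of the module under consideration, and $s_k*0=s_k\cdot 0=-\alpha_k\notin Q^+$, so the lemma does apply, and the paper uses it in \Lem{deg-zero} exactly as in \Lem{deg-pos}. What genuinely changes at $c=0$ is the \emph{upper} bound: the target degree is $2^{n-1}$, while $\deg\mathcal{F}_0=2^n$, so embedding into $\mathcal{F}_0$ alone cannot close the squeeze. The paper's essential new ingredient---absent from your proposal---is the odd operator $J=\sum_{i=1}^n\bigl(x_i\frac{\partial}{\partial\xi_i}-\xi_i\frac{\partial}{\partial x_i}\bigr)$, which commutes with $\gq(n)$, squares to zero on $\mathcal{F}_0$, and satisfies $\mathcal{F}_0/\mathcal{F}_0^J\simeq\mathcal{F}_0^J$; hence every simple subquotient of $\mathcal{F}_0$ is a subquotient of $\mathcal{F}_0^J$, whose degree is $2^{n-1}$ (\Lem{sub_fam_zero}). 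Your substitutes---the $\gq(3)$ realization from the remark after \Lem{q3-loc} and a Clifford count with $r=2$---supply neither this bound for general $n$ nor any replacement mechanism, so part (iii) of your argument would not close as written.
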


\subsection{Examples} Consider the case $\gq (4)$.

\subsubsection{}
Consider the weight $\lambda = 2 \vareps_1$. Then Theorem \ref{th-eps1} (i) implies
\begin{eqnarray*}
JH(s_3s_2s_1*\lambda) & = &  \{ s_3s_2s_1*\lambda, s_3\cdot s_2s_1*\lambda, s_3s_2\cdot s_1*\lambda, s_3s_2s_1 \cdot \lambda\}; \\
JH(s_2s_1*\lambda) & = &  \{ s_3 \cdot s_3s_2s_1*\lambda, s_2s_1*\lambda, s_2\cdot s_1*\lambda, s_2s_1 \cdot \lambda\}; \\
JH(s_1*\lambda) & = &  \{ s_1s_3 \cdot s_3s_2s_1*\lambda, s_2 \cdot s_2s_1*\lambda,  s_1*\lambda, s_1 \cdot \lambda\}.
\end{eqnarray*}

Recall that each of the $\gg \gl_n$-subquotients of $L(s_i...s_1* \lambda)$ has multiplicity $2$. Also,  $JH(\lambda) = \{s_1 \cdot s_1*\lambda, \lambda\}.$

\subsubsection{}
Consider the weight $\lambda = c \vareps_1$, $c \notin \Z$. From Theorem \ref{th-eps1} (iv) we have the following table.

$$\begin{tabular}{|l | l|}
\hline
$\gq(4)$-weights & Highest weights of the $\gg \gl_4$-submodules \\
\hline
$\lambda$ & $\lambda,\  s_1\cdot s_1 *\lambda,\  s_1s_2 \cdot s_2s_1 *\lambda,\  s_1s_2s_3 \cdot s_3s_2s_1*\lambda $ \\
\hline
$s_1 * \lambda$ & $s_1 \cdot \lambda, \ s_1 *\lambda,\  s_2 \cdot s_2s_1 *\lambda,\  s_2s_3 \cdot s_3s_2s_1*\lambda $ \\
\hline
$s_2s_1 * \lambda$& $s_2s_1 \cdot \lambda, \ s_2 \cdot s_1 *\lambda,\  s_2s_1 *\lambda,\  s_3 \cdot s_3s_2s_1*\lambda $ \\
\hline
$s_3s_2s_1 * \lambda$ & $s_3s_2s_1 \cdot \lambda, \ s_3s_2 \cdot s_1 *\lambda,\  s_3 \cdot s_2s_1 *\lambda,\  s_3s_2s_1*\lambda $\\
\hline
\end{tabular}$$
In particular, for $\mu = s_3s_2s_1 * \lambda = c \varepsilon_4$,
$$L(\mu) = \dot{L} (\mu)^{\oplus 2} \oplus  \dot{L} (\mu - \alpha_3)^{\oplus 2} \oplus \dot{L} (\mu - \alpha_2 - 2\alpha_3)^{\oplus 2} \oplus \dot{L} (\mu - \alpha_1 - 2\alpha_2 - 3\alpha_3)^{\oplus 2}. $$

\subsection{}\label{c-fam} The rest of this section is devoted to the proof of Theorem \ref{th-eps1}.

Retain notation of~\S~\ref{sec_arrow}. Observe that
the weight $\lambda = c \vareps_1$ is  regular integral if $c \in \Z$, and nonintegral otherwise. In the case $c \in \Z_{>0}$ (respectively, $c \in \Z_{<0}$) there are two regular integral families of $\lambda$ listed below - one of regularity $1$ (resp., $n-1$), and another of regularity $2, 3, ..., n$ (resp., $1,2,...,n-2$). The regular integral family of $\lambda$ of regularity $1$ for $c \in \Z_{>0}$ can be described by the graph
\begin{equation} \label{fam-pos}
s_{1}*\lambda \xrightarrow{2} s_2s_1*\lambda \xrightarrow{3}...  ...  \xrightarrow{n-1}s_{n-1}...s_1* \lambda.
\end{equation}
If $c \in \Z_{<0}$, then $L(\lambda)$ is a part of the regular integral family of $\lambda' = c \vareps_n = \iota (\lambda)$ of regularity $n-1$:
$$
s_{1}...s_{n-1}* \lambda' \xleftarrow{1} s_{2}...s_{n-1}*\lambda' \xleftarrow{3}...  ...  \xleftarrow{n-2} s_{n-1}* \lambda'
$$
 In addition we have $\lambda \xrightarrow{1} s_1*\lambda$ and $ s_{n-1}* \lambda' \xleftarrow{n-1} \lambda' $.

If $c =0$ then the regular integral family of $\lambda$ of regularity $1,2,...,n-1$:
\begin{equation} \label{fam-zero}
\xymatrix{s_1 * \lambda
 \ar@<0.5ex>[r]^2 & s_2 * \lambda \ar@<0.5ex>[l]^1
\ar@<0.5ex>[r]^{3 \! \! }& \ar@<0.5ex>[l]^{2\! \! \!} ...   \ar@<0.5ex>[r]^{n-1 \hspace{.5cm}}
&  \ar@<0.5ex>[l]^{n-2 \hspace{.5cm}} s_{n-1}* \lambda}
\end{equation}
Finally, in the case $c \notin \Z$, the nonintegral family of the bounded nonitegral weight $\lambda$ of type $1$ is
\begin{equation} \label{fam-nonint}
\xymatrix{ \lambda
 \ar@<0.5ex>[r]^{\! \! \! \! \! \! 1} & s_1 * \lambda  \ar@<0.5ex>[l]^{\! \! \! \! \! \!1}
\ar@<0.5ex>[r]^{2 \! \! \! \!}& \ar@<0.5ex>[l]^{2\! \! \! \!} ...   \ar@<0.5ex>[r]^{n-1 \hspace{1cm}}
&  \ar@<0.5ex>[l]^{n-1 \hspace{1cm}} s_{n-1}...s_1* \lambda}
\end{equation}
In what follows we will describe the $\gg \gl_n$-subquotients of the modules in the four families above.

\subsection{}  We will use  a degree formula for the $\gg \gl_n$-bounded modules of highest type. Considering a weight $\mu$ as a weight of ${\gg \gl_{n-1}\times \gg \gl_1} $, by $ \dot{L}_{\gg \gl_{n-1}\times \gg \gl_1} (\mu)$ we denote the corresponding simple $(\gg \gl_{n-1}\times \gg \gl_1)$-module.
The proof of the following proposition follows from  the fact that the parabolically induced module  from  $ \dot{L}_{\gg \gl_{n-1}\times \gg \gl_1} (\mu)$ is simple if $\mu$ is
nonintegral or singular and the module has length $2$ if $\mu$ is regular integral (see Lemma 11.2 in \cite{M} for details).

\subsubsection{}\begin{prop}{deg-for}
Let $\mu$ be a $\gg \gl_n$-bounded weight of type $n-1$.

 If $\mu $ is singular  or nonintegral, then
$\deg \dot{L} (\mu) = \dim \dot{L}_{\gg \gl_{n-1}\times \gg \gl_1} (\mu)$.

If $\mu$ is regular integral, then $\mu = s_{n-1}...s_k \cdot \eta$  where $\eta$ is a $\gg \gl_n$-dominant integral weight and one has
$$
\deg \dot{L} (s_{n-1}...s_k \cdot \eta) = \sum_{i\geq k} (-1)^{i-k}\dim \dot{L}({s_{n-1}...s_i \cdot \eta})
$$

\end{prop}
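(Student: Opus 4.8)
The plan is to realise $\dot{L}(\mu)$ as the simple quotient of a parabolically induced module and to read off its degree from the abelian structure of the nilradical. Since $\mu$ is of type $n-1$, every root of the Levi $\gg \gl_{n-1}\times \gg \gl_1$ acts locally finitely on $\dot{L}(\mu)$, so the subspace generated from a highest weight vector under the Levi is the finite-dimensional module $E:=\dot{L}_{\gg \gl_{n-1}\times \gg \gl_1}(\mu)$. The injective directions are exactly $-\Delta_{\gn_{n-1}}$, hence $\dot{L}(\mu)$ is a quotient of $P(\mu):=\Ind_{\gp_{n-1}}^{\ggl_n} E$, where $\gn_{n-1}$ acts by zero on $E$. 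By PBW, $P(\mu)\cong S(\gn_{n-1}^{-})\otimes E$ as $\gh_{\bar 0}$-modules, and the decisive point is that $\gn_{n-1}^{-}$ is abelian with weights $\vareps_n-\vareps_i$, $1\le i\le n-1$, that are linearly independent.

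First I would compute $\deg P(\mu)=\dim E$. Given $\nu$ and a weight $\tau$ of $E$, the monomial exponents $a_i$ in $\nu=\tau-\sum_{i<n}a_i(\vareps_i-\vareps_n)$ are uniquely determined by linear independence, so each $\tau$ contributes at most one basis vector to $P(\mu)^\nu$, whence $\dim P(\mu)^\nu\le\dim E$ for all $\nu$; and for $\nu=\mu-K\sum_{i<n}(\vareps_i-\vareps_n)$ with $K\gg 0$ all the $a_i$ are forced to be nonnegative integers, so every $\tau$ contributes and $\dim P(\mu)^\nu=\dim E$. Thus $\deg P(\mu)=\dim E=\dim\dot{L}_{\gg \gl_{n-1}\times \gg \gl_1}(\mu)$. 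Invoking Lemma~11.2 of~\cite{M}, for $\mu$ singular or nonintegral $P(\mu)$ is simple, so $\dot{L}(\mu)=P(\mu)$ and the first assertion is immediate.

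For regular integral $\mu=s_{n-1}\cdots s_k\cdot\eta$, Lemma~11.2 of~\cite{M} gives that $P(\mu)$ has length two, with $\dot{L}(\mu)$ its head and the simple module of the neighbouring member of the family (in the sense of~\S\ref{gl-n-fam}) as the remaining factor, so that $[P(\mu)]=[\dot{L}(\mu)]+[\dot{L}(s_{n-1}\cdots s_{k'}\cdot\eta)]$ in the Grothendieck group for the adjacent regularity $k'$. The degree is additive along this short exact sequence: all members of the family are bounded modules whose weight multiplicities stabilise to their degrees on one common translated subcone of $-\sum_{i<n}\Z_{\geq 0}(\vareps_i-\vareps_n)$, so evaluating all three characters at a single sufficiently deep $\nu$ yields $\deg P(\mu)=\deg\dot{L}(\mu)+\deg\dot{L}(s_{n-1}\cdots s_{k'}\cdot\eta)$. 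Hence $\deg\dot{L}(\mu)=\dim\dot{L}_{\gg \gl_{n-1}\times \gg \gl_1}(\mu)-\deg\dot{L}(s_{n-1}\cdots s_{k'}\cdot\eta)$, and iterating this recursion along the chain of parabolic Verma modules $P(s_{n-1}\cdots s_i\cdot\eta)$ — terminating at the extreme member whose parabolic Verma is already simple — telescopes into the asserted alternating sum, using $\deg P(s_{n-1}\cdots s_i\cdot\eta)=\dim\dot{L}_{\gg \gl_{n-1}\times \gg \gl_1}(s_{n-1}\cdots s_i\cdot\eta)$ at each node. Equivalently, the alternating sum is the shadow in weight multiplicities of the BGG-type resolution of $\dot{L}(\mu)$ by parabolic Verma modules for the cominuscule parabolic $\gp_{n-1}$.

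I expect the main obstacle to be the additivity of degrees: unlike vector-space dimensions, the maximal weight multiplicity is not automatically additive in short exact sequences, so I must show that every simple subquotient of $P(\mu)$ attains its degree on a common deep subcone and that these subcones can be chosen uniformly across the whole family. The second delicate point is the bookkeeping of composition factors — which neighbouring regularity occurs as the second factor of $P(\mu)$, and at which member the recursion terminates — so that the signs and the index range of the telescoped sum come out exactly as stated; here I would lean on the explicit description of the integral $\gg \gl_n$-families in~\S\ref{gl-n-fam} together with Lemma~11.2 of~\cite{M} to fix the orientation.
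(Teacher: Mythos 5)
Your proposal is correct and takes essentially the same route as the paper: the paper disposes of this proposition in a single sentence, observing that the parabolically induced module from $\dot{L}_{\gg \gl_{n-1}\times \gg \gl_1}(\mu)$ is simple when $\mu$ is singular or nonintegral and has length two when $\mu$ is regular integral (citing Lemma 11.2 of \cite{M}), which is precisely your decomposition of $P(\mu)$. The details you supply beyond that citation --- the computation $\deg P(\mu)=\dim E$ from the abelian nilradical with linearly independent weights, the stabilization of weight multiplicities on a deep translated cone justifying additivity of degrees, and the telescoping of the length-two recursion terminating at the extreme simple parabolic Verma --- are exactly what the paper leaves implicit in its appeal to \cite{M} and the family description of \S\ref{gl-n-fam}, so your write-up is, if anything, more complete than the paper's own proof.
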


\subsection{The case of $c \in \Z$, $c \neq 0$}

Since  $\dot{L} (\mu)$ is a $\gg \gl_n$-subquotient of $L(s_i...s_{n-1} * \lambda)$ if and only if $\dot{L} (\iota(\mu))$ is a $\gg \gl_n$-subquotient of $L(s_1...s_{n-i} * \iota (\lambda))$, it is enough to consider just the case $c \in \Z_{>0}$.

\subsubsection{} \begin{lem}{lem-pos}
Let $\lambda = c\vareps_1$, $c \in \Z_{>0}$. Then $s_{n-1}...s_i \cdot s_{i-1}...s_1 * \lambda \in JH (s_{n-1}...s_1*\lambda)$ for every $i = 1,...,n-1$.
\end{lem}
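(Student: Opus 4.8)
The plan is to transport the conclusion along the localization chain of the family~(\ref{fam-pos}). Writing $\lambda=c\vareps_1$, one checks directly that $c\vareps_{m+1}=s_m\cdots s_1*\lambda$ for $0\le m\le n-1$, and~\Prop{propqloc} provides the $\gq(n)$-arrows $c\vareps_m\xrightarrow{\,m\,}c\vareps_{m+1}$ (i.e.\ $L(c\vareps_m)$ is a subquotient of $\cD_{\alpha_m}L(c\vareps_{m+1})$) for $1\le m\le n-1$: for $m\ge 2$ these are the arrows of~(\ref{fam-pos}), and for $m=1$ it is the extra arrow $\lambda\xrightarrow{\,1\,}c\vareps_2$. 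I would set, for $1\le i\le m\le n-1$,
$$\nu_i^{(m)}:=(s_ms_{m-1}\cdots s_i)\cdot(c\vareps_i),$$
and note that $c\vareps_i=s_{i-1}\cdots s_1*\lambda$, so that $\nu_i^{(n-1)}=s_{n-1}\cdots s_i\cdot s_{i-1}\cdots s_1*\lambda$ is precisely the weight in the statement. Hence the lemma is the case $m=n-1$ of the assertion $H(m)$: \emph{$\nu_i^{(m)}\in JH(c\vareps_{m+1})$ for every $1\le i\le m$.}

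A direct computation with the $\gg\gl_n$ dot action yields the explicit form
$$\nu_i^{(m)}=-(\vareps_i+\vareps_{i+1}+\cdots+\vareps_m)+(c+m+1-i)\,\vareps_{m+1},$$
whence $s_m\cdot\nu_i^{(m)}=\nu_i^{(m-1)}$ for $i<m$ and $s_m\cdot\nu_m^{(m)}=c\vareps_m$, and moreover
$$\nu_i^{(m-1)}-\nu_i^{(m)}=(c+m+1-i)\,\alpha_m\in Q^+,\qquad c\vareps_m-\nu_m^{(m)}=(c+1)\,\alpha_m\in Q^+ .$$
Thus in every case $s_m\cdot\nu_i^{(m)}>\nu_i^{(m)}$, so by~\S\ref{dotiii}(ii) the module $\dot{L}(s_m\cdot\nu_i^{(m)})$ is a $\gg\gl_n$-subquotient of $\cD_{\alpha_m}\dot{L}(\nu_i^{(m)})$; equivalently there are $\gg\gl_n$-localization arrows of color $m$ from $\nu_i^{(m-1)}$ to $\nu_i^{(m)}$ (for $i<m$) and from $c\vareps_m$ to $\nu_m^{(m)}$ (for $i=m$). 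Each $\nu_i^{(m)}$ is $\gg\gl_n$-bounded, being a single dot-reflection of a bounded weight lying in one of the families of~\S\ref{gl-n-fam}.

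I would then prove $H(m)$ by induction on $m$, in both cases invoking~\Prop{prop-link}(i) for the $\gq(n)$-arrow $c\vareps_m\xrightarrow{\,m\,}c\vareps_{m+1}$. For the index $i=m$: since $c\vareps_m$ is the highest weight of $L(c\vareps_m)$ we have $c\vareps_m\in JH(c\vareps_m)$, and combining this with the color-$m$ arrow from $c\vareps_m$ to $\nu_m^{(m)}$ gives $\nu_m^{(m)}\in JH(c\vareps_{m+1})$; this already establishes the base case $m=1$. For $1\le i\le m-1$ (so $m\ge 2$), the inductive hypothesis $H(m-1)$ gives $\nu_i^{(m-1)}\in JH(c\vareps_m)$, and the color-$m$ arrow from $\nu_i^{(m-1)}$ to $\nu_i^{(m)}$ then yields $\nu_i^{(m)}\in JH(c\vareps_{m+1})$. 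This proves $H(m)$ for all $m\le n-1$, and $H(n-1)$ is exactly the lemma.

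The only delicate point is the bookkeeping: one must verify from the explicit coordinates that $s_m\cdot\nu_i^{(m)}$ is the \emph{larger} of the two weights, so that~\S\ref{dotiii}(ii) applies and the resulting $\gg\gl_n$-arrow points in the direction required by~\Prop{prop-link}(i), and that its color equals the color $m$ of the family arrow $c\vareps_m\xrightarrow{\,m\,}c\vareps_{m+1}$. Once these directions and colors are matched, the argument is simply the transport of $JH$-membership one step at a time along the chain $c\vareps_2,c\vareps_3,\dots,c\vareps_n$.
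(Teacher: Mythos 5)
Your proof is correct and is essentially the paper's own argument: the paper likewise proves the stronger statement $s_k\cdots s_i\cdot s_{i-1}\cdots s_1*\lambda\in JH(s_k\cdots s_1*\lambda)$ by induction on $k$, transporting $JH$-membership one step along the chain $c\vareps_1,c\vareps_2,\ldots,c\vareps_n$ via Proposition~\ref{prop-link}(i), with the base case using the extra arrow $\lambda\xrightarrow{1}s_1*\lambda$ and the new top weight handled by $c\vareps_m\in JH(c\vareps_m)$. The only difference is cosmetic: you make explicit the coordinate formula $\nu_i^{(m)}=-(\vareps_i+\cdots+\vareps_m)+(c+m+1-i)\vareps_{m+1}$ and the resulting inequalities (which are correct), where the paper merely asserts that the relevant weights are distinct and that the dashed $\ggl_n$-arrows of color $k+1$ exist.
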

\begin{proof}
We use extensively Proposition \ref{prop-link} (i). We prove by induction a stronger statement:   for every $k=1,...,n-1$, $s_{k}...s_i \cdot s_{i-1}...s_1 * \lambda \in JH (s_{k}...s_1*\lambda)$ for every $i = 1,...,k$. For $k=1$,  we use that $\lambda \xrightarrow{1} s_1*\lambda$ by \S \ref{c-fam} and that $\xymatrix{ \lambda
\ar@{-->}[r]^{1} & s_1 \cdot \lambda  }$ by \S \ref{gl-n-fam}.  Since $\lambda \in JH (\lambda)$, Proposition \ref{prop-link} (i) implies that $s_1 \cdot \lambda \in JH (s_1*\lambda)$. This together with $s_1 * \lambda \in JH (s_1*\lambda)$ proves the case $k=1$. Assume that  $s_{k}...s_i \cdot s_{i-1}...s_1 * \lambda \in JH (s_{k}...s_1*\lambda)$.  It is not difficult to check that $s_{k+1}...s_i \cdot s_{i-1}...s_1 * \lambda \neq s_{k}...s_i \cdot s_{i-1}...s_1 * \lambda$. Thus we  have
$$
s_{k}...s_1*\lambda \xrightarrow{k+1} s_{k+1}...s_1*\lambda; \;
\xymatrix{ s_{k}...s_i \cdot s_{i-1}...s_1 * \lambda
\ar@{-->}[r]^{k+1} & s_{k+1}...s_i \cdot s_{i-1}...s_1 * \lambda}
$$
Now by the induction hypothesis and Proposition \ref{prop-link} (i) we obtain
$s_{k+1}...s_i \cdot s_{i-1}...s_1 * \lambda \in JH (s_{k+1}...s_1*\lambda)$ for $i=1,...,k$. The case $i=k+1$ is obvious.
\end{proof}

\subsubsection{} Recall that for $i =n$, $s_{n-1}...s_i \cdot s_{i-1}...s_1 * \lambda:= s_{n-1}...s_1 * \lambda.$

 \begin{lem}{deg-pos}
Let $\lambda = c\vareps_1$, $c \in \Z_{>0}$.  Then $\sum_{i=1}^n \deg \dot{L}(s_{n-1}...s_i \cdot s_{i-1}...s_1 * \lambda ) = 2^{n-1}$. In particular, $\deg L(s_{n-1}...s_1*\lambda) \geq 2^n$, and equality holds if and only if $L(s_{n-1}...s_1* \lambda)_{\gg \gl\rm{-ss}} = \bigoplus_{i=1}^{n} \dot{L}(s_{n-1}...s_i \cdot s_{i-1}...s_1 * \lambda )^{\oplus 2}$.
\end{lem}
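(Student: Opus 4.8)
The plan is to make the weights $\mu_i:=s_{n-1}\ldots s_i\cdot s_{i-1}\ldots s_1*\lambda$ explicit, compute each $\deg\dot{L}(\mu_i)$ via \Prop{deg-for}, sum them to $2^{n-1}$, and then deduce the bound on $\deg L(s_{n-1}\ldots s_1*\lambda)$ from \Lem{lem-pos} together with a Clifford--parity doubling. Since $\lambda=c\vareps_1$ with $c>0$, at every step the intermediate weight is $c\vareps_j$ with $(c\vareps_j,\ol{\alpha_j})=c\neq0$, so the $*$-action is the ordinary reflection and $s_{i-1}\ldots s_1*\lambda=c\vareps_i$. Applying the dot action $s_{n-1}\ldots s_i\cdot$ to $c\vareps_i$ and using $\rho=(n-1,\ldots,1,0)$ gives
\[
\mu_i=(\underbrace{0,\ldots,0}_{i-1},\underbrace{-1,\ldots,-1}_{n-i},\,n-i+c),
\]
consistent with the conventions $\mu_1=s_{n-1}\ldots s_1\cdot\lambda$ and $\mu_n=c\vareps_n$.

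Then $\mu_i+\rho=(n-1,\ldots,n-i+1,\,n-i-1,\ldots,1,0,\,n-i+c)$ has strictly decreasing first $n-1$ coordinates while its last coordinate $n-i+c$ is $\ge$ its $(n-1)$-st; hence $\alpha_{n-1}$ is the unique simple root on which $f_\alpha$ acts injectively, so $\mu_i$ is $\ggl_n$-bounded of type $n-1$. The Levi factor of $\gp_{n-1}$ is $\ggl_{n-1}\times\ggl_1$, and the associated $\ggl_{n-1}$-highest weight $(0^{\,i-1},(-1)^{\,n-i})$ is dominant with $\dim\dot{L}_{\ggl_{n-1}\times\ggl_1}(\mu_i)=\binom{n-1}{i-1}$. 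I now apply \Prop{deg-for}. If $1\le c\le i-1$ then $n-i+c$ already occurs among the first $i-1$ entries of $\mu_i+\rho$, so $\mu_i$ is singular and \Prop{deg-for} gives $\deg\dot{L}(\mu_i)=\binom{n-1}{i-1}$ directly. If $c\ge i$ then $\mu_i$ is regular but antidominant along the cominiscule roots, since $(\mu_i+\rho,\vareps_j-\vareps_n)\le(n-1)-(n-i+c)=i-1-c<0$ for $j<n$; hence the generalized Verma module $\Ind_{\gp_{n-1}}^{\ggl_n}\dot{L}_{\ggl_{n-1}\times\ggl_1}(\mu_i)$ is simple, equals $\dot{L}(\mu_i)$, and the alternating sum of \Prop{deg-for} collapses to its leading term $\binom{n-1}{i-1}$. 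In either case $\deg\dot{L}(\mu_i)=\binom{n-1}{i-1}$, so $\sum_{i=1}^n\deg\dot{L}(\mu_i)=\sum_{j=0}^{n-1}\binom{n-1}{j}=2^{n-1}$.

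For $L:=L(s_{n-1}\ldots s_1*\lambda)=L(c\vareps_n)$, \Lem{lem-pos} shows each $\mu_i\in JH(s_{n-1}\ldots s_1*\lambda)$, i.e.\ $\dot{L}(\mu_i)$ is a $\ggl_n$-subquotient of $L$. Since $c\vareps_n\notin Q^+$, \Lem{lemcliff} gives $\ch L_{\ol0}=\ch L_{\ol1}$; as $\ggl_n$ preserves parity this forces $[L_{\ol0}]=[L_{\ol1}]$ in the Grothendieck group, so every $\dot{L}(\mu_i)$ occurs in $L$ with even multiplicity $m_i\ge2$. Moreover $L$ itself has type $n-1$ shadow (it is $\prod_{j=n-1}^1 s_j*\lambda$, type $n-1$ by \Thm{thmq}), so \emph{every} $\ggl_n$-subquotient of $L$ is of type $n-1$ and hence attains its maximal weight multiplicity at a common weight $\nu$ deep in the cone spanned by $\{\vareps_n-\vareps_j\}_{j<n}$. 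Therefore $\deg L\ge\dim L^\nu\ge\sum_i m_i\,\dim\dot{L}(\mu_i)^\nu\ge2\sum_i\deg\dot{L}(\mu_i)=2^n$. If $L_{\ggl\rm{-ss}}=\bigoplus_{i=1}^n\dot{L}(\mu_i)^{\oplus2}$ then $\deg L=\max_\nu\sum_i2\dim\dot{L}(\mu_i)^\nu=2^n$ by the same common-weight evaluation; conversely, equality $\deg L=2^n$ forces every inequality above to be sharp, so $m_i=2$ for all $i$ and no further subquotient occurs (any extra type-$n-1$ factor would contribute positively at $\nu$), giving the stated ``if and only if''.

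The step I expect to be the main obstacle is the degree computation for the regular weights $c\ge i$: one must be certain that \Prop{deg-for}'s alternating sum genuinely collapses, which is precisely the antidominance/simplicity check above. A secondary technical point is the common-weight claim used both for the lower bound and for the equality case; it is intuitively forced by the shared type-$n-1$ shadow but should be backed by an explicit choice such as $\nu=c\vareps_n-K\sum_{j<n}(\vareps_j-\vareps_n)$ for $K\gg0$, together with the description of the weight support of bounded type-$n-1$ modules.
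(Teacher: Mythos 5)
Your proposal follows essentially the same route as the paper: the explicit form of the weights $\mu_i$ (your coordinates $(0^{\,i-1},(-1)^{\,n-i},\,n-i+c)$ are correct; the paper's displayed $(0,\ldots,0,-1,\ldots,-1,c+i)$ is in fact a typo), the degree count via \Prop{deg-for} summing to $2^{n-1}$, multiplicity at least $2$ for each $\dot{L}(\mu_i)$ from \Lem{lem-pos} together with the parity argument of \Lem{lemcliff}, and the common-shadow/deep-weight additivity to get $\deg L\geq 2^n$. Your handling of the regular case $c\geq i$ is actually more careful than the paper's one-line invocation of \Prop{deg-for}: checking that $(\mu_i+\rho,\beta)<0$ for all $\beta\in\Delta_{\gn_{n-1}}$, so that the parabolic Verma module is simple and $\deg\dot{L}(\mu_i)=\dim\dot{L}_{\ggl_{n-1}\times\ggl_1}(\mu_i)=\binom{n-1}{i-1}$, is exactly the content hiding behind the ``collapse'' of the alternating sum (these $\mu_i$ sit at the bottom of their $W$-string), and it is the right thing to verify. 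Your secondary worry about the common deep weight $\nu$ is not where the danger lies; for finitely many bounded type-$(n-1)$ modules the evaluation at $\nu=c\vareps_n-K\sum_{j<n}(\vareps_j-\vareps_n)$, $K\gg0$, is unproblematic.

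There is, however, a genuine (if small) gap in your equality (``only if'') direction: you never rule out \emph{finite-dimensional} $\ggl_n$-subquotients of $L=L(c\vareps_n)$, and your blanket claim that ``every $\ggl_n$-subquotient of $L$ is of type $n-1$'' is not justified by the shadow of $L$ as you state it. Injectivity of $e_\alpha$ for $\alpha\in\Delta^{\rm{inj}}L$ passes to submodules but \emph{not} to quotients (the Verma module over $\gs\gl_2$ with dominant highest weight already shows this), so the shadow of $L$ does not directly constrain subquotients. What does pass to subquotients is local finiteness, which gives $\Delta^{\rm{inj}}(\text{subquotient})\subseteq\Delta^{\rm{inj}}L=-\Delta_{\gn_{n-1}}$ and hence, via \Prop{gl-bounded}, forces every \emph{infinite-dimensional} subquotient to be of type $n-1$; but a finite-dimensional subquotient has empty injective set and is compatible with this containment. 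Such a factor contributes nothing at your deep weight $\nu$, so the equality $\deg L=2^n$ alone cannot exclude it, even though its presence would falsify the asserted semisimplification. The paper closes exactly this hole with a separate step: no $\ggl_n$-dominant integral weight $\mu$ satisfies $\mu\leq c\vareps_n$ (if $c\vareps_n-\mu\in Q^+$ then the first coordinate of $\mu$ is $\leq 0$, so dominance forces all coordinates $\leq 0$, contradicting coordinate sum $c>0$), hence $L$ has no finite-dimensional $\ggl_n$-subquotients. Adding this one observation makes your proof complete.
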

\begin{proof}
It is easy to check that $s_{n-1}...s_i \cdot s_{i-1}...s_1 * \lambda = (0,...,0,-1,...,-1,c+i)$ (with $i-1$ many $``-1''$) is either a regular integral weight of type $n-1$ in a family of regularity $1$, or is a singular weight of type $n-1$.  By Proposition \ref{deg-for},
$$
\sum_{i=1}^n \deg \dot{L}(s_{n-1}...s_i \cdot s_{i-1}...s_1 * \lambda ) = \sum_{i=1}^n \binom{n-1}{i-1} = 2^{n-1}
$$
For the inequality in the lemma we use Lemma \ref{lem-pos} and the fact   that every $\gg \gl_n$-subquotient of $L(s_{n-1}...s_1*\lambda)$
comes with multiplicity at least $2$, see~\Lem{lemcliff}. Now, since all $s_{n-1}...s_i \cdot s_{i-1}...s_1 * \lambda $ are of $\gg\gl_n$-type $n-1$, and $s_{n-1}...s_1*\lambda$ is of $\gq(n)$-type $n-1$, by Proposition \ref{gl-bounded} and Corollary \ref{cor-q-bounded} we have that all $L(s_{n-1}...s_1*\lambda)$,  $ \dot{L}(s_{n-1}...s_i \cdot s_{i-1}...s_1 * \lambda )$, $i=1,...,n$, have the same shadow. Hence
$$
\deg L(s_{n-1}...s_1*\lambda) \geq 2 \sum_{i=1}^n \deg \dot{L}(s_{n-1}...s_i \cdot s_{i-1}...s_1 * \lambda ) = 2^n.
$$
 If equality holds, then all $\dot{L}(s_{n-1}...s_i \cdot s_{i-1}...s_1 * \lambda)$ form the complete set of simple $\gg \gl_n$-subquotients (each coming with multiplicity $2$) of $L(s_{n-1}...s_1*\lambda)$ having the same shadow as the one of $L(s_{n-1}...s_1*\lambda)$. On the other hand, all infinite dimensional  $\gg \gl_n$-subquotients  of $L(s_{n-1}...s_1*\lambda)$ have the same shadow, so it remains to show that $L(s_{n-1}...s_1*\lambda)$ has no finite dimensional $\gg \gl_n$-subquotients. This follows easily from the fact that the set of weights $\mu$ for which $\mu \leq s_{n-1}...s_1*\lambda = c \varepsilon_n$ contains no $\gg \gl_n$-dominant integral weights.
 \end{proof}

\subsubsection{} Recall the definition of the module ${\mathcal F}_{c}$ (Example \ref{fam_bounded}).

\begin{lem}{sub_fam}
Let $\lambda = c\vareps_1$, $c \in \Z_{>0}$. Then $L(s_k...s_1 * \lambda)$ is a subquotient of ${\mathcal F}_c$ for every $k=1,...,n-1$.
\end{lem}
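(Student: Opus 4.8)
The plan is to realize $L(s_k\ldots s_1*\lambda)$ directly as a highest weight subquotient coming from an explicit filtration of $\mathcal F_c$. First observe that, since $\lambda=c\vareps_1$ is integral and typical with $c\in\Z_{>0}$, at every step the value $(\,\cdot\,,\ol{\alpha})$ stays nonzero, so the star action agrees with the ordinary reflection and $s_k\ldots s_1*\lambda=s_k\cdots s_1(c\vareps_1)=c\vareps_{k+1}$. Hence it suffices to exhibit $L(c\vareps_{k+1})$ as a subquotient of $\mathcal F_c$. I work with the monomial basis of $\mathcal F_c$ given by the $x^a\xi^\delta$ with $a\in\Z^n$, $\delta\in\{0,1\}^n$ and $\sum_i(a_i+\delta_i)=c$; such a monomial has weight $\sum_i(a_i+\delta_i)\vareps_i$.

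The central structural step is to introduce, for $0\le p\le n$, the subspace
$$\mathcal M^p:=\mathrm{span}\{\,x^a\xi^\delta\in\mathcal F_c : \#\{i : a_i<0\}\le p\,\}$$
and to show that each $\mathcal M^p$ is a $\gq(n)$-submodule, so that $0\subseteq\mathcal M^0\subseteq\cdots\subseteq\mathcal M^n=\mathcal F_c$ is a filtration by submodules with $\mathcal M^0$ the space of honest (non-Laurent) polynomials. To prove this I would verify that none of the generators $e_i,f_i,E_i,F_i$ of $\gq(n)$ increases the number $\#\{i:a_i<0\}$ of negative $x$-exponents: each generator is a sum of two terms of the form (monomial in $x,\xi$)$\cdot\,\partial$, and in every term either the operator raises some exponent, or the differentiated $x$-variable already has a nonnegative exponent so that its exponent only drops from a value $\ge 1$, and the coefficient produced by $\partial_{x_i}$ vanishes precisely when $a_i=0$, which is exactly what forbids the creation of a new negative exponent.

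Next I would pin down the highest weight vector. Consider
$$v:=x_1^{-1}\cdots x_k^{-1}\,x_{k+1}^{c}\,\xi_1\cdots\xi_k\in\mathcal F_c,$$
which has weight $c\vareps_{k+1}$ and exactly $k$ negative $x$-exponents, so $v\in\mathcal M^k\setminus\mathcal M^{k-1}$ and its image $\bar v$ is nonzero in $\mathcal M^k/\mathcal M^{k-1}$. A direct check gives $e_i v,\,E_i v\in\mathcal M^{k-1}$ for all $i$: for $i<k$ the $x$-term of $e_i$ raises the exponent of $x_i$ from $-1$ to $0$, destroying one negative exponent, while its $\xi$-term vanishes since $\xi_i$ already occurs in $v$; the case $i=k$ lands in $\mathcal M^{k-1}$ for the same reason; and for $i>k$ one gets zero outright because the relevant variables do not appear. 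The odd operators $E_i$ are treated identically, now using $\xi_i^2=0$ to annihilate the offending terms. Thus $\bar v$ is a $\gq(n)$-highest weight vector of weight $c\vareps_{k+1}$ in $\mathcal M^k/\mathcal M^{k-1}$ (annihilation by all simple raising operators $e_i,E_i$ forces annihilation by $\gn^+$). Then $U(\gn^-)\bar v$ is a highest weight module of highest weight $c\vareps_{k+1}$ whose unique simple top is $L(c\vareps_{k+1})$, with no parity ambiguity since $c\vareps_{k+1}$ has a single nonzero coordinate and hence $L(c\vareps_{k+1})\cong\Pi(L(c\vareps_{k+1}))$. Therefore $L(s_k\ldots s_1*\lambda)=L(c\vareps_{k+1})$ is a subquotient of $\mathcal M^k/\mathcal M^{k-1}$, and so of $\mathcal F_c$, for every $k=1,\dots,n-1$.

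The main obstacle is purely the bookkeeping in the two verifications — that every $\mathcal M^p$ is a submodule, and that $v$ becomes highest weight in the associated graded — because each of the four generators splits into an $x$-derivative term and a $\xi$-derivative term and the regimes $i<k$, $i=k$, $i>k$ must be separated. However, all of these cases collapse to the two elementary facts that $\partial_{x_i}$ kills $x_i$-degree $0$ and that $\xi_i^2=0$, so no genuine estimate is required; the only point deserving care is confirming that $v\notin\mathcal M^{k-1}$, which is immediate as $v$ has its negative exponents in precisely the $k$ slots $1,\dots,k$.
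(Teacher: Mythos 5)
Your proof is correct, and it is worth comparing carefully with the paper's, because the two diverge exactly where the paper's printed argument is shaky. The paper also works with explicit $\fn^+$-primitive vectors in $\mathcal{F}_c$: for $k=1$ it uses the same vector $x_1^{-1}x_2^c\xi_1$ modulo the polynomial submodule $F_c$ (your $\mathcal{M}^0$), but for $k>1$ it sets $v_k:=x_1^{c-k}\xi_1\cdots\xi_k$, claims $\fn^+(v_k)\subset Uv_{k-1}$, and proceeds by induction on $k$ modulo the inductively built submodule $Uv_1+\cdots+Uv_{k-1}$, asserting that the weight of $v_k$ is $s_k\cdots s_1*\lambda$. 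As printed this last assertion fails: the weight of $x_1^{c-k}\xi_1\cdots\xi_k$ is $(c-k+1)\vareps_1+\vareps_2+\cdots+\vareps_k$, whereas $s_k\cdots s_1*\lambda=c\vareps_{k+1}$ (your computation of the star string is the right one and agrees with the paper's own $\gq(4)$ examples in Section 7). The intended vector is evidently the natural generalization of $v_1$, namely your $v=x_1^{-1}\cdots x_k^{-1}x_{k+1}^c\,\xi_1\cdots\xi_k$, so your proposal in effect repairs a typo in the source. Beyond that, your scaffolding is genuinely different and cleaner: instead of the ad hoc chain $Uv_1+\cdots+Uv_{k-1}$, whose structure the paper leaves to the reader with ``it is not difficult to show,'' you filter $\mathcal{F}_c$ by the canonical submodules $\mathcal{M}^p$ counting negative $x$-exponents; that these are submodules reduces to the single observation that $\partial_{x_i}$ produces the scalar $a_i$, which vanishes precisely when a new negative exponent would be created (I checked all four generator types, including $\xi_i\partial_{x_j}$, and the count indeed never increases). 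This makes every primitivity check a local monomial computation, treats all $k$ simultaneously with no induction, and requires no knowledge of the submodules generated by earlier primitive vectors. Two minor wording slips, neither affecting correctness: $\lambda=c\vareps_1$ is not typical (e.g.\ $(\lambda,\vareps_2+\vareps_3)=0$) --- what you need, and in fact verify, is only that $(\nu,\ol{\alpha_j})\neq 0$ at each step of the string; and $U\bar v$ need not have a \emph{unique} simple top, since its highest weight space is a Clifford module rather than a line --- but any simple subquotient with nonzero $c\vareps_{k+1}$-weight space is $L(c\vareps_{k+1})$ up to $\Pi$, and you correctly dispose of the parity ambiguity by noting that $c\vareps_{k+1}$ has an odd number of nonzero coordinates.
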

\begin{proof}
Note that $F_c:={\mathcal F}_{c}\cap \mathbb{C}[x_1,\ldots,x_n,\xi_1,\ldots,\xi_n]$
is a submodule of $\mathcal{F}_c$. One readily sees that
  $\fn^+ (x_1^{-1}x_2^{c} \xi_1) \subset F_c$  so
   $v_1 := x_2^{c} \frac{\xi_1}{x_1}$ is an $\gn^{+}$-primitive vector in ${\mathcal F}_{c}/F_c$ of weight $(0,c,...,0) = s_1*\lambda$. This implies that $L(s_1* \lambda)$ is subquotient of  ${\mathcal F}_{c}$.  Let $k> 1$ and let  $v_k := x_1^{c-k}\xi_1...\xi_k$. Since  $\fn^+ (v_k) \subset U v_{k-1}$, it is not difficult to show that $v_k$ is an $\gn^{+}$-primitive vector in ${\mathcal F}_{c}/(Uv_1+...+Uv_{k-1})$. Using that the weight of $v_k$ is $s_k...s_1* \lambda$, we complete the proof by induction on $k$.
\end{proof}

\subsubsection{Proof of Theorem  \ref{th-eps1} (i)}\label{proof-pos} Retain notation of \S \ref{notation-ss}.

By Lemma \ref{sub_fam} we have that  $L(s_{n-1}...s_1*\lambda)$ is a subquotient of ${\mathcal F}_{c}$ and hence $\deg L(s_{n-1}...s_1*\lambda) \leq 2^n$.
But, by Lemma \ref{deg-pos}, $\deg L(s_{n-1}...s_1*\lambda) \geq 2^n$.
Therefore, we must have equalities, which, by the same lemma, implies the first identity of Theorem  \ref{th-eps1} (i). To prove the second identity we use Proposition
\ref{prop-link} (ii) recursively for $k=n-2,n-3,...1$.\qed

\subsection{The case of $c =0 $} In this case $\lambda = 0$.

\subsubsection{} \begin{lem}{lem-zero}
 $s_{n-1}...s_i \cdot 0 \in JH (s_{n-1}* 0)$ for every $i = 1,...,n-1$.
\end{lem}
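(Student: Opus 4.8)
The plan is to establish, by induction on $k$, the stronger assertion that $s_k\cdots s_i\cdot 0\in JH(s_k*0)$ for all $i=1,\ldots,k$; the lemma is then the case $k=n-1$. The argument runs parallel to \Lem{lem-pos}, the simplification being that $0$ is $\alpha$-atypical for every root $\alpha$, so that $s_j*0=s_j\cdot 0=-\alpha_j$ for each $j$. In particular $s_j\cdot 0$ is the highest weight of $L(s_j*0)$, whence $\dot{L}(s_j\cdot 0)$ is its top $\gg\gl_n$-subquotient and $s_j\cdot 0\in JH(s_j*0)$. This disposes of the base case $k=1$ and, at every stage of the induction, of the ``diagonal'' case $i=k$.

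For the inductive step I fix $i\le k$, assume $s_k\cdots s_i\cdot 0\in JH(s_k*0)$, and apply \Prop{prop-link}(i) to the $\gq(n)$-arrow $s_k*0\xrightarrow{\,k+1\,}s_{k+1}*0$ furnished by the family (\ref{fam-zero}) in \S\ref{c-fam}. The one nontrivial input is the $\gg\gl_n$-arrow of \S\ref{gl-n-fam} with index $k+1$, namely that $\dot{L}(s_k\cdots s_i\cdot 0)$ is a $\gg\gl_n$-subquotient of $\cD_{\alpha_{k+1}}\dot{L}(s_{k+1}\cdots s_i\cdot 0)$. To produce it I compute both weights explicitly. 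Representing $\rho$ by $(n-1,n-2,\ldots,1,0)$ (the ambiguous constant being killed by $w\cdot 0=w(\rho)-\rho$), the product $s_k\cdots s_i$ acts on coordinates as the cyclic shift carrying position $i$ to position $k+1$, so that
\[
s_k\cdots s_i\cdot 0=(0^{\,i-1},\,(-1)^{\,k-i+1},\,k+1-i,\,0^{\,n-k-1}),
\]
with the value $k+1-i$ sitting in slot $k+1$. The same formula for $s_{k+1}\cdots s_i\cdot 0$ gives, after subtraction,
\[
s_k\cdots s_i\cdot 0-s_{k+1}\cdots s_i\cdot 0=(k+2-i)\alpha_{k+1}\in Q^+ .
\]
Thus the two weights are distinct, each is $\gg\gl_n$-bounded as a member of the $\gg\gl_n$-family of $0$, and $s_k\cdots s_i\cdot 0>s_{k+1}\cdots s_i\cdot 0$. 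Since $s_{k+1}\cdots s_i\cdot 0=s_{k+1}\cdot(s_k\cdots s_i\cdot 0)$, this inequality is precisely $s_{\alpha_{k+1}}\cdot\nu>\nu$ for $\nu:=s_{k+1}\cdots s_i\cdot 0$, so by \S\ref{dotiii}(ii) the module $\dot{L}(s_k\cdots s_i\cdot 0)=\dot{L}(s_{\alpha_{k+1}}\cdot\nu)$ is a subquotient of $\cD_{\alpha_{k+1}}\dot{L}(\nu)$, giving the required arrow. Feeding $\mu=s_k\cdots s_i\cdot 0$ and $\mu'=s_{k+1}\cdots s_i\cdot 0$ into \Prop{prop-link}(i) then yields $s_{k+1}\cdots s_i\cdot 0\in JH(s_{k+1}*0)$ for $1\le i\le k$; combined with the case $i=k+1$ (the diagonal case above), this completes the induction.

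The only place demanding care is the weight bookkeeping: identifying $s_k\cdots s_i\cdot 0$ as the displayed tuple and checking that passing from $k$ to $k+1$ changes the weight by the positive multiple $(k+2-i)\alpha_{k+1}$ of a simple root, which is what licenses the $\gg\gl_n$-arrow via \S\ref{dotiii}(ii). Everything else is a verbatim adaptation of \Lem{lem-pos}, with the regular $\widetilde{W}$-maximal weight $c\vareps_1$ replaced by $0$ and with the factor $s_{i-1}\cdots s_1*$ disappearing because $s_j*0=s_j\cdot 0$.
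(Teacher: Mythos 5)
Your proof is correct and takes essentially the same route as the paper: the identical induction on $k$ with the identical strengthened statement $s_k\cdots s_i\cdot 0\in JH(s_k*0)$, driven by \Prop{prop-link}(i) together with the arrows of the family (\ref{fam-zero}). The only (harmless) deviations are that you settle the base and diagonal cases directly from the atypicality identity $s_j*0=s_j\cdot 0$ rather than through \Prop{prop-link}(i), and that you justify the needed $\gg\gl_n$-arrow by the explicit (and correct) computation $s_k\cdots s_i\cdot 0-s_{k+1}\cdots s_i\cdot 0=(k+2-i)\alpha_{k+1}$ combined with \S\ref{dotiii}(ii), where the paper simply cites the family graphs of \S\ref{gl-n-fam}.
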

\begin{proof}
For convenience we use $\lambda = 0$.  The proof follows the same reasoning as the one of Lemma \ref{lem-pos}.  Namely, we prove by induction a stronger statement:   for every $k=1,...,n-1$, $s_{k}...s_i \cdot \lambda \in JH (s_{k} *\lambda)$ for every $i = 1,...,k$. For $k=1$,  we use that $\lambda \xrightarrow{1} s_1*\lambda$ by \S \ref{c-fam} and that $\xymatrix{ \lambda
\ar@{-->}[r]^{1} & s_1 \cdot \lambda  }$ by \S \ref{gl-n-fam}. Proposition \ref{prop-link} (i) implies that $s_1 \cdot \lambda \in JH (s_1*\lambda)$,
which, together with   $s_1 * \lambda \in JH (s_1*\lambda)$ proves the case $k=1$. For the induction step we use that
$$
\xymatrix{s_k *  \lambda
 \ar@<0.5ex>[r]^{k+1} & s_{k+1} * \lambda  \ar@<0.5ex>[l]^{k} }; \;
\xymatrix{s_k..s_i \cdot  \lambda
 \ar@{-->}@<0.5ex>[r]^{k+1} & s_{k+1}...s_i \cdot  \lambda  \ar@{-->}@<0.5ex>[l]^{k} }
$$
Note that in this proof, in contrast to the proof of Lemma \ref{lem-pos},  we need just Proposition \ref{prop-link} (i),
because of the presence of double arrows in the family (\ref{fam-zero}). \end{proof}

\subsubsection{} \begin{lem}{deg-zero}
 $\sum_{i=1}^{n-1} \deg \dot{L}(s_{n-1}...s_i \cdot  0 ) = 2^{n-2}$. In particular, $\deg L(s_{n-1}*0) \geq 2^{n-1}$ and equality holds if and only if $L(s_{n-1}*0)_{\gg \gl\rm{-ss}}  = \bigoplus_{i=1}^n  \dot{L}(s_{n-1}...s_i \cdot  0 ) $.
\end{lem}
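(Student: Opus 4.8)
The plan is to mirror the proof of~\Lem{deg-pos}, with the weights $s_{n-1}\ldots s_i\cdot s_{i-1}\ldots s_1*\lambda$ appearing there replaced by $s_{n-1}\ldots s_i\cdot 0$. The first step is to record these weights explicitly. Using $w\cdot 0=w\rho-\rho$ one computes
$$s_{n-1}\ldots s_i\cdot 0=(\underbrace{0,\ldots,0}_{i-1},\underbrace{-1,\ldots,-1}_{n-i},\,n-i),$$
so that the coordinates of $s_{n-1}\ldots s_i\cdot 0+\rho$ are a permutation of $\{1,\ldots,n\}$; hence each $s_{n-1}\ldots s_i\cdot 0$ is a regular integral $\ggl_n$-weight of type $n-1$. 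Indeed it is the type-$(n-1)$ member of the $\ggl_n$-family of $0$ of regularity $i$ from~\S\ref{gl-n-fam}, and by~\Lem{lem-zero} it lies in $JH(s_{n-1}*0)$. Its $\ggl_{n-1}$-Levi component is $(0^{\,i-1},(-1)^{\,n-i})$, the highest weight of $\Lambda^{n-i}$ of the dual standard module, of dimension $\binom{n-1}{i-1}$.

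Next I would apply~\Prop{deg-for}. In the regular integral case it expresses $\deg\dot L(s_{n-1}\ldots s_i\cdot 0)$ as an alternating sum of such Levi dimensions, and the resulting partial binomial sum collapses, via the identity $\sum_{j=k}^{N}(-1)^{j-k}\binom{N}{j}=\binom{N-1}{k-1}$, to
$$\deg\dot L(s_{n-1}\ldots s_i\cdot 0)=\binom{n-2}{i-1}.$$
Summing over $i$ then gives the first assertion, $\sum_{i=1}^{n-1}\deg\dot L(s_{n-1}\ldots s_i\cdot 0)=\sum_{i=1}^{n-1}\binom{n-2}{i-1}=2^{n-2}$.

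For the inequality I would reproduce the shadow argument of~\Lem{deg-pos}. By~\Lem{lem-zero} each $\dot L(s_{n-1}\ldots s_i\cdot 0)$ is a $\ggl_n$-subquotient of $L(s_{n-1}*0)$; since $s_{n-1}*0=-\alpha_{n-1}\notin Q^+$, \Lem{lemcliff} gives $\ch L(s_{n-1}*0)_{\ol 0}=\ch L(s_{n-1}*0)_{\ol 1}$, so every $\ggl_n$-subquotient occurs with multiplicity at least $2$. All the $s_{n-1}\ldots s_i\cdot 0$ have $\ggl_n$-type $n-1$ and $s_{n-1}*0$ has $\gq(n)$-type $n-1$, so by~\Prop{gl-bounded} and~\Cor{cor-q-bounded} all of $L(s_{n-1}*0)$ and these $\dot L(s_{n-1}\ldots s_i\cdot 0)$ share a common shadow; their weight supports therefore coincide away from the walls. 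Evaluating $\dim L(s_{n-1}*0)^{\nu}$ at a weight $\nu$ deep in this common support yields $\deg L(s_{n-1}*0)\ge 2\sum_{i=1}^{n-1}\deg\dot L(s_{n-1}\ldots s_i\cdot 0)=2^{n-1}$.

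Finally, for the equality clause, assume $\deg L(s_{n-1}*0)=2^{n-1}$. The count above then forces each $\dot L(s_{n-1}\ldots s_i\cdot 0)$ to occur with multiplicity exactly $2$ and leaves no room for a further infinite-dimensional $\ggl_n$-subquotient (which would share the same shadow and hence increase the generic multiplicity). To exclude finite-dimensional subquotients I note that a finite-dimensional $\dot L(\nu)$ would require a $\ggl_n$-dominant integral $\nu\le s_{n-1}*0=(0,\ldots,0,-1,1)$; but $\nu\le(0,\ldots,0,-1,1)$ forces $\sum_{k=1}^{n-1}\nu_k\le -1$ while $\sum_{k=1}^{n}\nu_k=0$, so $\nu_n\ge 1$, contradicting $\nu_1\ge\cdots\ge\nu_n$. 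Hence $L(s_{n-1}*0)_{\ggl{\rm -ss}}=\bigoplus_{i=1}^{n-1}\dot L(s_{n-1}\ldots s_i\cdot 0)^{\oplus 2}$. I expect the equality clause to be the main obstacle: the delicate point is to turn the common-shadow property into the rigorous statement that no other infinite-dimensional subquotient can occur, together with the dominance estimate ruling out finite-dimensional ones; the degree identity itself is a routine binomial computation from~\Prop{deg-for}.
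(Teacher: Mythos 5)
Your proposal is correct and takes essentially the same route as the paper: the degree count via \Prop{deg-for} (your closed form $\binom{n-2}{i-1}$ is exactly the paper's alternating sum $\binom{n-1}{i}-\binom{n-1}{i+1}+\cdots$ collapsed by the same binomial identity), then \Lem{lem-zero} together with the multiplicity-two and common-shadow argument of \Lem{deg-pos} for the inequality, and for the equality clause the nonexistence of a dominant integral weight below $(0,\ldots,0,-1,1)$, which you verify explicitly where the paper merely asserts it. Note also that your final decomposition $\bigoplus_{i=1}^{n-1}\dot{L}(s_{n-1}\ldots s_i\cdot 0)^{\oplus 2}$ is the intended (corrected) form of the lemma's printed statement, whose upper limit $n$ and missing $\oplus 2$ are typos, as confirmed by \Thm{th-eps1}~(iii).
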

\begin{proof}
Let again $\lambda = 0$. In this case $s_{n-1}...s_i \cdot \lambda = (0,...,0,-1,...,-1, n-i)$ (with $n-i$ many $``-1''$). Using Proposition \ref{deg-for}, we have
$$
\deg \dot{L} (s_{n-1}...s_i \cdot \lambda) =  \binom{n-1}{i} - \binom{n-1}{i+1}+ \binom{n-1}{i+2}-...
$$
Hence
$$
\sum_{i=1}^{n-1} \deg \dot{L}(s_{n-1}...s_i \cdot  \lambda ) =(n-1) + \binom{n-1}{3} + ... = 2^{n-2}.
$$
To prove $\deg L(s_{n-1}*0) \geq 2^{n-1}$ we proceed like in the proof of Lemma \ref{deg-pos}.
Namely,  we use that $L(s_{n-1}*0)$ and all $\dot{L}(s_{n-1}...s_i \cdot  0 )$ have the same shadow. If equality holds, we have that $\dot{L}(s_{n-1}...s_i \cdot  0 )$ are all infinite dimensional simple $\gg \gl_n$-subquotients of $L(s_{n-1}*0)$. But since there are no $\gg \gl_n$ dominant integral weights $\mu$ such that $\mu \leq s_{n-1}*0 = (0,0,...,-1,1)$, $L(s_{n-1}*0)$ has no finite-dimensional $\gg \gl_n$-subquotients. \end{proof}

\subsubsection{}
Let $J:=\sum_{i=1}^n\left( x_i \frac{\partial}{\partial \xi_i} - \xi_i  \frac{\partial}{\partial x_i} \right)$. One has the following.

 (i)  $J$ commutes with all elements of $\gq (n)$.

(ii) $J({\mathcal F}_0)\subset {\mathcal F}_0$ and $(J|_{{\mathcal F}_0})^2 = 0$.

(iii) Let ${\mathcal F}_0^J := \Ker J|_{{\mathcal F}_0}$. Then ${\mathcal F}_0/{\mathcal F}_0^J \simeq {\mathcal F}_0^J$.

\subsubsection{}
\begin{lem}{sub_fam_zero}  $L(s_1 * 0)$ is a subquotient of ${\mathcal F}_0$ and hence of ${\mathcal F}_0^J$. In particular, $\deg L(s_{i}*0) \leq 2^{n-1}$, for $i=1,...,n-1$.
\end{lem}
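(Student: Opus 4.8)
The plan is to realise $L(s_1*0)$ explicitly inside a quotient of ${\mathcal F}_0$, imitating the $\gq(3)$ computation in the Remark following~\Lem{q3-loc}, and then to deduce everything else from the properties (i)--(iii) of $J$ and from the family structure~(\ref{fam-zero}). Note that $s_1*0=s_1\cdot 0=-\alpha_1$. First I would observe that the constants $\C\cdot 1\subset{\mathcal F}_0$ form the trivial submodule $L(0)$, and that $v_0:=x_1^{-1}\xi_1$ is a highest weight vector of weight $0$: in the realisation of~\S\ref{fam_bounded} every simple positive generator $e_i,E_i$ contains a derivative $\partial_{x_j}$ or $\partial_{\xi_j}$ with $j\geq 2$, hence annihilates $v_0$, so $\fn^+v_0=0$. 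Applying $f_1=x_2\partial_{x_1}+\xi_2\partial_{\xi_1}$ produces
$$v_1:=f_1v_0=x_1^{-1}\xi_2-x_1^{-2}x_2\xi_1,$$
a nonzero vector of weight $-\alpha_1=s_1*0$.

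The key step is the primitivity of $v_1$ modulo constants. I would compute directly that $e_1v_1=0$ and $E_1v_1=1$, while $e_iv_1=E_iv_1=0$ for $i\geq 2$ since $v_1$ involves no variable $x_j,\xi_j$ with $j\geq 3$; as $\fn^+$ is generated by the simple generators this gives $\fn^+v_1\subseteq\C\cdot 1$. Thus the image $\bar v_1$ of $v_1$ in ${\mathcal F}_0/\C$ is a nonzero $\fn^+$-primitive vector of weight $s_1*0$, so $U\bar v_1$ is a highest weight module of highest weight $s_1*0$ and $L(s_1*0)$ is a subquotient of it (the parity is immaterial for the degree). Hence $L(s_1*0)$ is a subquotient of ${\mathcal F}_0/\C$, and therefore of ${\mathcal F}_0$.

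Next I would pass to ${\mathcal F}_0^J$. Since $J$ is a $\gq(n)$-endomorphism with $J^2=0$ on ${\mathcal F}_0$ (properties (i),(ii)), ${\mathcal F}_0^J=\Ker J$ is a submodule and ${\mathcal F}_0/{\mathcal F}_0^J\cong\im J$; combined with the given isomorphism ${\mathcal F}_0/{\mathcal F}_0^J\cong{\mathcal F}_0^J$ (property (iii)) this shows that the composition factors of ${\mathcal F}_0$ are precisely those of ${\mathcal F}_0^J$, each with doubled multiplicity. In particular every subquotient of ${\mathcal F}_0$, and so $L(s_1*0)$, is a subquotient of ${\mathcal F}_0^J$. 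Comparing weight spaces in $0\to{\mathcal F}_0^J\to{\mathcal F}_0\to{\mathcal F}_0^J\to 0$ gives $\dim({\mathcal F}_0^J)^\nu=\tfrac12\dim({\mathcal F}_0)^\nu\leq 2^{n-1}$, whence $\deg{\mathcal F}_0^J=2^{n-1}$ and $\deg L(s_1*0)\leq 2^{n-1}$. To extend the bound to all $i$ I would travel along the family~(\ref{fam-zero}): by~\Prop{propqloc} each $L(s_{i+1}*0)$ is a subquotient of a twisted localization of $L(s_i*0)$, and a twisted localization only shifts weight spaces and realises each of its weight spaces as an increasing union of copies of weight spaces of the original module, hence never increases the degree; induction starting from $i=1$ then yields $\deg L(s_i*0)\leq 2^{n-1}$ for all $i$.

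The main obstacle is the explicit verification that $\fn^+v_1\subseteq\C$: this is what upgrades $v_1$ from a mere $\ggl_n$-primitive vector to a genuine $\gq(n)$-highest weight vector after quotienting by the constants, and it is the step where the odd generator $E_1$ (rather than the even $e_1$) does the essential work. Everything afterwards is formal: the passage to ${\mathcal F}_0^J$ and the halving of the degree follow at once from property (iii), and the propagation of the degree bound along the family rests only on the elementary fact that twisted localization does not increase weight multiplicities.
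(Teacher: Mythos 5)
Your proof is correct and follows essentially the same route as the paper's: the same primitive vector $v_1=x_1^{-1}\xi_2-x_1^{-2}x_2\xi_1$ in ${\mathcal F}_0/\C$ (the paper states its primitivity without the computation you supply), the same halving of the degree via properties (i)--(iii) of $J$, and the same propagation of the bound along the family (\ref{fam-zero}). One citation should be repaired, though: \Prop{propqloc} does not give that $L(s_{i+1}*0)$ is a subquotient of a (twisted) localization of $L(s_i*0)$ --- applied to $\lambda=s_i*0$ and $\alpha=\alpha_{i+1}$ it would yield $L(s_{i+1}*(s_i*0))=L(-\alpha_i-\alpha_{i+1})$, which is not $L(s_{i+1}*0)=L(-\alpha_{i+1})$. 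The double arrows in (\ref{fam-zero}) linking $s_i*0$ and $s_{i+1}*0$ instead rest on (the generalization of) \Lem{q3-loc}, as indicated in \S\ref{sec_arrow}; since you invoke the family graph anyway, your observation that twisted localization never increases weight multiplicities then closes the induction exactly as intended. Note also that the paper phrases this last step as all $L(s_i*0)$ having \emph{equal} degree, using both directions of the arrows, whereas your one-directional version already suffices for the inequality $\deg L(s_i*0)\leq 2^{n-1}$.
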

\begin{proof}
To prove that $L(s_1 * 0)$ is a subquotient of ${\mathcal F}_0$, one uses that $\xi_2 x_1^{-1} - x_1^{-2}x_2 \xi_1$ is an $\gn^{+}$-primitive vector in ${\mathcal F}_{0}/{\mathbb C}$. Therefore $\deg L(s_{1}*0) \leq 2^{n-1}$. From (\ref{fam-zero}) we have that all $L(s_{i}*0)$ have the same degree, and hence $\deg L(s_{i}*0) \leq 2^{n-1}$.
\end{proof}

\subsubsection{Proof of Theorem \ref{th-eps1} (iii)}
We repeat the same reasoning as in \S \ref{proof-pos} and apply  the three preceding lemmas in this subsection to prove the statement for $L(s_{n-1} * \lambda)$. To complete the proof for arbitrary $L(s_{k} * \lambda)$ we apply (\ref{fam-zero}), \S \ref{gl-n-fam} (i), and Proposition \ref{prop-link}.
\qed

\subsubsection{Remark}
Another way to state Theorem \ref{th-eps1} (iii) is that all modules of type $k$ in the regular integral $\gg \gl_n$-families of $\lambda = 0$ form the complete set of $\gg \gl_n$-subquotients of the $\gq(n)$-module $L(s_k * \lambda)$.

\subsection{The case of $c \notin \Z$} In this case we proceed like in the case $c \in \Z_{>0}$. The presence of double arrows makes the reasoning easier.  Recall that  $s_{n-1}...s_i \cdot s_{i-1}...s_1 * \lambda:= s_{n-1}...s_1 * \lambda$ for $i=n$ and $s_{k}...s_1* \lambda = \lambda$ for $k=0$.

\subsubsection{} \begin{lem}{lem-nonint}
Let $\lambda = c\vareps_1$, $c \notin \Z$.

(i) $s_{n-1}...s_i \cdot s_{i-1}...s_1 * \lambda \in JH (s_{n-1}...s_1*\lambda)$ for every $i = 1,...,n$.

(ii) $\sum_{i=1}^n \deg \dot{L}(s_{n-1}...s_i \cdot s_{i-1}...s_1 * \lambda ) = 2^{n-1}$. In particular, $\deg L(s_{n-1}...s_1*\lambda) \geq 2^n$ and equality holds if and only if $L(s_{n-1}...s_1*\lambda)_{\gg \gl\rm{-ss}}  = \bigoplus_{i=1}^n \dot{L}(s_{n-1}...s_i \cdot s_{i-1}...s_1 * \lambda)$.

(iii) $L(\lambda)$ is a subquotient of ${\mathcal F}_c$.

\end{lem}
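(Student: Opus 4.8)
The plan is to mirror the three lemmas \Lem{lem-pos}, \Lem{deg-pos} and \Lem{sub_fam} used in the case $c\in\Z_{>0}$, exploiting the extra feature that the nonintegral family~(\ref{fam-nonint}) has arrows in both directions, so that only~\Prop{prop-link}~(i) is needed (no appeal is made to part~(ii)). For~(i) I would run the same induction as in~\Lem{lem-pos}, proving the stronger statement that for every $k=1,\dots,n-1$ one has $s_k\dots s_i\cdot s_{i-1}\dots s_1*\lambda\in JH(s_k\dots s_1*\lambda)$ for all $i=1,\dots,k+1$. The base case $k=1$ combines the $\gq(n)$-arrow $\lambda\xrightarrow{1}s_1*\lambda$ (from~\S\ref{c-fam}) with the corresponding $\gg\gl_n$-link $\dot L(\lambda)\subset(\cD_{\alpha_1}\dot L(s_1\cdot\lambda))_{\gg\gl\text{-ss}}$ (from~\S\ref{gl-n-fam}) and the trivial fact $\lambda\in JH(\lambda)$: then~\Prop{prop-link}~(i) yields $s_1\cdot\lambda\in JH(s_1*\lambda)$, while $s_1*\lambda\in JH(s_1*\lambda)$ tautologically. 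For the inductive step I would push each weight of the hypothesis forward along $s_k\dots s_1*\lambda\xrightarrow{k+1}s_{k+1}\dots s_1*\lambda$ and the matching $\gg\gl_n$-link, again via~\Prop{prop-link}~(i); the term $i=k+1$ is trivial. Taking $k=n-1$ gives~(i), the term $i=n$ being the tautology $s_{n-1}\dots s_1*\lambda\in JH(s_{n-1}\dots s_1*\lambda)$.

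For~(ii) the first step is to identify each weight $\mu_i:=s_{n-1}\dots s_i\cdot s_{i-1}\dots s_1*\lambda$. Since $c\notin\Z$, the star-reflections $s_{i-1}\dots s_1*$ act as ordinary reflections and move $c$ to the $i$-th slot, so $s_{i-1}\dots s_1*\lambda=c\vareps_i$; the subsequent dot-action $s_{n-1}\dots s_i\cdot$ then produces a nonintegral $\gg\gl_n$-bounded weight of type $n-1$ whose $\gg\gl_{n-1}$-part is an exterior-power weight. By the nonintegral case of~\Prop{deg-for} this gives $\deg\dot L(\mu_i)=\dim\dot{L}_{\gg \gl_{n-1}\times \gg \gl_1}(\mu_i)=\binom{n-1}{i-1}$, whence $\sum_{i=1}^n\deg\dot L(\mu_i)=\sum_{i=1}^n\binom{n-1}{i-1}=2^{n-1}$. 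Next, exactly as in~\Lem{deg-pos}: the weight $s_{n-1}\dots s_1*\lambda=c\vareps_n$ is nonintegral, hence lies outside $Q^+$, so by~\Lem{lemcliff} every $\gg\gl_n$-subquotient of $L(s_{n-1}\dots s_1*\lambda)$ occurs with multiplicity at least two; moreover all the $\dot L(\mu_i)$ and $L(s_{n-1}\dots s_1*\lambda)$ share the same shadow (type $n-1$) by~\Prop{gl-bounded} and~\Cor{cor-q-bounded}. Combining these with part~(i) gives $\deg L(s_{n-1}\dots s_1*\lambda)\geq 2\sum_i\deg\dot L(\mu_i)=2^n$. For the equality clause I would observe that equality forces the $\dot L(\mu_i)$, each with multiplicity exactly two, to exhaust the infinite-dimensional $\gg\gl_n$-subquotients, and that there are no finite-dimensional ones because no $\gg\gl_n$-dominant integral weight lies below the nonintegral weight $c\vareps_n$; thus $L(s_{n-1}\dots s_1*\lambda)_{\gg\gl\text{-ss}}=\bigoplus_{i=1}^n\dot L(\mu_i)^{\oplus 2}$.

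For~(iii) I would exhibit an explicit highest weight vector inside ${\mathcal F}_c$. The monomial $x_1^c\in{\mathcal F}_c$ has weight $c\vareps_1=\lambda$ and degree $c$, so it lies in ${\mathcal F}_c$ by the definition in~\S\ref{fam_bounded}. A direct check using the operator formulas there shows that every even and every odd positive root vector annihilates $x_1^c$, since each such operator differentiates in a variable $x_j$ or $\xi_j$ with $j>1$. Hence $U(\gg)x_1^c$ is a highest weight submodule of ${\mathcal F}_c$ of highest weight $\lambda$, and its unique simple quotient is $L(\lambda)$ (up to parity); therefore $L(\lambda)$ is a subquotient of ${\mathcal F}_c$.

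I expect the genuine difficulty to sit entirely in part~(ii): the bookkeeping of the equality clause — confirming that multiplicities are exactly two and that no finite-dimensional $\gg\gl_n$-subquotients intrude — and the clean identification of the $\gg\gl_n$-type and the $\gg\gl_{n-1}$-dimension $\binom{n-1}{i-1}$ of each $\mu_i$, where the nonintegrality of $c$ must be invoked carefully. By contrast, the presence of arrows in both directions in~(\ref{fam-nonint}) makes the propagation in~(i) purely routine, and~(iii) is an explicit computation.
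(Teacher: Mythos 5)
Your proposal is correct and is precisely the route the paper intends: the paper offers no separate proof of \Lem{lem-nonint} beyond the remark that one proceeds as for $c\in\Z_{>0}$ with the double arrows easing matters, and your execution — propagating via \Prop{prop-link}(i) along the bidirectional arrows of the family (\ref{fam-nonint}), computing $\mu_i=s_{n-1}\ldots s_i\cdot s_{i-1}\ldots s_1*\lambda$ and summing $\sum_{i=1}^n\binom{n-1}{i-1}=2^{n-1}$ via the nonintegral clause of \Prop{deg-for}, invoking \Lem{lemcliff} and the shadow comparison through \Prop{gl-bounded} and Corollary \ref{cor-q-bounded}, and exhibiting the $\fn^+$-primitive vector $x_1^c\in{\mathcal F}_c$ — faithfully mirrors \Lem{lem-pos}, \Lem{deg-pos} and \Lem{sub_fam} (your (iii) is in fact simpler than \Lem{sub_fam}, since for $c\notin\Z$ no passage to a quotient of ${\mathcal F}_c$ is needed). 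Note only that your multiplicity-two form of the equality clause in (ii) is the right one, consistent with \Lem{deg-pos} and \Thm{th-eps1}(i),(iv); the lemma as printed drops the exponent $\oplus 2$, evidently a typo, so this is a correction rather than a discrepancy.
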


\subsubsection{Proof of Theorem \ref{th-eps1} (iv)} We apply Lemma \ref{lem-nonint} and reason as in \S \ref{proof-pos}. From (\ref{fam-pos}) we observe that all modules in the nonintegral family of $\lambda$ have the same degree. Using this and Lemma \ref{lem-nonint} (iii) we find $\deg L(s_{n-1}...s_1*\lambda) \leq 2^n$. Now with the aid of Lemma \ref{lem-nonint} (ii)  and the fact that  the simple $\gg \gl_n$-subquotients of $L(s_k...s_1* \lambda)$ have distinct central characters we complete the proof.
\qed

\section{ Classification of simple cuspidal $\gq(n)$--modules} \label{sec-cusp}

In this section we reduce the classification of simple weight ${\mathfrak q}(n)$-modules to the classification of simple highest weight bounded ${\mathfrak q}(n)$-modules of type $1$. The former classification  is first reduced the to the classification of simple cuspidal ${\mathfrak q}(n)$-modules with a Fernando-Futorny parabolic reduction theorem. Then copying methods of \cite{M} we present every cuspidal module as a twisted localization of  a highest weight bounded module. 

\subsection{} One of the main theorems  in \cite{DMP} states that every simple weight module of a simple finite dimensional Lie superalgebra $\gg$ can be presented in a unique way (up to a Weyl group conjugacy) as a parabolically induced module from a cuspidal module. We refer the reader to Theorem 6.1 in \cite{DMP} for details. This theorem is a generalization of the so called Fernando-Futorny parabolic induction theorem in the case when $\gg$ is a Lie algebra. In the case $\gg = \gp \gs \gq (n)$ (or, equivalently for $\gq (n)$) using the description of the so called ``cuspidal Levi subsuperalgebras'' one has the following $\gq (n)$-version of the result of Dimitrov-Mathieu-Penkov (for sake of simplicity the uniqueness part is omiited).

\begin{thm}{} Every simple weight $\gq(n)$-module is parabolically induced from a cuspidal module over $\gq (n_1) \oplus....\oplus \gq(n_k)$, for some positive integers $n_1,...,n_k$ with $n_1+...+n_k = n$.
\end{thm}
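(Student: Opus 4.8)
The plan is to deduce the statement from the general parabolic induction theorem of Dimitrov--Mathieu--Penkov (Theorem~6.1 in \cite{DMP}) by specializing it to $\gg=\gp\gs\gq(n)$ (equivalently to $\gq(n)$). That theorem asserts that every simple weight $\gg$-module $M$ is isomorphic to the unique simple quotient of the module parabolically induced from a simple \emph{cuspidal} module $S$ over the Levi subsuperalgebra $\gs$ of some parabolic subsuperalgebra $\gp=\gs\oplus\gn$, the pair $(\gp,S)$ being unique up to conjugacy; here ``cuspidal over $\gs$'' means that every root vector of $\gs$ acts injectively on $S$. Thus the whole content of our statement is the identification of the Levi subsuperalgebras $\gs$ that can support a cuspidal module — the so-called cuspidal Levi subsuperalgebras of $\gq(n)$ — as precisely the $\gq(n_1)\oplus\cdots\oplus\gq(n_k)$.

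First I would describe the Levi subsuperalgebras of $\gq(n)$. Since the root system $\Delta=\Delta(\gq(n),\gh_{\ol 0})$ coincides with that of $\gg\gl_n$ and is of type $A_{n-1}$, and since each root space $\gq(n)^{\alpha}$ carries both an even generator $e_{\alpha}$ and an odd generator $E_{\alpha}$, every parabolic subsuperalgebra containing the fixed Borel $\gh\oplus\gn^+$ is, up to conjugacy by the Weyl group $W$, a standard parabolic $\gp_I$ indexed by a subset $I\subseteq\Pi$ of simple roots. Writing $I$ as a disjoint union of maximal strings of consecutive simple roots partitions $\{1,\dots,n\}$ into consecutive blocks of sizes $n_1,\dots,n_k$; the corresponding Levi $\gs_I$ retains, on each block, the full queer structure (the odd Cartan $\gh_{\ol 1}$ together with both the even and the odd root vectors for the roots of that block), so that $\gs_I\cong\gq(n_1)\oplus\cdots\oplus\gq(n_k)$. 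Hence \emph{every} Levi subsuperalgebra of $\gq(n)$ is already of the asserted form, and it remains only to see that each such $\gs_I$ is genuinely a cuspidal Levi.

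The final step is to match the abstract cuspidality condition of \cite{DMP} with this block decomposition. A simple cuspidal $\gs_I$-module is cuspidal over each summand $\gq(n_i)$ by restriction, so it suffices that every factor $\gq(m)$ admits cuspidal modules: the factors $\gq(1)$ have empty root system and are cuspidal for trivial reasons, while for $m\geq 2$ the existence of cuspidal modules follows from the localization machinery of \S\ref{loc-functor} and \S\ref{gencon}, namely by applying the twisted localization functor $\cD_{\alpha}^{x}$ along the simple roots to a bounded simple highest weight module of type $1$ so as to make every root vector act bijectively (cf.~\Prop{propqloc}). Conversely, the super analog of Fernando's theorem established in \cite{DMP} rules out torsion-free modules over any Levi outside this list, so the cuspidal Levi subsuperalgebras of $\gq(n)$ are exactly the $\gq(n_1)\oplus\cdots\oplus\gq(n_k)$, and the theorem follows. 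The main obstacle I anticipate is precisely this matching: extracting from \cite{DMP} the exact shape of the cuspidal Levi subsuperalgebras in the strange series and confirming the existence of cuspidal modules over each queer block — the latter being exactly what the bounded-module and localization theory developed earlier in the paper supplies.
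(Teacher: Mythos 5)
Your argument is correct in outline, but it is a genuinely different route from the proof the paper actually writes out. You specialize the Dimitrov--Mathieu--Penkov parabolic induction theorem (Theorem~6.1 of \cite{DMP}) and then identify the Levi subsuperalgebras of $\gq(n)$ as the $\gq(n_1)\oplus\cdots\oplus\gq(n_k)$ --- this is exactly the derivation the paper \emph{mentions} before the theorem, but what it \emph{proves} is a short, more elementary argument (suggested by the referee) that bypasses \cite{DMP} entirely: since $U(\gq(n))$ is a finite module over $U(\ggl_n)$, a simple weight $\gq(n)$-module $L$ is finitely generated over $\ggl_n$ and so has a simple top $\dot{L}$; if $L$ is not cuspidal then neither is $\dot{L}$, so the classical Fernando--Futorny theorem \cite{Fe}, \cite{Fu} says $\dot{L}$ is parabolically induced from a parabolic $\gp\subset\ggl_n$; by the induction/restriction adjunction $L$ embeds in $\Ind^{\gq(n)}_{\ggl_n}\dot{L}$, which is $\dot L$ tensored by a finite-dimensional space, whence $L$ is parabolically induced from the $\gq(n)$-version of $\gp$. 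The trade-off: your route buys generality and uniqueness (it is the statement for all classical simple superalgebras at once), but it hides the real work inside \cite{DMP} --- in particular the passage from the simple algebra $\gp\gs\gq(n)$ to $\gq(n)$ itself, which you and the paper both only gloss --- while the paper's written proof is self-contained modulo the purely even Fernando--Futorny theorem. Two smaller remarks on your write-up: the final verification that each $\gq(n_i)$ actually admits cuspidal modules is not logically needed for the stated implication (DMP already hands you \emph{some} Levi with a cuspidal module over it, and you have shown every Levi has the asserted block shape), and your closing ``converse'' sentence about ruling out torsion-free modules over Levis ``outside this list'' is vacuous for the same reason --- there are no such Levis in $\gq(n)$.
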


We present a short proof of the above theorem based on the Fernando-Futorny parabolic induction theorem which was kindly suggested by the referee. 

Let $L$ be a simple non-cuspidal ${\mathfrak q}(n)$-module. As $U({\mathfrak q}(n))$ is finite over $U(\mathfrak{gl}_n)$, the module $L$ is finitely generated as a $\mathfrak{gl}_n$-module and hence has a simple top, say $\dot{L}$. The module $\dot{L}$ is  not cuspidal, hence, it is parabolically induced from some parabolic subalgebra ${\mathfrak p}$ of $\gg \gl_n$ by the theorem of Fernando-Futorny. By adjunction between ${\rm Ind}^{{\mathfrak q}(n)}_{\mathfrak{gl}(n)}$ and ${\rm Res}_{{\mathfrak q}(n)}^{\mathfrak{gl}(n)}$, the module $L$ is a submodule of $\mbox{Ind}^{{\mathfrak q}(n)}_{\mathfrak{gl}(n)}\dot{L}$. The latter is just tensoring $L$ with a  finite dimension module. Hence $L$ is parabolically induced from the ``${\mathfrak q}(n)$-version'' of ${\mathfrak p}$.

\subsubsection{Remark}
The original theorem in \cite{DMP} is for the simple Lie superalgebra $\gp \gs \gq (n)$,
which is the simple subquotient of $\gq(n)$,
but the proof there can be easily modified for $\gq (n)$ as well.

\subsection{} The following result is a standard property of the localization functor, but for the reader's convenience a short proof is included.

\begin{lem}{}\label{lemma-loc} Let $\gg = \gq (n)$ or $\gg = \gg \gl_n$ and $\alpha \in \Delta$. If $M$ is a $\gg$--module and  $L$ is a simple submodule of
 ${\cD}_{\alpha} M$, then $L \subset M$.
\end{lem}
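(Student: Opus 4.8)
The plan is to argue directly from the Ore description of the localization, using only that $L$ is stable under the single element $f_\alpha\in U$. Write $\bar M$ for the image of the canonical $U$-linear map $M\to\cD_\alpha M$, $m\mapsto 1\otimes m$; this is a $\gg$-submodule of $\cD_\alpha M$, and the conclusion $L\subset M$ is to be read as $L\subseteq\bar M$ (when $f_\alpha$ acts injectively on $M$ this map is injective and $\bar M$ is literally $M$). First I would record two structural facts recalled in \S\ref{loc-functor}. Since ${\bf F}_\alpha=\{f_\alpha^n\}$ is an Ore set and every element of $\cD_\alpha U$ is a left fraction $f_\alpha^{-n}a$ with $a\in U$, a common-denominator computation shows that every element of $\cD_\alpha M=\cD_\alpha U\otimes_U M$ has the form $f_\alpha^{-n}\cdot w$ for some $n\geq 0$ and some $w\in\bar M$ (move each $a$ across the tensor over $U$ into $M$ and take a common power of $f_\alpha$). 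Second, because $f_\alpha$ is invertible in $\cD_\alpha U$, it acts bijectively on $\cD_\alpha M$; in particular $f_\alpha^{n}x=0$ forces $x=0$.

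With these in hand, the core of the proof is a one-line dichotomy. The intersection $L\cap\bar M$ is a $\gg$-submodule of the simple module $L$, so it is either $0$ or $L$, and it suffices to show it is nonzero. I would pick $0\neq\ell\in L$ and write $\ell=f_\alpha^{-n}\cdot w$ with $w\in\bar M$. Applying $f_\alpha^{n}\in U$ gives $w=f_\alpha^{n}\cdot\ell$. On one hand $w\in\bar M$ by construction; on the other hand $w=f_\alpha^{n}\ell\in L$, since $L$, being a $\gg$-submodule, is closed under the action of $f_\alpha\in U$. Moreover $w\neq 0$, because $\ell\neq 0$ and $f_\alpha$ acts injectively on $\cD_\alpha M$. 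Hence $0\neq w\in L\cap\bar M$, so $L\cap\bar M=L$, that is $L\subseteq\bar M$, as desired.

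The one point that needs care — and the step I would flag as the potential pitfall — is the distinction between $L$ being a $\gg$-submodule and being a $\cD_\alpha U$-submodule. The argument uses crucially that $L$ is closed under multiplication by $f_\alpha$ (an element of $U\subseteq\cD_\alpha U$), but it does not, and must not, assume closure under $f_\alpha^{-1}$: it is exactly the asymmetry $f_\alpha\cdot L\subseteq L$ while $f_\alpha^{-1}$ may push us out of $L$ that lets a nonzero $\ell$ be cleared of its denominator back into $\bar M$. The two auxiliary verifications — the common-denominator normalization and the bijectivity of $f_\alpha$ on $\cD_\alpha M$ — are the elementary properties of Ore localization, and use no input distinguishing $\gq(n)$ from $\ggl_n$, which is why the single argument covers both cases stated in the lemma.
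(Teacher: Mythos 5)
Your proof is correct and is essentially the paper's own argument: the paper likewise picks $x\in L$, clears the denominator via $f_\alpha^N x\in M\cap L$, and concludes $L\cap M=L$ by simplicity. Your version merely adds two precisions the paper leaves implicit — working with the image $\bar M$ of $M\to\cD_\alpha M$ (needed when $f_\alpha$ is not injective on $M$) and noting that $f_\alpha^n\ell\neq 0$ because $f_\alpha$ is invertible on $\cD_\alpha M$ — both of which are sound.
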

\begin{proof}
Let $x \in L$. There is $N\geq 0$ such that $f_{\alpha}^N x \in M$. Therefore $L \cap M $ contains $f_{\alpha}^N x$ and hence is nonzero. From the simplicity of $L$ we obtain $L \cap M = L$.
\end{proof}

\subsection{}
The following theorem is valid for $\gg \gl_n$--modules, see \cite{M}.

 \begin{thm}{cuspidal}
 Let $M$ be a simple cuspidal $\gq(n)$--module. Then there is a unique bounded weight $\lambda$ of type $1$,
which is  either regular integral, or singular, or nonintegral, and a unique tuple
$(x_1,...,x_{n-1})$ of $n-1$ complex nonintegral numbers such that
 $M \simeq \cD_{\alpha_1}^{x_1}  \cD_{\alpha_1+\alpha_2}^{x_2}...
\cD_{\alpha_1+...+\alpha_{n-1}}^{x_{n-1}} L(\lambda)$.
 \end{thm}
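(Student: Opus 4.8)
The plan is to adapt Mathieu's localization argument for $\ggl_n$ (see~\cite{M}) to $\gq(n)$, using the twisted localization machinery of~\S\ref{loc-functor} together with the $\gq(n)$-analogues already in place. Write $\beta_k:=\alpha_1+\cdots+\alpha_k=\vareps_1-\vareps_{k+1}$, so that $\{\beta_1,\ldots,\beta_{n-1}\}$ is exactly the set of roots of the nilradical $\gn_1$ of the cominuscule parabolic $\gp_1$, and is simultaneously a $\Z$-basis of $Q$. Since $\gn_1$ is abelian, the even root vectors $f_{\beta_k}$ pairwise commute, so by~\S\ref{loc-functor} the functors $\cD_{\beta_k}$, and hence the twisted localizations $\cD^{x}_{\beta_k}$, commute with one another. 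A simple cuspidal $M$ is cyclic, hence finitely generated over $\ggl_n$ (as $U(\gq(n))$ is finite over $U(\ggl_n)$), and a finitely generated cuspidal module is bounded; in particular every $f_{\beta_k}$ acts bijectively on $M$, so $\cD_{\beta_k}M=M$ and each $\Phi^{x}_{\beta_k}$ is an invertible self-equivalence on $M$. Thus the whole problem reduces to \emph{untwisting} $M$ into a highest weight module.

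For existence I would fix a weight $\nu\in\supp M$, note that $\supp M$ is the single coset $\nu+Q$, and choose non-integral parameters $x_1,\ldots,x_{n-1}$ so that the module $N:=\Phi^{-x_1}_{\beta_1}\cdots\Phi^{-x_{n-1}}_{\beta_{n-1}}M$ has its support bounded above in the cominuscule directions, i.e.\ so that each $e_{\beta_k}$ becomes locally finite while the Levi $\gs_1$-action stays locally finite. Concretely the class of $\nu$ modulo $Q$, read against the fixed central character, pins down the correct shift $\sum_k x_k\beta_k=(\sum_k x_k,-x_1,\ldots,-x_{n-1})$; this is the only step that genuinely uses the boundedness and cuspidality of $M$. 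Once the shift is chosen, $N$ acquires a nonzero $\gn^+$-highest weight vector; being the image of the simple $M$ under an invertible functor (and using \Lem{lemma-loc}), $N$ is simple, so $N\cong L(\lambda)$ for a uniquely determined $\lambda$. That $\lambda$ is of type $1$ follows from \Cor{cor-q-bounded} and \Prop{gl-bounded}: the $n-1$ localizations make precisely the roots $-\Delta_{\gn_1}$ bijective, forcing $\Delta^{\rm{inj}}L(\lambda)=-\Delta_{\gn_1}$ and $\Delta^{\rm{fin}}L(\lambda)=\Delta_{\gs_1}\sqcup\Delta_{\gn_1}$. Whether $\lambda$ is regular integral, singular or nonintegral is dictated by the integrality class of $\nu$, giving the three listed cases; inverting yields $M\cong\cD^{x_1}_{\beta_1}\cdots\cD^{x_{n-1}}_{\beta_{n-1}}L(\lambda)$, which is the asserted form since $\cD^{x_k}_{\beta_k}$ and $\Phi^{x_k}_{\beta_k}$ agree on the cuspidal $M$.

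For uniqueness I would argue that $\lambda$ is forced: twisted localization along the $\beta_k$ cannot alter the shadow type, so $\lambda$ is the unique type-$1$ weight lying in the family (in the sense of~\S\ref{descr_bounded}) determined by the central character and integrality class of $M$. The tuple $(x_1,\ldots,x_{n-1})$ is then recovered by matching $\supp M$ with $\lambda+\sum_k x_k\beta_k+Q$; as $\{\beta_k\}$ is a $\Z$-basis of $Q$ this determines each $x_k$ modulo $\Z$, and by the lemma of~\S\ref{lmnew} the functor $\cD^{x_k}_{\beta_k}$ depends only on $x_k\bmod\Z$, so the non-integral parameters are unique in the only sense available. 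The step I expect to be the main obstacle is the verification in the previous paragraph that suitable non-integral $x_k$ exist making $N$ a genuine highest weight module of type $1$ — that is, that the shift simultaneously terminates all $n-1$ cominuscule $\beta_k$-strings from above while keeping the Levi $\gs_1$-action locally finite. This is exactly the point where the boundedness of $M$, the cuspidality, and the single-step transitions recorded in \Prop{propqloc} and~\S\ref{q2} must be combined, and where the $\gq(n)$-specific Clifford structure on the weight spaces has to be tracked through the commuting-localization bookkeeping, as in the $\ggl_n$ argument of~\cite{M}.
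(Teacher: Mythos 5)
Your proposal contains a genuine gap and one outright error, both at the heart of the argument. The error first: you claim that $N=\Phi^{-x_1}_{\beta_1}\cdots\Phi^{-x_{n-1}}_{\beta_{n-1}}M$ "is simple, so $N\cong L(\lambda)$" (and earlier that its support is bounded above in the cominuscule directions). This is impossible. Since $M$ is cuspidal, every $f_{\beta_k}$ acts bijectively on $M$, and the twists $\Phi^{x}_{\beta_k}$ preserve this bijectivity; hence $N$ has support unbounded in the $-\beta_k$ directions and can never be a highest weight module. What is true is that $N$ is simple over the localized algebra $\cD_\Sigma U$; as a $U(\gq(n))$-module it is $\cD_\Sigma L(\lambda)$, which \emph{properly contains} $L(\lambda)$ (if $f_{\beta_k}$ were bijective on $L(\lambda)$ the module would itself be cuspidal, contradicting that it is highest weight). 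The paper repairs exactly this point: inside $\Phi_\Sigma^{-\mu}M$ it takes the subspace of vectors on which the $e_\alpha$, $\alpha\in\Delta^{\rm{fin}}\dot{L}(\lambda_0)$, act nilpotently --- a $\gq(n)$-submodule because $\ad(e_\alpha)$ is nilpotent on $U(\gq(n))$ --- observes that this is a nonzero bounded module, hence of finite length, picks a simple submodule $L$, and concludes $\cD^{\mu}_\Sigma L=M$ from the simplicity of $M$; the shadow computation ($\Delta^{\rm{inj}}L=\Sigma$, $\Delta^{\rm{fin}}L=\Delta\setminus\Sigma$) then forces $L=L(\lambda)$ with $\lambda$ of type $1$.

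The gap is the step you yourself flag as "the main obstacle": producing nonintegral $x_k$ for which the untwist acquires a singular vector. That is not a deferrable verification --- it is the entire content of the theorem --- and your proposed mechanism (reading the twist off the class of $\nu$ modulo $Q$ against the central character) cannot work as stated: for $\gq(n)$ the atypical central characters are far from separating, and the coset $\supp M$ only determines the $x_k$ modulo $\Z$, which says nothing about the existence of a highest weight vector. The paper sidesteps any rerun of Mathieu's analysis by \emph{quoting} his $\ggl_n$ theorem for a simple $\ggl_n$-submodule $M_0\subset M$ (which exists since $U(\gq(n))$ is finite over $U(\ggl_n)$): Mathieu gives $M_0=\cD_\Sigma^{\mu}\dot{L}(\lambda_0)$ with a concrete twist $\mu$ and a type-$1$ weight $\lambda_0$, so $\Phi_\Sigma^{-\mu}M\supset\cD_\Sigma\dot{L}(\lambda_0)$ comes equipped with the needed locally finite vector for free. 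Uniqueness is handled the same way, not via central characters: from $M=\cD^{\mu_1}_\Sigma L(\lambda_1)=\cD^{\mu_2}_\Sigma L(\lambda_2)$ one takes a simple $\ggl_n$-submodule $S_0\subset M$, picks simple $\ggl_n$-submodules $L_i\subset\Phi^{-\mu_i}_\Sigma S_0\subset\cD_\Sigma L(\lambda_i)$, pulls them into $L(\lambda_i)$ by \Lem{lemma-loc}, deduces $\mu_1=\mu_2$ from the $\ggl_n$ uniqueness, and applies \Lem{lemma-loc} once more to $\cD_\Sigma L(\lambda_1)=\cD_\Sigma L(\lambda_2)$ to get $L(\lambda_1)=L(\lambda_2)$. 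You should replace your "shadow type plus central character" uniqueness argument by this reduction.
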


\subsection{Proof of existence of Theorem \ref{cuspidal}} Retain notation of  \S \ref{gencon}. We  introduce some notation that will be used both for the existence and the uniqueness parts of the theorem. Set  $\Sigma = \{ \alpha_1, \alpha_1 + \alpha_2,...,\alpha_1 +...+\alpha_{n-1}\}$, and denote by $\cD_{\Sigma}$ the functor $\cD_{\alpha_1}...\cD_{\alpha_1+...\alpha_{n-1}}$ on the category of all weight $\gq(n)$-modules. Also, for a weight $\mu = x_1 \alpha_1 +...+ x_{n-1} \alpha_{n-1}$, let
$\Phi_{\Sigma}^{\mu}:=\Phi_{\alpha_1}^{x_1}...\Phi_{\alpha_1+...+\alpha_{n-1}}^{x_{n-1}}$. Notice that since $[f_{\alpha}, f_{\beta}] = 0$ for every $\alpha, \beta \in \Sigma$, we can switch the order of the factors in the definitions of $\cD_{\Sigma}$ and $\Phi_{\Sigma}^{\mu}$.  Finally,  set $\cD_{\Sigma}^{\mu} = \Phi_{\Sigma}^{\mu} \cD_{\Sigma} =  \cD_{\alpha_1}^{x_1}  \cD_{\alpha_1+\alpha_2}^{x_2}...
\cD_{\alpha_1+...+\alpha_{n-1}}^{x_{n-1}} $.

Let $M_0$ be a simple $\gg \gl_n$-submodule of $M$.  Since $M_0$ is cuspidal, and the theorem holds for $\gg \gl_n$, there is a weight $\mu = x_1 \alpha_1 +...+ x_{n-1} \alpha_{n-1}$ and a $\gg \gl_n$--type 1 weight $\lambda_0$ such that $M_0 = \cD_{\Sigma}^{\mu} \dot{L} (\lambda_0)$. Therefore, the $\gg \gl_n$-module $\Phi_{\Sigma}^{-\mu} M$ contains $ \cD_{\Sigma}  \dot{L} (\lambda_0)$ as a submodule, and, in particular, has a vector $v$ such that $e_{\alpha} v = 0$ for every $\alpha \in \Delta^{\rm{fin}} \dot{L}(\lambda_0)$. Since $\ad(e_{\alpha})$ is nilpotent in $U(\mathfrak{q}(n))$ for every $\alpha \in \Delta$,   
$$
N = \{ x \in \Phi_{\Sigma}^{-\mu} M \; | \; e_{\alpha}^Nx = 0, \mbox{ for every } \alpha \in \Delta^{\rm{fin}} \dot{L}(\lambda_0), \mbox { and } N>>0\}
$$
is a $\mathfrak{q}(n)$-module. Moreover, $v \in N$ implies that $N$ is a  nontrivial bounded module. Since every finitely generated bounded module has finite length (see \S \ref{gl-roots}), we may fix a simple submodule $L$ of $N$. From $\cD_{\Sigma}^{\mu}L \subset M$ and the simplicity of $M$ we find that $\cD_{\Sigma}^{\mu}L = M$. On the other hand,  $ \Delta^{\rm{fin}} \dot{L}(\lambda_0) \subset  \Delta^{\rm{fin}} L $,  $\Sigma \subset  \Delta^{\rm{inj}} L$, and $ \Delta^{\rm{fin}}  \dot{L}(\lambda_0)  \sqcup \Sigma = \Delta$, imply $ \Delta^{\rm{inj}} L = \Sigma$ and  $ \Delta^{\rm{fin}} L = \Delta \setminus \Sigma$. Hence, $L = L (\lambda)$ for some type 1 weight $\lambda$.

\subsection{Proof of uniqueness of Theorem \ref{cuspidal}} With the notation of the previous subsection, assume that $M = \cD_{\Sigma}^{\mu_1} L(\lambda_1) =  \cD_{\Sigma}^{\mu_2} L(\lambda_2) $ for some weights $\mu_i$ and type-1 weights $\lambda_i$. Let again $S_0$ be a simple (cuspidal) $\gg \gl_n$-submodule of $S$.  Let for $i=1,2$, $L_i$ be a simple $\gg \gl_n$-submodule of $\Phi_{\Sigma}^{-\mu_i} S_0$. Since $L_i \subset \Phi_{\Sigma}^{-\mu_i} S_0 \subset \cD_{\Sigma} L(\lambda_i) $, by Lemma  \ref{lemma-loc}, $L_i \subset L(\lambda_i)$. Therefore $L_i = \dot{L} (\nu_i)$ some weights $\nu_i$. But then $\cD_{\Sigma}^{\mu_i} \dot{L}(\nu_i) \subset S_0$ and from the simplicity of $S_0$ we obtain $\cD_{\Sigma}^{\mu_1} \dot{L}(\nu_1) =  \cD_{\Sigma}^{\mu_2} \dot{L}(\nu_2) = S_0$. The uniqueness of the $\gg \gl_n$-version of the theorem implies $\mu_1 = \mu_2$. Now using that $\cD_{\Sigma} L(\lambda_1) = \cD_{\Sigma} L(\lambda_2)$ contains $L(\lambda_1)$ and $ L(\lambda_2)$ as submodules, and applying Lemma \ref{lemma-loc}, we verify that $L(\lambda_1) = L (\lambda_2)$ which completes the proof.

\section{Appendix}
Retain notation of Section~\ref{i-arrow}. Recall that $a\succ b$ if and only if $a>b$ or $a=b=0$.
Let $\lambda$ be a weight for $\gq (n)$.
\subsection{}
The following lemma shows that for a maximal regular weight $\lambda$
the weights $(s_i\ldots s_1)*\lambda$ form an increasing string.

\begin{lem}{lems1s2}
Assume that $s_j*\lambda<\lambda \text{ for } j=i,i+1,\ldots,k$. Then
$$
s_is_{i+1}\cdots s_k*\lambda<s_{i+1}\cdots s_k*\lambda<\lambda.
$$
\end{lem}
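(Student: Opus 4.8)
The plan is to reduce everything to explicit bookkeeping of coordinates, reading off the partial order each time from the characterization~(\ref{eqsucc}). Writing $\lambda=(a_1,\dots,a_n)$, the hypothesis $s_j*\lambda<\lambda$ for $j=i,\dots,k$ means, by the first line of~(\ref{eqsucc}), exactly that $a_i\succ a_{i+1}\succ\cdots\succ a_{k+1}$. I would then set $\nu_{k+1}:=\lambda$ and $\nu_m:=s_ms_{m+1}\cdots s_k*\lambda=s_m*\nu_{m+1}$ for $i\le m\le k$, and prove the full increasing string $\nu_i<\nu_{i+1}<\cdots<\nu_{k+1}=\lambda$; the lemma is then the bottom link $\nu_i<\nu_{i+1}$ together with $\nu_{i+1}<\lambda$, the latter being the remainder of the chain by transitivity of the partial order.

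The key point is that the subword $s_{m+1}\cdots s_k$ touches only coordinates in positions $\ge m+1$, since both the plain and the dotted reflection of a simple root $\alpha_p$ alter only positions $p,p+1$. Hence $(\nu_{m+1},\vareps_j)=a_j$ for all $j\le m$; in particular $(\nu_{m+1},\vareps_m)=a_m$. I would introduce the ``bubbling'' value $w_m:=(\nu_m,\vareps_m)$. Computing $\nu_m=s_m*\nu_{m+1}$ from positions $m,m+1$ of $\nu_{m+1}$, which are $a_m$ and $w_{m+1}$, shows $w_m=w_{m+1}$ when the plain reflection applies ($a_m+w_{m+1}\ne0$) and $w_m=w_{m+1}-1$ when the dotted one applies ($a_m+w_{m+1}=0$); likewise $w_k\in\{a_{k+1},a_{k+1}-1\}$ comes from $s_k*\lambda$. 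By induction this gives $a_{k+1}-w_m\in\mathbb{Z}_{\ge0}$ for every $m\le k$.

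To finish, for each $m$ I would invoke~(\ref{eqsucc}) once more: $\nu_m=s_m*\nu_{m+1}<\nu_{m+1}$ is equivalent to $(\nu_{m+1},\vareps_m)\succ(\nu_{m+1},\vareps_{m+1})$, i.e. to $a_m\succ w_{m+1}$. Since $\succ$ is transitive, the hypothesis gives $a_m\succ a_{k+1}$, and combining with $a_{k+1}-w_{m+1}\in\mathbb{Z}_{\ge0}$ yields $a_m\succ w_{m+1}$: if $a_m-a_{k+1}\in\mathbb{Z}_{>0}$ then $a_m-w_{m+1}=(a_m-a_{k+1})+(a_{k+1}-w_{m+1})\in\mathbb{Z}_{>0}$, while if $a_m=a_{k+1}=0$ then $w_{m+1}=-e$ for an integer $e\ge0$ and either $w_{m+1}=a_m=0$ or $a_m-w_{m+1}=e\in\mathbb{Z}_{>0}$. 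In every case $a_m\succ w_{m+1}$, so $\nu_m<\nu_{m+1}$; stringing these together with the base link $\nu_k=s_k*\lambda<\lambda$ (the hypothesis at $j=k$) gives the chain, hence the lemma.

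The main obstacle I anticipate is the interplay of the two regimes of the $*$-action together with the unusual relation $\succ$: one must verify that the dotted reflections, which can only lower $w_m$ by $1$, never destroy $a_m\succ w_{m+1}$, and in particular handle cleanly both the degenerate clause ``$a=b=0$'' of $\succ$ and the possibly nonintegral coordinates. The recursion $w_m\in\{w_{m+1},w_{m+1}-1\}$ is precisely what keeps $a_{k+1}-w_{m+1}$ a nonnegative integer and makes the transitivity step go through uniformly; everything else is routine.
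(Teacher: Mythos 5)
Your proof is correct and takes essentially the same route as the paper's: both arguments track the single coordinate that migrates leftward under $s_{m+1}\cdots s_k$, observe that each star-reflection either preserves it or lowers it by $1$ (the paper's observation $(s_j*\nu,\vareps_j)\in\{(\nu,\vareps_{j+1}),(\nu,\vareps_{j+1})-1\}$, your recursion $w_m\in\{w_{m+1},w_{m+1}-1\}$, giving $a_{k+1}-w_m\in\mathbb{Z}_{\geq 0}$), and then read off each link of the chain from the criterion~(\ref{eqsucc}) together with transitivity of $\succ$. The only cosmetic difference is in the atypical endpoint case $(\lambda,\vareps_i)=(\lambda,\vareps_{k+1})=0$: the paper handles it by sharpening the bound to a strict inequality and invoking ``$(\nu,\alpha_i)>0$ forces $s_i*\nu<\nu$'', whereas you absorb it directly into the degenerate clause $a=b=0$ of $\succ$ --- both are sound.
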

\begin{proof}
By the assumption, $(\lambda,\vareps_j)\succ (\lambda,\vareps_{j+1})$
for  $j=i,i+1,\ldots,k$.
For each $\nu$ one has $(s_j*\nu,\vareps_j)\in\{(\nu,\vareps_{j+1}), (\nu,\vareps_{j+1})-1\}$.
By induction for $i<k$ we obtain
$$(s_{i+1}\cdots s_k*\lambda,\vareps_{i+1})\leq (\lambda,\vareps_{k+1})$$
with the strict inequality if $(\lambda,\vareps_k)=(\lambda,\vareps_{k+1})=0$.

Since  $(\lambda,\vareps_i)\succ (\lambda,\vareps_{k+1})$ one has  $(\lambda,\vareps_i-\vareps_{k+1})>0$
or  $(\lambda,\vareps_i)=(\lambda,\vareps_{k+1})=0$.
If  $(\lambda,\vareps_i-\vareps_{k+1})>0$, we obtain
$$(s_{i+1}\cdots s_k*\lambda,\vareps_i-\vareps_{i+1})\geq (\lambda,\vareps_i-\vareps_{k+1})>0.$$
If  $(\lambda,\vareps_i)=(\lambda,\vareps_{k+1})=0$, then $(\lambda,\vareps_{k})=0$; by above, this gives
$$(s_{i+1}\cdots s_k*\lambda,\vareps_{i+1})<(\lambda,\vareps_{k+1})$$
so
$$(s_{i+1}\cdots s_k*\lambda,\vareps_i-\vareps_{i+1})>(\lambda,\vareps_i-\vareps_{k+1})\geq 0.$$
Thus in both cases one has
$$(s_{i+1}\cdots s_k*\lambda,\vareps_i-\vareps_{i+1})>0.$$

Note that for an integral weight $\nu$ the inequality  $(\nu,\alpha_i)>0$ forces $s_i*\nu<\nu$.
This establishes the inequality $s_is_{i+1}\cdots s_k*\lambda<s_{i+1}\cdots s_k*\lambda$ as required.
\end{proof}

\subsubsection{}
The following result  was used in the proof of~\Thm{thmq}.

\begin{lem}{eqqq}
Let $\lambda$ be an integral weight satisfying
$$(\lambda,\vareps_1)>(\lambda,\vareps_2)>\ldots >(\lambda,\vareps_{n-1})\succ (\lambda,\vareps_n).$$
Then $(s_1\ldots s_{n-1}*\lambda,\vareps_n)\in\{(\lambda,\vareps_{n-1}),(\lambda,\vareps_{n-1})+1\}$ and
$$(s_1\ldots s_{n-1}*\lambda,\vareps_2)>(s_1\ldots s_{n-1}*\lambda,\vareps_3)>\ldots>
(s_1\ldots s_{n-1}*\lambda,\vareps_n)  .$$
\end{lem}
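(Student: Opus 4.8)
The plan is to work entirely in coordinates. Writing $\lambda=\sum_i a_i\vareps_i$ and recalling from~(\ref{eqsucc}) and the definition of the $*$-action that $s_i*$ affects only the coordinates $a_i,a_{i+1}$: it acts as the transposition $(a_i,a_{i+1})\mapsto(a_{i+1},a_i)$ when $a_i+a_{i+1}\neq 0$, and as $(a_i,a_{i+1})\mapsto(a_{i+1}-1,a_i+1)$ when $a_i+a_{i+1}=0$. Since $s_1\ldots s_{n-1}*\lambda=s_1*(s_2*(\cdots*(s_{n-1}*\lambda)))$, the reflection $s_{n-1}$ is applied first, then $s_{n-2}$, and so on down to $s_1$. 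I shall call the step at $s_i$ \emph{typical} if $a_i+a_{i+1}\neq 0$ and \emph{atypical} otherwise.

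First I would dispose of the statement about the last coordinate. For every $k\le n-2$ the operation $s_k*$ touches only coordinates $k$ and $k+1$, both $\le n-1$, so the $n$-th coordinate is never changed after the very first step $s_{n-1}*\lambda$. That step sends $(a_{n-1},a_n)$ to $(a_n,a_{n-1})$, or, in the atypical case $a_{n-1}+a_n=0$, to $(a_n-1,a_{n-1}+1)$; in either case the resulting $n$-th coordinate is $a_{n-1}$ or $a_{n-1}+1$. This already gives the first assertion, and it is worth recording the sharper fact, which drives the induction, that the $n$-th coordinate equals $a_{n-1}$ precisely when $a_{n-1}+a_n\neq 0$ and $a_{n-1}+1$ precisely when $a_{n-1}+a_n=0$.

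For the chain of strict inequalities I would induct on $n$, the case $n=2$ being the direct computation above. Put $\mu:=s_{n-1}*\lambda$ and let $\mu'$ be the $\gq(n-1)$-weight formed by the first $n-1$ coordinates of $\mu$, namely $\mu'=(a_1,\dots,a_{n-2},m)$ with $m=a_n$ in the typical case and $m=-a_{n-1}-1$ in the atypical case. Because each $s_k*$ with $k\le n-2$ depends only on coordinates that remain $\le n-1$, the first $n-1$ coordinates of $s_1\ldots s_{n-1}*\lambda$ coincide with those of $s_1\ldots s_{n-2}*\mu'$. I would then check that $\mu'$ meets the hypotheses of the lemma for $\gq(n-1)$: the chain $a_1>\cdots>a_{n-2}$ is inherited verbatim, and a short case check using $a_{n-2}>a_{n-1}\succ a_n$ shows $a_{n-2}\succ m$ and that all coordinate differences of $\mu'$ are integers, so $\mu'$ is integral. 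The induction hypothesis then yields that the coordinates $2,\dots,n-1$ of the answer are strictly decreasing and, by the refinement above applied at level $n-1$, that the $(n-1)$-st coordinate equals $w:=a_{n-2}$ when $a_{n-2}+m\neq 0$ and $w:=a_{n-2}+1$ when $a_{n-2}+m=0$.

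The only inequality not delivered by the induction is the junction $w>c_n$ between the $(n-1)$-st coordinate $w$ and the $n$-th coordinate $c_n$, and this is the main obstacle. In the typical case $c_n=a_{n-1}$ and $w\ge a_{n-2}>a_{n-1}=c_n$ settles it at once. In the atypical case $a_{n-1}+a_n=0$ we have $c_n=a_{n-1}+1$, and the crude bound $w\ge a_{n-2}\ge a_{n-1}+1$ only gives $w\ge c_n$. The resolution is exactly the refinement: $w$ is bumped to $a_{n-2}+1$ precisely when $a_{n-2}+m=0$, i.e.\ (since $m=-a_{n-1}-1$) precisely when $a_{n-2}=a_{n-1}+1$, in which case $w=a_{n-1}+2>c_n$; and when $a_{n-2}\neq a_{n-1}+1$ integrality forces $a_{n-2}\ge a_{n-1}+2$, whence $w\ge a_{n-2}>c_n$. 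Either way $w>c_n$, completing the induction. I expect the bookkeeping of this atypical junction, together with the verification that the induction hypothesis applies to $\mu'$, to be the only genuinely delicate points; everything else is a routine transcription of the $*$-action into coordinates.
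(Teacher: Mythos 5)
Your proof is correct and takes essentially the same route as the paper's: induction on $n$, peeling off the first reflection $s_{n-1}*$ to reduce to the truncated $(n-1)$-coordinate weight $(a_1,\ldots,a_{n-2},m)$ with $m\in\{a_n,a_n-1\}$, verifying the hypotheses $a_{n-2}\succ m$, and resolving the delicate junction between the last two coordinates via the atypicality criterion $a_{n-2}+m=0$. The only cosmetic difference is that the paper derives the junction inequality $b_{n-1}>b_n$ by contradiction, whereas you carry the explicit refinement of exactly when the $+1$ bump occurs and argue directly.
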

\begin{proof}
The proof is by induction on $n$.
For $n=2$ we have $b_2\in\{a_1,a_1+1\}$ as required.
Suppose the claim holds for $n-1$.
Write
$$\lambda=:\sum_{i=1}^{n} a_i\vareps_i=(a_1,\ldots,a_{n}).$$

By the assumption,
$$a_1>a_2>\ldots >a_{n-1}\succ a_{n}.$$
Write
$$(b_1,\ldots,b_{n}):=s_1\ldots s_{n-1}*(a_1,\ldots,a_{n})$$
and notice that
$$(b_1,\ldots,b_{n-1})=s_1\ldots s_{n-2}*(a_1,\ldots,a_{n-2},b),$$
where $(b,b_{n})=s*(a_{n-1},a_{n})$ (we view $(a_{n-1},a_{n})$ as a weight for $\gq(2)$
and $s$ is the generator of $\widetilde{W}$ for  $\gq(2)$).
Since $b\in\{a_n,a_n-1\}$ one has
$$a_1>a_2>\ldots >a_{n-2}>b.$$
Thus, by induction hypothesis,
$$b_2>b_3>\ldots>b_{n-1},\ \ b_{n-1}\in\{a_{n-2},a_{n-2}+1\}.$$
Clearly, $b_n\in \{a_{n-1},a_{n-1}+1\}$. It remains to verify that $b_{n-1}>b_n$.
Assume that $b_{n-1}\leq b_n$.
Since $a_{n-1}>a_{n-2}$, this implies $b_n=a_{n-1}+1=a_{n-2}=b_{n-1}$ and $a_{n-1}+a_n=0$.
Then $(a_{n-2},a_{n-1}, a_n)=(a_{n-1}+1,a_{n-1},-a_{n-1})$
and thus $(b_{n-1},b_n)=(a_{n-1}+2,a_{n-1}+1)$, a contradiction.
 \end{proof}

\end{document}